\newtheorem{theorem}{Theorem}[section]
\newtheorem{corollary}{Corollary}[theorem]
\newtheorem{lemma}[theorem]{Lemma}
\newtheorem{proposition}[theorem]{Proposition}
\newtheorem{remark}[theorem]{Remark}
\newtheorem{definition}[theorem]{Definition}
\begin{document}
\title{Optimal bounds for Neumann eigenvalues in terms of the diameter}
\author[]{Antoine Henrot, Marco Michetti}

\address[Antoine Henrot]{Universit\'e de Lorraine, CNRS, IECL, F-54000 Nancy, France}
\email{antoine.henrot@univ-lorraine.fr}
\address[Marco Michetti]{
Universit\'e Paris-Saclay, CNRS, Laboratoire de Math\'ematiques d'Orsay, F-91450 Orsay France}
\email{marco.michetti@universite-paris-saclay.fr}

\date{\today}
\begin{abstract}
    In this paper, we obtain optimal upper bounds for all the Neumann eigenvalues in two situations (that are closely related).
    First we consider a one-dimensional Sturm-Liouville eigenvalue problem where the density is a function $h(x)$ whose
    some power is concave. We prove existence of a maximizer for $\mu_k(h)$ and we completely characterize it.
    Then we consider the Neumann eigenvalues (for the Laplacian) of a domain $\Omega\subset
    \mathbb{R}^d$ of given  diameter and we assume that its profile function (defined as the $d-1$ dimensional measure
    of the slices orthogonal to a diameter) has also some power that is concave.  This includes the case of convex domains
    in $\mathbb{R}^d$, containing and generalizing previous results by P.  Kr\" oger. On the other hand, in the last section,
    we give examples of domains for which the upper bound fails to be true, showing that, in general, 
    $\sup D^2(\Omega)\mu_k(\Omega)= +\infty$.
\end{abstract}

\maketitle

{\it Keywords: Neumann eigenvalues, diameter constraint, sharp bounds, Sturm-Liouville problem}
\tableofcontents

\section{Introduction}

Let $\Omega\subset \mathbb{R}^d$ be an open, bounded and connected Lipschitz set
(or more generally a domain for which the embedding $H^1(\Omega) \hookrightarrow L^2(\Omega)$
is compact), the Neumann eigenvalue problem on $\Omega$ consists in solving the eigenvalue problem 
\begin{equation*}
\begin{cases}
     -\Delta u=\mu u\qquad  &\Omega \\
      \partial_{\nu} u=0 \qquad &\partial\Omega
\end{cases}
\end{equation*}
where $ \partial_{\nu} u$ denotes the normal derivative of $u$, this PDE has to be understood in the usual weak sense.
As the Sobolev embedding $H^1(\Omega) \rightarrow L^2(\Omega)$ is compact here, the spectrum of the Neumann problem is discrete and the eigenvalues (counted with their multiplicities) go to infinity
$$0= \mu_0(\Omega)< \mu_1(\Omega)\le  \mu_2 (\Omega) \le \cdots \rightarrow +\infty.$$
We also have a variational characterization of the Neumann eigenvalues, for $k\geq 1$:
\begin{equation}\label{eVFN}
\mu_k(\Omega)=\min_{E_k} \max_{0\neq u\in E_k} \frac{\int_{\Omega}|\nabla u|^2dx}{\int_{ \Omega}u^2 dx},
\end{equation}
where the minimum is taken over all $k-$dimensional subspaces of the Sobolev space $H^1(\Omega)$ which are $L^2-$orthogonal to constants on $\Omega$.

A central question in spectral geometry is to find upper bounds for the Neumann eigenvalues involving geometric quantities related to the domain. The best result that one can achieve in this direction is to prove an explicit and sharp upper bound for the eigenvalues and to give a characterization of the domains for which the equality is achieved.\\
One of the first result in this direction is the Szeg\"o-Weinberger inequality:
\begin{equation*}
|\Omega|^{\frac{2}{d}}\mu_1(\Omega)\leq |B|^{\frac{2}{d}}\mu_1(B),
\end{equation*}
where $B$ is a ball. This inequality was first proved by G. Szeg\"o in \cite{Szeg54} for planar, simply connected and Lipschitz sets and then H. F. Weinberger in \cite{W56} removed the dimensional and topological constraints. An explicit and sharp upper bound involving the volume of the domain is also known for the second Neumann eigenvalue, this was proved in $2$ dimensions for simply connected domains by A. Girouard, N. Nadirashvili and I. Polterovich in \cite{GNP09} and in all dimensions without any restriction on the topology
by D. Bucur and the first author in \cite{BH19}. The inequality reads as follows:
\begin{equation*}
|\Omega|^{\frac{2}{d}}\mu_2(\Omega)\leq |\Omega^*|^{\frac{2}{d}}\mu_2(\Omega^*),
\end{equation*}
where $\Omega^*$ is the union of two disjoint equal balls. The existence of a maximizer for the quantity $|\Omega|^{\frac{2}{d}}\mu_k(\Omega)$ with $k\geq 3$ is not known in general, this problem is studied in \cite{BMO22}.

Another related question is to find inequalities for the Neumann eigenvalues involving the perimeter of the set. Recently the first author, A. Lemenant and I. Lucardesi in \cite{HLL21} proved that for every $\Omega$ plane convex domain with two axis of symmetry the following inequality is true:
\begin{equation*}
P(\Omega)^2\mu_1(\Omega)\leq 16\pi^2,
\end{equation*}   
with equality for a square and an equilateral triangle. It is conjectured that this inequality holds true for any plane convex domains.
In higher dimensions, nothing seems to be known for the analogous quantity $P^{\frac{2}{d-1}}(\Omega) \mu_1(\Omega)$.
Note that, without convexity constraint, we have $\sup P^{\frac{2}{d-1}}(\Omega) \mu_1(\Omega) = +\infty$ in any dimension.

In this work we study the maximization problem for Neumann eigenvalue under diameter constraint.  
We denote by $D(\Omega)$ the diameter of $\Omega$. Results in this direction is given by S. Y. Cheng in \cite{Cheng75}, where he gives general  upper bounds involving the diameter for smooth and complete Riemannian manifolds. The given bound is sharp for $\mu_1$ but not for the other eigenvalues. Later, this sharp and explicit upper bound for the first eigenvalue has been
generalized by R. Banuelos and K. Burdzy in \cite{BB99} to (slightly) more general domains than convex ones. Finally P. Kr\"oger \cite{K99} prove sharp upper bounds for convex domains in all dimensions. In this work we prove sharp upper bounds for a more general class of domains. In order to describe the main result we first need the following definitions
\begin{definition}
let $h$ be a non negative bounded function, then we say that $h$ is $\beta-$concave if $\beta>0$ is the largest number for 
which $h^{\beta}$ is concave (note that if $h^\beta$ is concave for some $\beta >0$, then $h^{\beta_1}$ is concave for any
$0\leq \beta_1 \leq \beta$).
\end{definition}
In the sequel, we will systematically put the first coordinate $x_1$ along one diameter of our domain $\Omega$.
\begin{definition}\label{dprofile}
Let $\Omega\subset \mathbb{R}^d$ be a domain, the profile function $g$ associated to $\Omega$ is the function defined in the following way:
\begin{equation*}
g(x_1)=\mathcal{H}^{d-1}(\{x'\in \mathbb{R}^{d-1}\;|\; (x_1,x')\in \Omega,\; x_1\in [0,D(\Omega)] \}).
\end{equation*}
\end{definition}
We now define two classes of domains. Let us remark that many domains share the same profile function.
Moreover, even if this profile function $g$ is concave, many of these domains may not be convex (see Figure 1).
\begin{definition}
Let $\mathcal{D}_1^{d}$ be the class defined as ($g_\Omega$ denotes the profile function of $\Omega$):
\begin{equation}
\mathcal{D}_1^d:=\{\Omega \subset \mathbb{R}^d, D(\Omega)=1 \;\mbox{and} \; g_\Omega \;\mbox{is concave }.\}
\end{equation}
More generally, we define
\begin{equation}
\mathcal{D}_\alpha^d:=\{\Omega \subset \mathbb{R}^d, D(\Omega)=1 \;\mbox{and} \; g_\Omega \;\mbox{is $1/\alpha$-concave }.\}
\end{equation}
\end{definition}
\begin{figure}[h!]\label{fig1}
\includegraphics[scale=0.18]{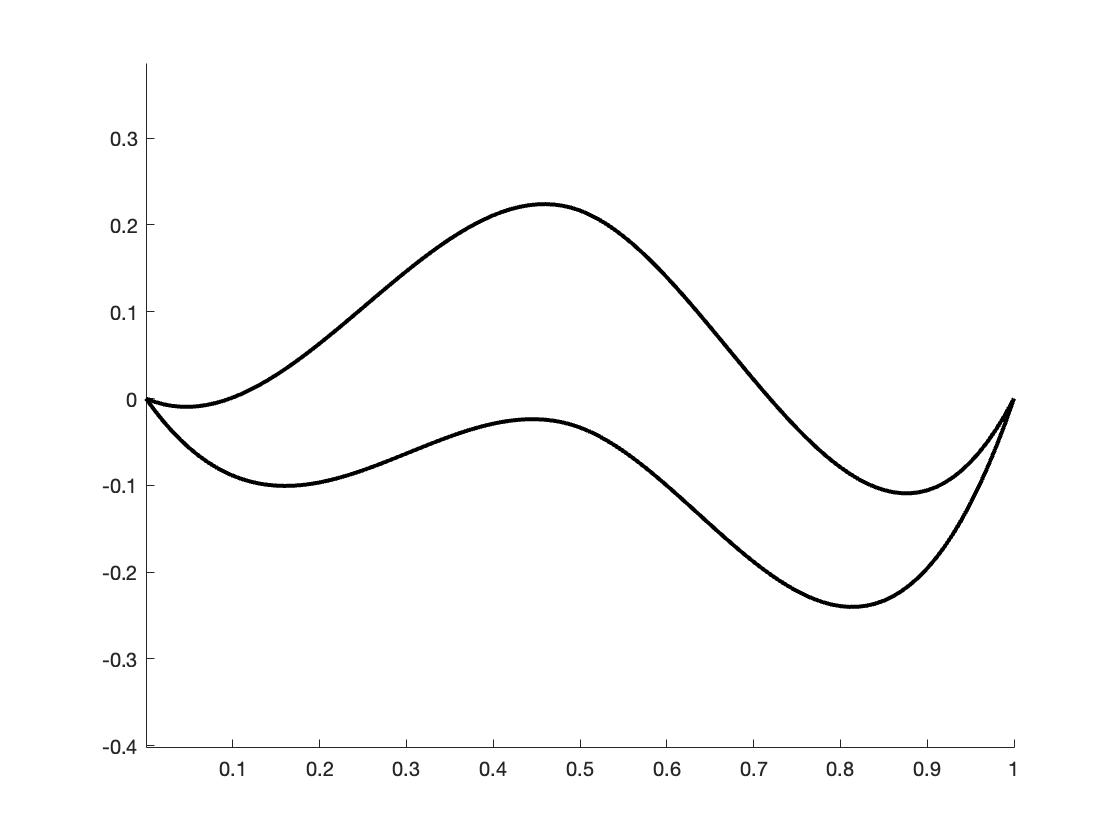}
\includegraphics[scale=0.18]{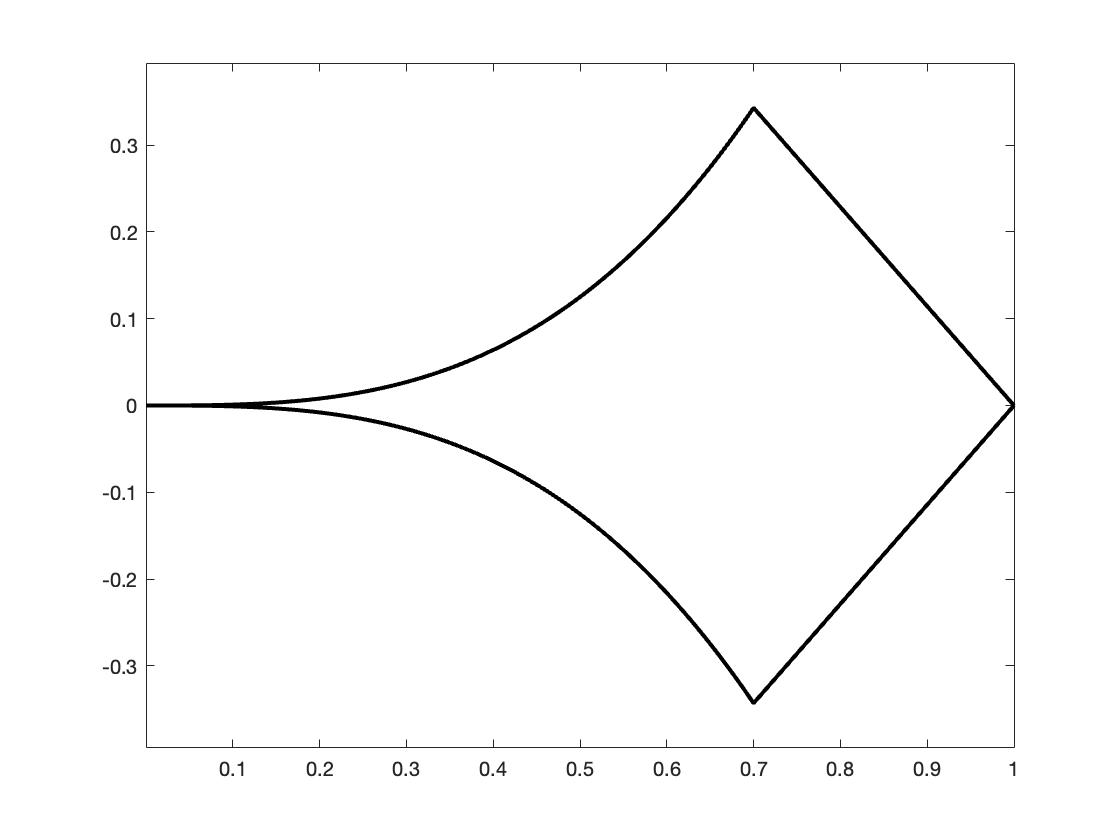}
\caption{Left: a plane domain in $\mathcal{D}_1^{2}$ (its profile function is $x(1-x)$), 
Right: a plane domain in $\mathcal{D}_3^{2}$ (its profile function is $x^3$ for $0\leq x\leq 0.7$
and affine for $x>0.7$ }
\end{figure}
Our main result is the following theorem:
\begin{theorem}\label{tmain}
Let $\Omega$ be a domain, $\Omega\in \mathcal{D}_\alpha^d$ ,(i.e. its profile function $g$ is $\frac{1}{\alpha}$-concave) with $\alpha\geq 1$, then the following bounds hold:
\begin{outline}
\1 if $\alpha<2$ then: $D(\Omega)^2\mu_k(\Omega)\leq (2j_{\frac{\alpha -1}{2}}+(k-1)\pi)^2$ 

\1 if $\alpha=2$ then: $D(\Omega)^2\mu_k(\Omega)\leq ((k+1)\pi)^2$
\1 if $\alpha>2$ then:
\2 if $k$ is odd then $D(\Omega)^2\mu_k(\Omega)\leq 4j_{\frac{\alpha-1}{2},\frac{k+1}{2}}^2$ 
\2 if $k$ is even then $D(\Omega)^2\mu_k(\Omega)\leq (j_{\frac{\alpha-1}{2},\frac{k}{2}}+j_{\frac{\alpha-1}{2},\frac{k+2}{2}})^2$ 
\end{outline}
where $j_{\nu,m}$ is the $m-$th zero of the Bessel function $J_{\nu}$. Moreover all the inequality above are optimal in the sense that they are saturated by a particular sequence of collapsing domains. 
\end{theorem}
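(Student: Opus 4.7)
The plan is a soft reduction to a one-dimensional weighted Sturm--Liouville problem with weight equal to the profile function $g$, followed by invocation of the maximization theorem for such problems established earlier in the paper, and finally a collapsing-domain construction showing sharpness. Normalize $D(\Omega)=1$ and place a diameter along the $x_1$-axis. For $v\in H^1(0,1)$ the lift $u(x_1,x'):=v(x_1)$ lies in $H^1(\Omega)$, and Fubini gives
\begin{equation*}
\int_\Omega u^2\,dx=\int_0^1 v(x_1)^2 g(x_1)\,dx_1,\qquad \int_\Omega|\nabla u|^2\,dx=\int_0^1 v'(x_1)^2 g(x_1)\,dx_1,
\end{equation*}
and the zero-mean condition on $u$ becomes $\int_0^1 v\,g\,dx_1=0$. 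Inserting into~\eqref{eVFN} a $k$-dimensional subspace of $H^1(0,1)$ that is $L^2(g\,dx_1)$-orthogonal to constants yields
\begin{equation*}
\mu_k(\Omega)\;\le\;\mu_k^{SL}(g),
\end{equation*}
where $\mu_k^{SL}(g)$ denotes the $k$-th eigenvalue of the Sturm--Liouville operator $v\mapsto -\tfrac{1}{g}(gv')'$ on $(0,1)$ with the natural Neumann or degenerate boundary conditions.

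Since $g$ is $1/\alpha$-concave by hypothesis, I would then apply the one-dimensional maximization result established in the first part of the paper to bound $\mu_k^{SL}(g)$ by $\sup \mu_k^{SL}(h)$ over the class of $1/\alpha$-concave densities $h$ on $[0,1]$. The extremal weights identified there are explicit piecewise objects built from the two pure powers $x^\alpha$ and $(1-x)^\alpha$, for which the Sturm--Liouville equation reduces, after the substitution $t=\sqrt{\mu}\,x$, to a Bessel equation of order $\nu=(\alpha-1)/2$. The trichotomy $\alpha<2$, $\alpha=2$, $\alpha>2$ and the parity split for $k$ in the last case simply record the three possible shapes of the extremizer: a cap--flat--cap profile producing the $2j_\nu+(k-1)\pi$ bound; a smooth matching regime at $\alpha=2$ (consistent with $2j_{1/2,1}=2\pi$); and endpoint-concentrated profiles for $\alpha>2$ where odd and even $k$ correspond to symmetric and asymmetric optimal eigenfunctions and so to the two Bessel expressions in the statement.

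For sharpness, I would exhibit a collapsing sequence $\Omega_\epsilon\subset\mathbb{R}^d$ whose profile is proportional to the optimal weight $h^*$, for instance
\begin{equation*}
\Omega_\epsilon:=\bigl\{(x_1,x')\in[0,1]\times\mathbb{R}^{d-1}:\ |x'|\le\epsilon\,h^*(x_1)^{1/(d-1)}\bigr\},
\end{equation*}
whose $(d-1)$-dimensional slice at $x_1$ has measure a fixed multiple of $\epsilon^{d-1}h^*(x_1)$, whose diameter tends to $1$, and which remains in $\mathcal{D}_\alpha^d$ because $h^*$ is. Standard thin-tube spectral asymptotics then give $\mu_k(\Omega_\epsilon)\to \mu_k^{SL}(h^*)$, saturating the inequality.

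The reduction to 1D used here is essentially soft, so the main obstacle sits in the preliminary one-dimensional optimization: identifying the extremal $1/\alpha$-concave weight and proving its optimality requires a careful concavity-preserving perturbation and shape-derivative analysis combined with monotonicity properties of Bessel zeros, and forms the core of the paper. A secondary but non-trivial point is the justification of the thin-tube convergence of Neumann eigenvalues when $h^*$ vanishes at the endpoints of $[0,1]$, which is typically handled through a preliminary regularization of $h^*$.
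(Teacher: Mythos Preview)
Your proposal is correct and follows the same route as the paper: the reduction via $x_1$-only test functions is Lemma~\ref{lUBMU}, the one-dimensional maximization you invoke is Theorem~\ref{thmax}, and the collapsing-sequence sharpness is Lemma~\ref{lLBMU} (proved there by a direct compactness argument on rescaled eigenfunctions rather than by regularizing $h^*$). One minor descriptive slip: for $\alpha>2$ the extremal weight is a triangular tent peaking in the interior (symmetric for odd $k$, asymmetric for even $k$), not an ``endpoint-concentrated'' profile.
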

The precise description of the collapsing sequence is given in Section \ref{section5}. 

We apply now the previous theorem to the class of convex domains.
Indeed, when $\Omega$ is convex in $\mathbb{R}^d$, a simple application of Brunn-Minkowski inequality shows
that its profile function $g$ is $(d-1)^{-1}-$concave. Therefore, the above result give a new proof of optimal upper bounds for the quantity $D(\Omega)^2\mu_k(\Omega)$ when $\Omega$ is convex. These upper bounds in the convex case reads as follows: 
\begin{corollary}[P. Kr\"oger \cite{K99}]
Let $\Omega\subset \mathbb{R}^d$ be a convex domain, then the following bounds hold:
\begin{outline}
\1 if $d=2$ then $D(\Omega)^2\mu_k(\Omega)\leq (2j_{0,1}+(k-1)\pi)^2$ 

\1 if $d=3$ then: $D(\Omega)^2\mu_k(\Omega)\leq ((k+1)\pi)^2$
\1 if $d\geq 4$ then:
\2 if $k$ is odd then $D(\Omega)^2\mu_k(\Omega)\leq 4j_{\frac{d-2}{2},\frac{k+1}{2}}^2$ 
\2 if $k$ is even then $D(\Omega)^2\mu_k(\Omega)\leq (j_{\frac{d-2}{2},\frac{k}{2}}+j_{\frac{d-2}{2},\frac{k+2}{2}})^2$ 
\end{outline}
where $j_{\nu,m}$ is the $m-$th zero of the Bessel function $J_{\nu}$. Moreover all the inequality above are optimal in the sense that they are saturated by a particular sequence of collapsing domains. 
\end{corollary}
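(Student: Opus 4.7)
The plan is to reduce the corollary to Theorem \ref{tmain} by showing that every convex body in $\mathbb{R}^d$ belongs to the class $\mathcal{D}_{d-1}^d$ (up to rescaling the diameter). The key ingredient is the Brunn--Minkowski inequality. If $\Omega \subset \mathbb{R}^d$ is convex and we put the axis $x_1$ along one of its diameters, then each slice $\Omega_t = \{x' \in \mathbb{R}^{d-1} : (t,x') \in \Omega\}$ is a $(d-1)$-dimensional convex set, and for any $t_0, t_1 \in [0,D(\Omega)]$ and $\lambda \in [0,1]$, convexity gives
\begin{equation*}
\Omega_{(1-\lambda)t_0 + \lambda t_1} \supseteq (1-\lambda)\Omega_{t_0} + \lambda \Omega_{t_1}.
\end{equation*}
Brunn--Minkowski applied to the right-hand side yields
\begin{equation*}
g(\,(1-\lambda)t_0 + \lambda t_1\,)^{1/(d-1)} \geq (1-\lambda)\, g(t_0)^{1/(d-1)} + \lambda\, g(t_1)^{1/(d-1)},
\end{equation*}
so $g$ is $\frac{1}{d-1}$-concave. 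After rescaling to diameter $1$, this places $\Omega$ in $\mathcal{D}_{d-1}^d$.

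Next I would invoke Theorem \ref{tmain} with $\alpha = d-1$. The three regimes of the theorem correspond exactly to the three regimes of the corollary: $\alpha < 2$ means $d=2$; $\alpha = 2$ means $d=3$; $\alpha > 2$ means $d \geq 4$. The Bessel index transforms as $\frac{\alpha-1}{2} = \frac{d-2}{2}$, recovering the values $j_{0,1}$ for $d=2$ and $j_{(d-2)/2,m}$ in general, which matches the statement of the corollary verbatim.

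For the optimality claim, I would revisit the collapsing sequence from Section \ref{section5}: it consists of domains of diameter $1$ whose profile function approaches an extremal $\frac{1}{d-1}$-concave profile. One must check that this sequence can be realized \emph{within the class of convex domains}. A natural candidate is a thin axisymmetric convex body whose profile function coincides with the extremal one; for instance, the convex body of revolution $\{(x_1,x') : |x'| \leq r(x_1)\}$ with $r(x_1) = c\, g_{\mathrm{ext}}(x_1)^{1/(d-1)}$ is convex precisely because $r$ is concave (by the Brunn--Minkowski computation read in reverse), so this is automatic. The thin rescaling in the transverse direction completes the argument.

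The main obstacle is essentially bookkeeping: verifying that the extremal profile produced by Theorem \ref{tmain} is genuinely attainable by a sequence of convex sets rather than merely by sets with a $\frac{1}{d-1}$-concave profile. Since we have flexibility to take axisymmetric bodies of revolution, and since concavity of the radius function ensures convexity of such a body, this reduction goes through cleanly, and no further work is required beyond invoking Theorem \ref{tmain}.
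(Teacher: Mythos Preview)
Your proposal is correct and follows the same route as the paper: the corollary is deduced from Theorem~\ref{tmain} by observing, via Brunn--Minkowski, that the profile function of a convex body in $\mathbb{R}^d$ is $\frac{1}{d-1}$-concave, so that $\alpha=d-1$ applies. Your additional remark that the extremal collapsing sequence can be realized by convex bodies of revolution (since the radius $r=g_{\mathrm{ext}}^{1/(d-1)}$ is concave) is a worthwhile detail that the paper does not spell out.
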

Another thing that we want to point out is that  the geometric property of $\beta-$concavity of the profile function is crucial in order to have boundedness of the quantity $D(\Omega)^2\mu_k(\Omega)$. Indeed as soon as the profile function is $\beta-$concave then we have an optimal upper bound for $D(\Omega)^2\mu_k(\Omega)$, but this optimal bounds tends to infinity when $\beta$ goes to zero. This will be emphasized in Section \ref{section6} where we give another significant example of a sequence of domains for which 
$D(\Omega)^2\mu_k(\Omega) \to +\infty$.

A crucial role in proving the above results is played by the following relaxed version of Sturm-Liouville eigenvalues:
\begin{definition}[Relaxed Sturm-Liouville eigenvalues]\label{dMUk} 
Let $h\in L^{\infty}(0,1)$ be a non negative function, then we define the following quantity
\begin{equation}\label{vfslh}
\mu_k(h)=\inf_{E_k} \sup_{0\neq u\in E_k } \frac{\int_0^1(u')^2hdx}{\int_0^1u^2hdx},
\end{equation}
where the infimum is taken over all $k$-dimensional subspaces of the Sobolev space $H^1([0,1])$ which are  $L^2$-orthogonal to $h$ on $[0,1]$.
\end{definition} 
Note that $\mu_k(h)$ is scale invariant: for any $t>0$, $\mu_k(th)=\mu_k(h)$. As a consequence, we can if needed, fix the $L^\infty$
norm of the function $h$.

If the infimum above is achieved we have existence of eigenfunctions and the relaxed eigenvalues are actually classical eigenvalues of the following Sturm-Liouville problem
\begin{equation}\label{vFSLB}
\begin{cases}
\vspace{0.3cm}
   -\frac{d}{dx}\big(h(x)\frac{du}{dx}(x)\big)=\mu(h) h(x)u(x)  \qquad  x\in \big(0,1\big) \\
      h(0)\frac{du}{dx}(0)=h(1)\frac{du}{dx}(1)=0.
\end{cases}
\end{equation}

It is not trivial to know when the eigenfunctions exists for a relaxed Sturm-Liouville eigenvalues and also to know continuity properties of the relaxed Sturm-Liouville eigenvalues with respect to a sequence of functions $h_n$.
The existence of such eigenfunctions and the continuity of relaxed Sturm-Liouville eigenvalues are implicitly assumed in \cite{K99}.

We now explain heuristically why  the quantity $D(\Omega)^2\mu_k(\Omega)$ and the Sturm-Liouville eigenvalues are related. 
Suppose that $\Omega\subset \mathbb{R}^d$ is a domain with $D(\Omega)=1$ with profile function $h$ then, choosing as test functions  
in \eqref{eVFN}, functions $v$ only depending on the first variable $x_1$, we come out with the definition \eqref{vfslh}, and therefore
 we have that $\mu_k(\Omega)\leq \mu_k(h)$.
 Conversely,  assume that $h$ is a positive bounded function, let $\Omega_{\epsilon}$ be a sequence of domains with profile function 
 equal to $\epsilon^{d-1} h$ then we will prove in Section \ref{section2} that $\mu_k(\Omega_{\epsilon})\rightarrow \mu_k(h)$. 
Thanks to these relations we see that proving Theorem \ref{tmain} is equivalent to solve the following maximization problem:
\begin{equation*}
\sup \{\mu_k(h^{\alpha}), h:[0,1] \to [0,1], h \mbox{ concave }, \max h=1\}.
\end{equation*}
In order to give a more clear presentation of the results, we solve this problem in two steps: in Section \ref{section3} we solve the problem for $\alpha=1$ and then in Section \ref{section4} we generalize the results for $\alpha\geq 1$. 

Indeed in the case $\alpha=1$ we can use the fact that the relaxed Sturm-Liouville eigenvalues always admit an associated eigenfunction. Thanks to this fact we can write the first order optimality conditions and then prove qualitative properties of the maximizer. Showing in this way the techniques used in the proofs without being distrusted by the issue of the existence of eigenfunctions. In Section \ref{section3} we 
 use, in particular, the abstract formulation for optimality conditions already used in \cite{LaNo}, but let us note that in our case a 
 precise analysis  of the first order optimality conditions is sufficient to get the thesis (that the graph of the optimizer is polygonal).

The case $\alpha>1$ is drastically different (as we explain at the beginning of Section \ref{section4}) and in this case the existence of Sturm-Liouville eigenfunctions cannot be given for granted. For this reason in this case we first construct a maximizing sequence $h_{\epsilon}$ for which we can prove that there exists eigenfunctions associated to $\mu_k(h_{\epsilon})$. Then we study the behaviour of this maximizing sequence using the results we already proved in Section \ref{section3}    .

In Section \ref{section6} we present some examples of sequences of domains for which the quantity $D(\Omega)^2\mu_k(\Omega)$ is unbounded, another example can also be found in \cite{HKP16}. This explains why the class of domain $\Omega\subset \mathbb{R}^d$ 
for which the profile function has a $\beta-$concavity property is an optimal class of domains to obtain upper bounds for $D(\Omega)^2\mu_k(\Omega)$.

\section{Links between $\mu_k(\Omega)$ and $\mu_k(h)$}\label{section2}
In this section we study the links between the Neumann  eigenvalues and the relaxed Strum-Liouville defined in Definition \ref{dMUk}.

We prove the following bound for the supremum of the eigenvalue $\mu_1(h)$ in a particular class of functions $h$.
\begin{lemma}\label{lUBMUH}
For every $K>0$ and $A>0$, $K\leq A$, there exists a constant $C_k(K,A)$ such that the following holds 
\begin{equation*}
\sup \{\mu_k(h)\;|\;0\leq h(x) \leq A \text{ and }||h||_{L^1(0,1)}\geq K\}\leq C_k(K,A)
\end{equation*}
\end{lemma}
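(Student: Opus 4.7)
The plan is to invoke the min-max characterization \eqref{vfslh} and exhibit a $k$-dimensional subspace $E_k\subset H^1([0,1])$, $L^2$-orthogonal to $h$, on which the Rayleigh quotient is controlled by a constant depending only on $K,A,k$. Since $K\le\int_0^1 h\,dx\le A$, I partition $[0,1]$ into $k+1$ consecutive intervals $I_0,\ldots,I_k$ of equal $h$-mass
\[
M:=\frac{1}{k+1}\int_0^1 h\,dx\in\Bigl[\tfrac{K}{k+1},\tfrac{A}{k+1}\Bigr].
\]
The upper bound $h\le A$ forces $|I_j|\ge M/A$ for every $j$.

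On each $I_j=[a_j,a_{j+1}]$ I construct a trapezoidal bump $\varphi_j\in H^1([0,1])$ supported in $I_j$: set $\varphi_j\equiv 1$ on $[a_j+\delta,a_{j+1}-\delta]$ and extend linearly to $0$ on the two boundary strips of width $\delta:=M/(4A)$ (which is at most $|I_j|/4$ by construction). These bumps satisfy $\|\varphi_j'\|_\infty\le 1/\delta$ and have pairwise disjoint supports. The upper bound on $h$ gives
\[
\int_0^1(\varphi_j')^2 h\,dx\le A\cdot(1/\delta)^2\cdot 2\delta=\frac{2A}{\delta}=\frac{8A^2}{M},
\]
while the $h$-mass of the two gradient strips is at most $2A\delta=M/2$, so
\[
\int_0^1\varphi_j^2 h\,dx\ge\int_{\{\varphi_j=1\}}h\,dx\ge M-M/2=M/2.
\]
Each individual Rayleigh quotient is then bounded by $16A^2/M^2\le 16A^2(k+1)^2/K^2$.

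To conclude I let $V=\mathrm{span}(\varphi_0,\ldots,\varphi_k)$, which is $(k+1)$-dimensional by disjointness of supports, and set $E_k:=V\cap\bigl\{u\in H^1([0,1]):\int_0^1 uh\,dx=0\bigr\}$. Since $\int\varphi_j h\,dx>0$ the orthogonality is a non-trivial linear constraint and $\dim E_k=k$. For any $u=\sum_{j=0}^k\alpha_j\varphi_j\in E_k$ the disjoint supports make both integrals split diagonally, so
\[
\frac{\int_0^1(u')^2 h\,dx}{\int_0^1 u^2 h\,dx}=\frac{\sum_j\alpha_j^2\int(\varphi_j')^2 h\,dx}{\sum_j\alpha_j^2\int\varphi_j^2 h\,dx}\le\max_j\frac{\int(\varphi_j')^2 h\,dx}{\int\varphi_j^2 h\,dx}\le\frac{16A^2(k+1)^2}{K^2}.
\]
Inserting this into Definition~\ref{dMUk} yields $\mu_k(h)\le C_k(K,A):=16A^2(k+1)^2/K^2$.

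The only delicate point is the tuning of $\delta$: it has to be small enough that the gradient strips do not devour the $h$-mass of $I_j$ (so that $\int\varphi_j^2 h\gtrsim M$), yet large enough to keep $\int(\varphi_j')^2 h\lesssim A^2/M$. The scaling $\delta\sim M/A$ consumes both hypotheses $h\le A$ and $\int h\ge K$ simultaneously, which is also why a bound of this form must blow up as soon as either $K\to 0$ or $A\to\infty$.
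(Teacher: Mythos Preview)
Your proof is correct and takes a genuinely different route from the paper's. The paper builds its $k$-dimensional test space out of the centered monomials $\phi_i(x)=x^i-\bigl(\int t^ih\bigr)/\bigl(\int h\bigr)$, writes the resulting Rayleigh quotient as an explicit functional $P_k(h)$ in the moments $\int t^ih$, and then argues boundedness of $P_k$ by weak-$*$ compactness in $L^\infty$ together with continuity of moments under weak-$*$ convergence. In particular the paper's constant $C_k(K,A)$ is non-constructive.

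Your argument---partitioning $[0,1]$ into $k+1$ intervals of equal $h$-mass and using trapezoidal bumps with disjoint supports---is more elementary and more informative: it avoids any compactness step and produces the explicit bound $C_k(K,A)=16A^2(k+1)^2/K^2$. It is essentially the classical ``cluster'' argument for Neumann eigenvalues translated to the weighted one-dimensional setting. The paper's approach, on the other hand, has the minor advantage that the test space is universal (it does not depend on $h$ beyond the normalization), which is sometimes convenient when tracking dependence on parameters; but for the purposes of this lemma your construction is arguably cleaner.
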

\begin{proof}
We introduce the following family of polynomials $\phi_1,...,\phi_k$
\begin{equation*}
\phi_i(x)=x^i-\frac{\int_0^1t^ih(t)dt}{\int_0^1h(t)dt}.
\end{equation*}
The following maximization problem:
\begin{equation*}
\sup_{v\in Span[\phi_1,...,\phi_k]}\frac{\int_0^1(v')^2hdx}{\int_0^1v^2hdx}
\end{equation*}
has a solution that we denote by $V=\sum_ia_i\phi_i$, where $a_i\in \mathbb{R}$ are constants. From Definition \ref{dMUk}, using the space $Span[\phi_1,...,\phi_k]$ as a test subspace, we obtain 
\begin{equation}\label{eUBMUH}
\mu_k(h)\leq P_k(h):= \frac{\big [\int_0^1\big (\sum_{i=1}^k ia_i x^{i-1}\big)^2h(x)dx\big]\big[\int_0^1h(x)dx \big]^2}{\int_0^1\big (\sum_{i=1}^k a_i (\int_0^1h(t)dtx^i-\int_0^1t^ih(t)dt)\big)^2h(x)dx}
\end{equation}
in order to conclude the proof we need to prove that the right hand side is bounded.

From the constraint $||h||_{L^{\infty}(0,1)}\leq A$ on the admissible functions, we can extract a maximizing sequence with the following properties 
\begin{align}\label{cCOH}
h_n &\overset{\ast}{\rightharpoonup} \overline{h}\; \text{ in } L^{\infty}(0,1)
\end{align}
and, moreover, from the constraint $||h||_{L^1(0,1)}\geq K$ we can conclude that $||\overline{h}||_{L^1(0,1)}\geq K$. It is straightforward to check that the functional $P_k(h)$ is continuous under the convergence \eqref{cCOH}. From this fact and from \eqref{eUBMUH} we finally obtain
\begin{equation*}
\mu_k(h)\leq P_k(\overline{h})=C_k(K,A)
\end{equation*} 
\end{proof}
In the next two lemmas we explain the relations between the first Neumann eigenvalue and the quantity $\mu_1(h)$. In the first lemma we 
prove that if $\Omega\subset \mathbb{R}^d$ and $h$ is its profile function (see Definition \ref{dprofile}) $D(\Omega)^2\mu_k(\Omega)\leq \mu_k(h)$. In the second lemma we prove that if $\Omega_{\epsilon}$ is a sequence of domains with profile function 
equal to $\epsilon^{d-1} h$ then $D(\Omega_{\epsilon})^2\mu_k(\Omega_{\epsilon})\rightarrow \mu_k(h)$. 
\begin{lemma}\label{lUBMU}
Let $\Omega$ be an open and Lipschitz domain and $h$ its profile function in the sense of Definition \ref{dprofile}, then 
\begin{equation*}
D(\Omega)^2\mu_k(\Omega)\leq \mu_k(h)
\end{equation*}
\end{lemma}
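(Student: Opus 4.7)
By the scale invariance of $\mu_k(h)$ under $h \mapsto th$ and the scaling $\mu_k(\lambda \Omega) = \lambda^{-2}\mu_k(\Omega)$, after replacing $\Omega$ by $\Omega/D(\Omega)$ we may assume $D(\Omega)=1$ and work with the profile function $h\in L^\infty(0,1)$. In this normalization, the bound reduces to $\mu_k(\Omega)\leq \mu_k(h)$.

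The main idea is to produce admissible test subspaces in the variational characterization~\eqref{eVFN} out of admissible test subspaces in~\eqref{vfslh}. Given a $k$-dimensional subspace $E_k\subset H^1(0,1)$ with $\int_0^1 v\, h\, dx=0$ for every $v\in E_k$, I associate to each $v\in E_k$ the extension $\tilde v\in H^1(\Omega)$ defined by $\tilde v(x_1,x'):=v(x_1)$. By Fubini,
\begin{equation*}
\int_\Omega \tilde v\, dx = \int_0^1 v(x_1)\, h(x_1)\, dx_1 = 0,
\end{equation*}
so $\tilde v$ is $L^2(\Omega)$-orthogonal to constants, and similarly
\begin{equation*}
\int_\Omega \tilde v^2\, dx = \int_0^1 v^2\, h\, dx_1, \qquad \int_\Omega |\nabla \tilde v|^2\, dx = \int_0^1 (v')^2\, h\, dx_1.
\end{equation*}

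Next I would verify that the extension map $v\mapsto \tilde v$ is injective, so that $\widetilde E_k:=\{\tilde v:v\in E_k\}$ is a genuinely $k$-dimensional subspace of $H^1(\Omega)$. Since $\Omega$ is open, bounded, connected and Lipschitz, its projection onto the $x_1$-axis is the interval $(0,1)$ and $h>0$ almost everywhere there; hence $\tilde v=0$ in $L^2(\Omega)$ forces $v=0$ a.e.\ on $(0,1)$, and therefore in $H^1(0,1)$. Injectivity follows.

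Since $\widetilde E_k$ is an admissible competitor in~\eqref{eVFN} and the Rayleigh quotients in~\eqref{eVFN} and~\eqref{vfslh} coincide on the corresponding pairs $(\tilde v, v)$, the variational principle gives
\begin{equation*}
\mu_k(\Omega) \;\leq\; \sup_{0\neq \tilde v\in \widetilde E_k}\frac{\int_\Omega |\nabla \tilde v|^2\,dx}{\int_\Omega \tilde v^2\,dx} \;=\; \sup_{0\neq v\in E_k}\frac{\int_0^1 (v')^2 h\,dx}{\int_0^1 v^2 h\,dx}.
\end{equation*}
Taking the infimum over admissible $E_k$ yields $\mu_k(\Omega)\leq \mu_k(h)$, equivalent to the desired inequality after undoing the rescaling. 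The argument is essentially just Fubini combined with the min-max principle; the only mild point of care is the injectivity of the extension map, where connectedness of $\Omega$ is used to ensure the $k$-dimensionality is preserved.
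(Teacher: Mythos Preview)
Your argument is correct and is exactly the approach taken in the paper: after normalizing $D(\Omega)=1$, one tests the variational characterization~\eqref{eVFN} with functions depending only on $x_1$, and Fubini's theorem converts the Rayleigh quotient on $\Omega$ into the one-dimensional quotient in~\eqref{vfslh}. The paper's proof is stated more tersely (it does not spell out the injectivity step), but the substance is identical.
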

\begin{proof}
Without loss of generality we can assume $D(\Omega)=1$. From the variational characterization \eqref{eVFN}, using test functions 
that depend only on $x_1$, we recover the definition \eqref{vfslh} of the relaxed Sturm-Liouville eigenvalue $\mu_k(h)$.
\end{proof}
Now we are interested in the behavior of the Neumann eigenvalues on collapsing domains: 
\begin{lemma}\label{lLBMU}
Let $h\in L^{\infty}(0,1)$ be a
non negative function (not identically zero), let $\Omega_{\epsilon, h}$ be a domain with profile function  equal to $\epsilon^{d-1} h$ then 
\begin{equation*}
\lim_{\epsilon\to 0}D(\Omega_{\epsilon, h})^2\mu_k(\Omega_{\epsilon, h})=\mu_k(h).
\end{equation*}  
\end{lemma}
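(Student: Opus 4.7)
The plan is to combine the already-established upper bound with a matching lower bound obtained by converting Neumann eigenfunctions on $\Omega_{\epsilon,h}$ into admissible 1D test functions for $\mu_k(h)$. Lemma~\ref{lUBMU}, applied to $\Omega_{\epsilon,h}$ with profile $g_\epsilon=\epsilon^{d-1}h$, together with the scale invariance $\mu_k(g_\epsilon)=\mu_k(h)$, gives $D(\Omega_{\epsilon,h})^2\mu_k(\Omega_{\epsilon,h})\le\mu_k(h)$, hence $\limsup_{\epsilon\to 0}D(\Omega_{\epsilon,h})^2\mu_k(\Omega_{\epsilon,h})\le\mu_k(h)$. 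Since the transverse size of $\Omega_{\epsilon,h}$ is $O(\epsilon)$ one moreover has $D(\Omega_{\epsilon,h})\to 1$; thus the content is the matching lower bound $\mu_k(h)\le\liminf_{\epsilon\to 0}\mu_k(\Omega_{\epsilon,h})$.

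For this I would fix a convenient realization $\Omega_\epsilon=\{(x_1,x')\in(0,1)\times\R^{d-1}:|x'|\le r_\epsilon(x_1)\}$ with $\omega_{d-1}r_\epsilon(x_1)^{d-1}=\epsilon^{d-1}h(x_1)$, so that every slice is a Euclidean ball of radius $O(\epsilon)$ on which the sharp Poincaré inequality has constant of order $\epsilon^2$. Let $u_1^\epsilon,\ldots,u_k^\epsilon$ be Neumann eigenfunctions associated with $\mu_1(\Omega_\epsilon),\ldots,\mu_k(\Omega_\epsilon)$, normalized so that $\int_{\Omega_\epsilon}u_i^\epsilon u_j^\epsilon\,dx=\delta_{ij}|\Omega_\epsilon|$ and $\int_{\Omega_\epsilon}u_i^\epsilon\,dx=0$; the eigenvalues $\mu_i(\Omega_\epsilon)$ remain bounded by $\mu_i(h)$ thanks to Lemma~\ref{lUBMU}. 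Setting
\[
\bar u_i^\epsilon(x_1):=\frac{1}{g_\epsilon(x_1)}\int_{\{x':(x_1,x')\in\Omega_\epsilon\}}u_i^\epsilon(x_1,x')\,dx',
\]
the slice Poincaré inequality combined with the Rayleigh identity yields
\[
\|u_i^\epsilon-\bar u_i^\epsilon\|_{L^2(\Omega_\epsilon)}^2\le C\epsilon^2\int_{\Omega_\epsilon}|\nabla_{x'}u_i^\epsilon|^2\,dx\le C\epsilon^2\mu_i(\Omega_\epsilon)|\Omega_\epsilon|=o(|\Omega_\epsilon|).
\]

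Because $\bar u_j^\epsilon$ is constant on each slice one has $\int_{\Omega_\epsilon}u_i^\epsilon\bar u_j^\epsilon\,dx=\int_{\Omega_\epsilon}\bar u_i^\epsilon\bar u_j^\epsilon\,dx$, from which a short expansion shows that the $\bar u_i^\epsilon$ are asymptotically orthonormal in $L^2((0,1),h\,dx_1)$ (up to the factor $\int_0^1h$), and they satisfy exactly $\int_0^1\bar u_i^\epsilon h\,dx_1=\epsilon^{-(d-1)}\int_{\Omega_\epsilon}u_i^\epsilon\,dx=0$. After a Gram--Schmidt correction of size $o(1)$, the subspace $E_k^\epsilon\subset H^1(0,1)$ spanned by the $\bar u_i^\epsilon$ is $k$-dimensional and admissible in the variational characterization \eqref{vfslh} of $\mu_k(h)$. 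For $v=\sum a_i\bar u_i^\epsilon\in E_k^\epsilon$ and the corresponding $u=\sum a_iu_i^\epsilon$, the identities $\int_0^1v^2h\,dx_1=\epsilon^{-(d-1)}\int_{\Omega_\epsilon}v^2\,dx$ and $\int_0^1(v')^2h\,dx_1=\epsilon^{-(d-1)}\int_{\Omega_\epsilon}(\partial_{x_1}v)^2\,dx$ reduce the 1D Rayleigh quotient of $v$ to the Rayleigh quotient of $u$ on $\Omega_\epsilon$, which is bounded by $\mu_k(\Omega_\epsilon)$ on the eigenfunction span; the $L^2$-closeness of $u$ and $v$ closes the comparison with an $o(1)$ error. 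Taking the sup over $E_k^\epsilon$ then gives $\mu_k(h)\le\mu_k(\Omega_\epsilon)+o(1)$, which combined with the upper bound yields the announced limit.

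The main difficulty is controlling the 1D Dirichlet form $\int_0^1((\bar u_i^\epsilon)')^2 h$ in terms of $\int_{\Omega_\epsilon}(\partial_{x_1}u_i^\epsilon)^2$: the slice radius $r_\epsilon(x_1)$ varies with $x_1$, so the Reynolds transport formula for the slice average produces a non-trivial lateral boundary term in $(\bar u_i^\epsilon)'$. Absorbing it requires either exploiting the Neumann boundary condition $\partial_\nu u_i^\epsilon=0$ on $\partial\Omega_\epsilon$ or a Jensen estimate after rescaling each slice to the unit ball. A possible shortcut is to first prove the lemma for $h$ smooth and bounded away from zero, and then extend to general $h\in L^\infty(0,1)$ by density using continuity properties of $\mu_k(\cdot)$ of the kind exploited in Lemma~\ref{lUBMUH}.
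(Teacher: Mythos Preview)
Your upper bound via Lemma~\ref{lUBMU} and scale invariance matches the paper exactly. For the lower bound, however, the paper takes a different and cleaner route that sidesteps precisely the difficulty you flag in your last paragraph.

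Rather than averaging $u_i^\epsilon$ over slices of the \emph{collapsing} domain $\Omega_\epsilon$, the paper rescales the transverse variables to land on a \emph{fixed} domain $\Omega_h$ with profile function $h$: it sets $\bar u_{i,\epsilon}(x_1,x')=\epsilon^{(d-1)/2}u_i^\epsilon(x_1,\epsilon x')$ for $(x_1,x')\in\Omega_h$. A change of variables gives $\|\bar u_{i,\epsilon}\|_{L^2(\Omega_h)}=1$ and $\|\nabla\bar u_{i,\epsilon}\|_{L^2(\Omega_h)}^2\le\mu_i(\Omega_\epsilon)\le\mu_i(h)$, so the sequence is bounded in $H^1(\Omega_h)$ and one extracts weak-$H^1$/strong-$L^2$ limits $\bar U_i$. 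The same change of variables gives $\int_{\Omega_h}|\partial_{x_j}\bar u_{i,\epsilon}|^2\le\epsilon^2\mu_i(h)$ for $j\ge 2$, so each $\bar U_i$ depends only on $x_1$; restricting to $x_1$ yields functions $U_i\in H^1(0,1)$ orthogonal to $h$ in $L^2$, and weak lower semicontinuity of the Dirichlet integral on $\Omega_h$ furnishes the comparison $\mu_k(\Omega_\epsilon)\ge\mu_k(h)+o(1)$ without ever differentiating a slice average.

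Your slice-averaging strategy is not wrong in spirit, but the step ``the $L^2$-closeness of $u$ and $v$ closes the comparison'' is a genuine gap: $L^2$-closeness gives no control on $\int_{\Omega_\epsilon}|\nabla v|^2$ versus $\int_{\Omega_\epsilon}|\nabla u|^2$, and for non-constant $h$ the averaging projection does \emph{not} decrease the Dirichlet form, exactly because of the Reynolds boundary term you identify. Neither the Neumann condition nor a slice-by-slice rescaling disposes of this term without essentially reproducing the paper's global rescaling. Your proposed shortcut via smooth $h$ bounded away from zero would also require a continuity statement for $\mu_k(\cdot)$ under $L^\infty$ approximation that is not available at this point of the paper and is in fact delicate (cf.\ Section~\ref{section4}). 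The missing idea is the paper's rescaling trick: it converts the transverse smallness of $\Omega_\epsilon$ into an $\epsilon^2$ factor on transverse derivatives \emph{on a fixed domain}, after which Rellich compactness and weak lower semicontinuity do all the work.
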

\begin{proof}
Without loss of generality we can assume that $D(\Omega_{\epsilon, h})\rightarrow 1$. According to Lemma \ref{lUBMU} we have that:
\begin{equation}\label{elimsup}
\mu_k(\Omega_{\epsilon h})\leq \mu_k(\epsilon^{d-1}h)=\mu_k(h),
\end{equation}
the last equality is true because from Definition \ref{dMUk} the eigenvalue $\mu_k(h)$ is invariant under multiplication  of $h$ with a positive constant. In particular from Lemma \ref{lUBMUH} we conclude that there exists a constant $C_k(h)$, that depends on $h$ but does not depend on $\epsilon$, such that 
\begin{equation*}
\mu_k(\Omega_{\epsilon, h})\leq C_k(h).
\end{equation*} 
Let $u_{k,\epsilon}$ be the eigenfunction associated to the eigenvalue $\mu_k(\Omega_{\epsilon, h})$, normalized in such a way that $||u_{k,\epsilon}||_{L^2(\Omega_{\epsilon, h})}=1$, we introduce the following functions 
\begin{equation*}
\overline{u}_{k,\epsilon}(x_1,x')=\epsilon^{\frac{d-1}{2}}u_{k,\epsilon}(x_1,\epsilon x')\quad \forall\; (x_1,x')\in \Omega_h,
\end{equation*}
where $\Omega_h$ is a domain with profile function equal to $h$. We want to prove that the sequence of functions $\overline{u}_{k,\epsilon}$ is bounded in $H^1(\Omega_h)$. We start by bound the quantity $||\nabla \overline{u}_{k,\epsilon}||_{L^2(\Omega_h)}$
\begin{equation*}
\int_{\Omega_h}|\nabla \overline{u}_{k,\epsilon}|^2dx=\epsilon^{d-1}\int_{\Omega_h}(\frac{\partial u_{k,\epsilon}}{\partial x_1})^2+\epsilon^2 |\nabla_{x'} u_{k,\epsilon}|^2dx\leq \int_{\Omega_{\epsilon, h}}|\nabla u_{k,\epsilon}|^2dy=\mu_k(\Omega_{\epsilon, h})\leq   C_k(h),
\end{equation*}
in the inequalities above we use the change of variables $y_1=x_1$ and $y'=\epsilon x'$. Using the same change of variable we also conclude that $||\overline{u}_{k,\epsilon}||_{L^2(\Omega_h)}=||u_{k,\epsilon}||_{L^2(\Omega_{\epsilon, h})}=1$.

We conclude that there exists a function $\overline{U}_k\in H^1(\Omega_h)$ such that, up to a subsequence,  
\begin{align}\label{cCNE}
\overline{u}_{k,\epsilon} &\rightharpoonup \overline{U}_k\; \text{ in } H^1(\Omega_h)\\ \notag
\overline{u}_{k,\epsilon} &\rightarrow \overline{U}_k\; \text{ in } L^2(\Omega_h).
\end{align}
We now prove that the limit function $\overline{U}_k$ does not depend on $x_i$ where $i=2,...,d$, indeed we have that 
\begin{equation*}
\int_{\Omega_h}(\frac{\partial \overline{U}_k}{\partial x_i})^2dx\leq \liminf \int_{\Omega_h}(\frac{\partial \overline{u}_{k,\epsilon}}{\partial x_i})^2dx=\liminf \epsilon^2 \int_{\Omega_{\epsilon, h}}(\frac{\partial u_{k,\epsilon}}{\partial x_i})^2dx=0.
\end{equation*}
We denote by $U_k$ the restriction of the function $\overline{U}_k$ to the variable $x_1$. We can do the same argument with all the other eigenfunctions $u_i$. From the convergences given in \eqref{cCNE} we conclude that, for $\epsilon$ small enough we have that 
\begin{equation}\label{eLBNE}
\mu_k(\Omega_{\epsilon, h})=\max_{\beta\in \mathbb{R}^k}\frac{\sum_i \beta_i^2\int_{\Omega_{\epsilon, h}}|\nabla u_{i,\epsilon}|^2dx}{\sum_i \beta_i^2\int_{\Omega_{\epsilon h}} u_{\epsilon}^2dx}\geq\max_{\beta\in \mathbb{R}^k} \frac{\sum_i \beta_i^2\int_0^1 {U_i'}^2 hdx}{\sum_i \beta_i^2\int_0^1 U_i^2 hdx}+f(\epsilon,h)
\end{equation}
where $f(\epsilon,h)$ goes to 0 when $\epsilon \to 0$.
From the fact that $u_{\epsilon}$ is a Neumann eigenfunction and from the convergences \eqref{cCNE} it is clear that $\int_0^1U_ih=0$, from Definition \ref{dMUk} and from \eqref{eLBNE}, using the test subspace given by $Span[U_1,...,U_k]$ for $\mu_k(h)$ we obtain that
\begin{equation*}
\liminf_{\epsilon\to 0}\mu_k(\Omega_{\epsilon, h})\geq \mu_k(h).
\end{equation*}
Combining this inequality with \eqref{elimsup} we conclude that $\lim D(\Omega_{\epsilon, h})^2\mu_k(\Omega_{\epsilon, h})=\mu_k(h)$.
\end{proof}
From the two previous lemmas, we deduce
\begin{proposition}\label{equalsup}
We have equality between the two quantities
$$\begin{array}{c} \vspace{3mm}
S_1:=\sup \{D(\Omega)^2\mu_k(\Omega), \Omega \subset \mathbb{R}^d, \mbox{ bounded, Lipschitz } \} \\
S_2:=\sup \{\mu_k(h), h\in L^\infty(0,1), h\geq 0,  h\not= 0 \}.
\end{array}$$
\end{proposition}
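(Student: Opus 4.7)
The plan is to establish the equality $S_1=S_2$ by showing two inequalities, each a direct consequence of one of the preceding lemmas.

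First I would argue $S_1\leq S_2$. Take any bounded Lipschitz domain $\Omega\subset\mathbb{R}^d$, put its diameter along the $x_1$-axis and let $h_\Omega$ be its profile function. Since $\Omega$ is bounded and Lipschitz, $h_\Omega\in L^\infty(0,D(\Omega))$, is non-negative, and is not identically zero. After a linear rescaling of the interval $[0,D(\Omega)]$ to $[0,1]$ (which leaves $\mu_k(h_\Omega)$ invariant because both numerator and denominator in the Rayleigh quotient in \eqref{vfslh} scale in the same way up to a constant depending only on the quotient's structure --- this is precisely the scale invariance noted after Definition \ref{dMUk}), Lemma \ref{lUBMU} yields $D(\Omega)^2\mu_k(\Omega)\leq \mu_k(h_\Omega)\leq S_2$. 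Taking the supremum over $\Omega$ gives $S_1\leq S_2$.

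Next I would argue $S_2\leq S_1$. Fix any non-negative $h\in L^\infty(0,1)$, $h\not\equiv 0$. The plan is to realize $\mu_k(h)$ as a limit of quantities of the form $D(\Omega)^2\mu_k(\Omega)$ for admissible $\Omega$. For this, we use the collapsing construction in Lemma \ref{lLBMU}: let $\Omega_{\epsilon,h}\subset \mathbb{R}^d$ be a (bounded Lipschitz) domain whose profile function equals $\epsilon^{d-1}h$. Such a domain can be constructed explicitly, for instance by taking axially symmetric sets of the form $\{(x_1,x'):|x'|\leq c_d(\epsilon^{d-1}h(x_1))^{1/(d-1)}\}$ with $c_d$ the appropriate constant; when $h$ is not regular enough to guarantee Lipschitz regularity, a preliminary approximation step would be needed (mollifying $h$ and passing to the limit using the upper semicontinuity provided by Lemma \ref{lUBMUH} together with an easy continuity of $\mu_k(h)$ with respect to uniform convergence of bounded positive $h$'s). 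Lemma \ref{lLBMU} then gives $D(\Omega_{\epsilon,h})^2\mu_k(\Omega_{\epsilon,h})\to \mu_k(h)$, so $\mu_k(h)\leq S_1$, and taking the supremum yields $S_2\leq S_1$.

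The main subtlety, and hence where the argument needs to be most careful, is the second step: producing an admissible (bounded, Lipschitz) domain $\Omega_{\epsilon,h}$ with prescribed profile $\epsilon^{d-1}h$ when $h$ is merely bounded and non-negative. For smooth positive $h$ this is immediate via a solid of revolution; in general one has to approximate $h$ from below (or in $L^\infty$ weak-$\ast$ sense) by smooth, strictly positive functions $h_n$ with $\mu_k(h_n)\to \mu_k(h)$, apply Lemma \ref{lLBMU} to each $h_n$, and then conclude by a diagonal extraction. Once that approximation is set up cleanly, the combination of the two inequalities gives $S_1=S_2$.
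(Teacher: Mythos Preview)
Your argument is correct and matches the paper's approach, which simply derives the proposition from Lemmas \ref{lUBMU} and \ref{lLBMU}; you have also rightly flagged the one point left implicit there, namely that realizing an arbitrary $h\in L^\infty$ as the profile of a bounded Lipschitz domain may require a preliminary smoothing. One minor correction: the scale invariance noted after Definition \ref{dMUk} is $\mu_k(th)=\mu_k(h)$ for $t>0$, not invariance under rescaling the \emph{interval}---the latter contributes precisely the factor $D(\Omega)^2$, which is already built into the statement of Lemma \ref{lUBMU}, so your parenthetical is unnecessary.
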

Let $\alpha\geq 1$,therefore we are interested in solving the following problem:
\begin{equation*}
\sup \{\mu_k(h^{\alpha}), h\in \mathcal{L}\}.
\end{equation*}
where:
\begin{equation*}
\mathcal{L}:=\{h:[0,1] \to [0,1], h \mbox{ concave }, \max h=1\}.
\end{equation*}
In this way, we will get sharp upper bounds for $D(\Omega)^2\mu_k(\Omega)$ when $\Omega\subset \mathbb{R}^d$ is a set with a profile function that is $\frac{1}{\alpha}-$concave.
\section{Maximization of $\mu_k(h)$}\label{section3}
In this section we study the following maximization problem:
\begin{equation*}
\sup \{\mu_k(h), h\in \mathcal{L}\}. 
\end{equation*}
We first study the particular case $\alpha=1$ because in this case we do not have issues related to the existence of eigenfunctions of 
relaxed Strum-Liouville eigenvalues. In this way the main ideas concerning the use of the first order optimality conditions and the 
proofs of the qualitative property of the maximizers  will be more clear. 

Let us start by the following remark (see \cite{HM21}, \cite{T65}) that follows from the explicit expression of the Green kernel
of the corresponding ODE that turns out to be in $L^2((0,1)\times (0,1))$.
\begin{remark}\label{rmkwelldefined}
Let us assume that there exist $K>0$ and $p<2$ such that $h(x)\geq K(x(1-x))^p$ a. e. in $(0,1)$, then $\mu_k(h)$ defined in  Definition \ref{dMUk} is the $k-$th eigenvalue of the following eigenvalue problem.
\begin{equation*}
\begin{cases}
\vspace{0.3cm}
   -\frac{d}{dx}\big(h(x)\frac{du}{dx}(x)\big)=\mu(h) h(x)u(x)  \qquad  x\in \big(0,1\big) \\
      h(0)\frac{du}{dx}(0)=h(1)\frac{du}{dx}(1)=0,
\end{cases}
\end{equation*}
This is the case, in particular, for a concave (positive inside) function $h$ since such a function satisfies $h(x)\geq K(x(1-x))$ for a positive $K$.
\end{remark}
We recall two important properties for the class $\mathcal{L}$ and the eigenvalues $\mu_k(h)$, see Lemma 3.1 and Lemma 3.6 in \cite{HM21}.
\begin{proposition}\label{propcoco}
\begin{itemize}
\item For any sequence $h_n$ in $\mathcal{L}$, there exists a subsequence (still denoted $h_n$) and a function $h\in \mathcal{L}$ such that
$h_n$ converge to $h$ in $L^2(0,1)$ and uniformly on every compact subset of $(0,1)$.
\item If $h_n \to h$ in $L^2(0,1)$ then, for any $k$, $\mu_k(h_n) \to \mu_k(h)$.
\end{itemize}
\end{proposition}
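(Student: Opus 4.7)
The plan is to treat the two assertions separately, both resting on the basic rigidity of concave bounded functions.

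\textbf{Compactness.} Any $h \in \mathcal{L}$, being concave with values in $[0,1]$, is uniformly Lipschitz on each compact subinterval $[\delta, 1-\delta] \subset (0,1)$ with Lipschitz constant at most $1/\delta$: this follows from the standard slope inequalities for concave functions combined with $0 \leq h \leq 1$. Consequently a sequence $(h_n) \subset \mathcal{L}$ is equicontinuous on every compact $K \subset (0,1)$, and Arzel\`a--Ascoli together with a diagonal extraction over $\delta = 1/m$ produces a subsequence converging uniformly on every compact subset of $(0,1)$ to a limit $h$. Concavity and the bound $0 \leq h \leq 1$ pass to the pointwise limit. The $L^2$ convergence on $(0,1)$ follows from dominated convergence since $|h_n-h|^2 \leq 4$ almost everywhere. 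For the condition $\max h = 1$, letting $x_n$ be a point where $h_n(x_n) = 1$ and extracting so that $x_n \to x_0 \in [0,1]$, we get $h(x_0) = 1$ either by the uniform convergence on a neighborhood of $x_0$ when $x_0 \in (0,1)$, or by defining $h$ at the boundary through its upper semicontinuous envelope otherwise.

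\textbf{Continuity of $\mu_k$.} By Remark \ref{rmkwelldefined}, both $h_n$ and $h$ admit lower bounds of the form $h(x) \geq c\,x(1-x)$, so the $\mu_k$ under consideration are genuine Sturm--Liouville eigenvalues with associated $H^1(0,1)$ eigenfunctions. I would establish the two bounds $\limsup_n \mu_k(h_n) \leq \mu_k(h)$ and $\liminf_n \mu_k(h_n) \geq \mu_k(h)$ separately via a standard min-max argument. For the upper bound, let $u_1,\dots,u_k$ be the first $k$ eigenfunctions associated to $h$, satisfying $\int u_i h = 0$, and define $\tilde u_i^n := u_i - c_i^n$ with $c_i^n := (\int u_i h_n)/(\int h_n)$, so that $\int \tilde u_i^n h_n = 0$. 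Since $\int u_i h = 0$ and $h_n \to h$ in $L^2$ with $\int h_n$ bounded below, $c_i^n \to 0$. Using $\mathrm{span}(\tilde u_1^n,\dots,\tilde u_k^n)$ as test space in \eqref{vfslh} and passing to the limit in the Rayleigh quotients $\int (u_i')^2 h_n / \int (\tilde u_i^n)^2 h_n$ (via the uniform convergence $h_n \to h$ on compact subsets together with a tail estimate at the endpoints) yields $\limsup \mu_k(h_n) \leq \mu_k(h)$.

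For the lower bound, let $u_i^n$ be the $L^2(h_n)$-orthonormal eigenfunctions of $\mu_i(h_n)$ for $i=1,\dots,k$. Lemma \ref{lUBMUH} provides a uniform upper bound $\mu_k(h_n) \leq C$, hence $\int ((u_i^n)')^2 h_n \leq C$; on any compact $[\delta,1-\delta] \subset (0,1)$, $h_n$ is bounded below by a positive constant for $n$ large, which yields a uniform $H^1([\delta,1-\delta])$ bound on $u_i^n$. A diagonal extraction produces $u_i^n \to U_i$ weakly in $H^1_{\mathrm{loc}}(0,1)$ and strongly in $L^2_{\mathrm{loc}}(0,1)$, and the $L^2(h_n)$-orthonormality passes to the limit as $\int U_i U_j h = \delta_{ij}$ and $\int U_i h = 0$. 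Lower semicontinuity of the weighted Dirichlet energy, combined with $\mathrm{span}(U_1,\dots,U_k)$ as test space in \eqref{vfslh} for $h$, then gives $\mu_k(h) \leq \liminf_n \mu_k(h_n)$. The main obstacle is the behaviour at the endpoints where the weight degenerates: the energy and orthogonality integrals must still be controlled there, which I would handle via the quantitative lower bound $h(x) \geq K\,x(1-x)$, confining the $L^2(h)$-mass of the eigenfunctions away from the boundary and promoting the compact-subset convergences to the required integral convergences on the full interval $(0,1)$.
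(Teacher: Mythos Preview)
Your compactness argument is correct and standard; the only delicate point is $\max h=1$ when the maxima $x_n$ drift to an endpoint, but your chord-based reasoning (implicit in the upper semicontinuous extension) does the job, since concavity forces $h_n(y)\geq (1-y)/(1-x_n)$ for $y>x_n$, hence $h(y)\geq 1-y$ and $\sup h=1$.

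For the continuity of $\mu_k$ the paper takes a different route: it refers to \cite{HM21} and emphasises that the argument there hinges on the Green kernel
\[
g(h;x,y)=\Big(\int_0^{\min(x,y)}\frac{t}{h(t)}\,dt+\int_{\max(x,y)}^1\frac{1-t}{h(t)}\,dt\Big)h(y)
\]
lying in $L^2((0,1)\times(0,1))$ whenever $h\in\mathcal{L}$ (since every such $h$ satisfies $h(x)\geq x(1-x)$). This makes the resolvent Hilbert--Schmidt, and $L^2$-convergence $h_n\to h$ translates into convergence of the resolvent kernels in $L^2((0,1)^2)$, hence norm-convergence of the compact operators and convergence of all eigenvalues at once.

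Your min--max route is more hands-on. The upper bound $\limsup\mu_k(h_n)\leq\mu_k(h)$ is fine as written. The lower bound, however, has a genuine gap exactly where you flag it. The limits $U_i$ you extract lie a priori only in $H^1_{\mathrm{loc}}(0,1)$ with $\int_0^1(U_i')^2\,x(1-x)\,dx<\infty$; the variational characterisation \eqref{vfslh} demands test functions in $H^1([0,1])$, so you cannot insert $\mathrm{span}(U_1,\dots,U_k)$ there without further work. You also need to rule out concentration of $L^2(h_n)$-mass of $u_i^n$ at the endpoints (otherwise $\int U_iU_jh=\delta_{ij}$ may fail). The phrase ``confining the $L^2(h)$-mass \dots\ away from the boundary'' gestures at this but does not supply the mechanism. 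Both issues can be resolved---one shows the weighted and unweighted min--max agree for $h\in\mathcal{L}$, and one uses the ODE/Green representation to get uniform $L^\infty$ bounds on the $u_i^n$---but this is precisely the endpoint analysis that the Green-kernel argument of \cite{HM21} packages cleanly. So your strategy is viable, yet the step you postpone is the substantive one; the paper's approach trades the bare-hands min--max for operator-theoretic machinery that handles the degeneracy automatically.
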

\begin{proof}
See Lemma 3.1 and Lemma 3.6 in \cite{HM21} for the proof. We want only to stress the fact that the second point strongly relies on the fact that the Green kernel $g(h;x,y)$ associated to the Sturm-Liouville problem under consideration is in $L^2(0,1)times (0,1)$
\end{proof}

The main result of this section is the following theorem. We recall that $j_{0,1}$ is the first zero of the Bessel function $J_0$.
\begin{theorem}\label{tEFHO}
For any $k\geq 1$, the problem $\max \{\mu_k(h), h\in \mathcal{L}\}$ has a solution $h_k^*$, moreover 
\begin{itemize}
\item $\max \{\mu_1(h), h\in \mathcal{L}\}=\mu_1(h_1^*)=(2j_{0,1})^2$ and 
\begin{equation*}
h^*_1=\begin{cases}
2x \quad &x\in [0,\frac{1}{2}], \\
2(1-x) \quad &x\in [\frac{1}{2},1]
\end{cases}
\end{equation*}
\item let $k\geq 2$ then $\max \{\mu_k(h), h\in \mathcal{L}\}=\mu_k(h_k^*)=(2j_{0,1}+(k-1)\pi)^2$ 
\begin{equation*}
h^*_k=\begin{cases}
\frac{x(2j_{0,1}+(k-1)\pi)}{j_{0,1}} \quad &x\in [0,\frac{j_{0,1}}{(2j_{0,1}+(k-1)\pi)}] \\
1 \quad &x\in [\frac{j_{0,1}}{(2j_{0,1}+(k-1)\pi)},1-\frac{j_{0,1}}{(2j_{0,1}+(k-1)\pi)}]\\
\frac{(1-x)(2j_{0,1}+(k-1)\pi)}{j_{0,1}} \quad &x\in [1-\frac{j_{0,1}}{(2j_{0,1}+(k-1)\pi)},1]
\end{cases},
\end{equation*}
\end{itemize}
\end{theorem}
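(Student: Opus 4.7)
The plan is to establish existence via compactness of $\mathcal{L}$, derive first-order optimality conditions to show that $h_k^*$ is piecewise affine, then identify the specific trapezoidal shape and compute the eigenvalue by explicit solution of the Sturm--Liouville ODE. For existence, every $h\in\mathcal{L}$ satisfies $\|h\|_{L^\infty}=1$ and, by concavity with $\max h=1$, $\|h\|_{L^1}\geq 1/2$; hence Lemma~\ref{lUBMUH} gives $\mu_k\leq C_k$ on $\mathcal{L}$. A maximizing sequence, combined with the $L^2$-compactness and continuity statements of Proposition~\ref{propcoco}, yields a limit $h_k^*\in\mathcal{L}$ with $\mu_k(h_k^*)=\sup_{\mathcal{L}}\mu_k$. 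By Remark~\ref{rmkwelldefined}, concavity of $h_k^*$ ensures $\mu_k(h_k^*)$ is a genuine Sturm--Liouville eigenvalue with an eigenfunction $u_k$ satisfying \eqref{vFSLB}.

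\textbf{Polygonal structure.} Setting $q:=(u_k')^2-\mu_k u_k^2$, a direct computation (using that $u_k$ is a critical point at fixed $h$, so the $\delta u_k$ terms cancel) gives
\[
\frac{d}{dt}\bigg|_{t=0}\mu_k(h_k^*+t\,\delta h)=\frac{1}{\int_0^1 u_k^2\, h_k^*\,dx}\int_0^1 q\,\delta h\,dx,
\]
which must be $\leq 0$ for every admissible perturbation $\delta h$, i.e.\ every $\delta h$ such that $h_k^*+t\,\delta h$ stays concave and non-negative for small $t>0$. If $h_k^*$ were strictly concave on some open interval $I$, small smooth $\pm\delta h$ supported in $I$ would both be admissible, forcing $q\equiv 0$ on $I$. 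This gives $u_k'=\pm\sqrt{\mu_k}\,u_k$, hence $u_k=Ce^{\sigma x}$ on $I$ with $\sigma=\pm\sqrt{\mu_k}$; plugging this into \eqref{vFSLB} yields $(h_k^*)'=-2\sigma h_k^*$, making $h_k^*$ exponential (in particular strictly convex, since $\mu_k>0$) on $I$, which contradicts strict concavity. Therefore $h_k^*$ has no strictly concave piece and is piecewise affine on $[0,1]$.

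\textbf{Reduction to the trapezoid.} Following the abstract optimality framework of \cite{LaNo}, two-sided \emph{sharpen/round} perturbations at each interior kink of $h_k^*$ produce an equality of the form $\int q\,\varphi_i\,dx=0$ against a tent-shaped test function, while admissible perturbations at the endpoints $x=0,1$ yield further constraints that force $h_k^*(0)=h_k^*(1)=0$. Combining these with the explicit (Bessel/sinusoidal) form of $u_k$ on each affine piece and the $x\mapsto 1-x$ symmetry of the functional, $h_k^*$ is identified as the isosceles trapezoid $h_k^*(x)=x/a$ on $[0,a]$, $h_k^*\equiv 1$ on $[a,1-a]$, and $h_k^*(x)=(1-x)/a$ on $[1-a,1]$ for some $a\in(0,1/2]$. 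This structural reduction from \emph{piecewise affine} to \emph{at most three affine pieces} is the main obstacle: \emph{a priori} a concave polygon may have arbitrarily many kinks, and ruling them out requires exploiting the first-order condition simultaneously at every kink together with the explicit eigenfunction form on each affine piece.

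\textbf{Eigenvalue computation.} On $[0,a]$ the ODE reduces (after cancelling $1/a$) to the Bessel equation of order $0$, $(xu_k')'+\mu_k x u_k=0$, whose solution regular at the vanishing endpoint is $u_k(x)=A\,J_0(\sqrt{\mu_k}\,x)$. On $[a,1-a]$ the ODE reads $u_k''+\mu_k u_k=0$, giving sinusoidal solutions. Matching $u_k$ and $h_k^* u_k'$ across $x=a$, imposing symmetry about $1/2$, and requiring $u_k$ to have exactly $k$ zeros in $(0,1)$ (so as to identify $\mu_k$ with the $k$-th eigenvalue) yields the two relations $\sqrt{\mu_k}\,a=j_{0,1}$ (so that $u_k(a)=0$ and the middle sinusoid starts at zero) and $\sqrt{\mu_k}\,(1-2a)=(k-1)\pi$ ($k-1$ nodal arches in the flat piece). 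Solving gives $\sqrt{\mu_k}=2j_{0,1}+(k-1)\pi$ and $a=j_{0,1}/\bigl(2j_{0,1}+(k-1)\pi\bigr)$, matching the stated formulas, with the tent case $a=1/2$ and $\sqrt{\mu_1}=2j_{0,1}$ recovered for $k=1$.
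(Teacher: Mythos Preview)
Your outline captures the overall architecture of the paper's proof (existence by compactness, first-order optimality, polygonal structure, explicit Bessel/sinusoidal computation), but there are genuine gaps at the two hardest steps.

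\textbf{Piecewise affine does not follow from your argument.} Your strictly-concave-interval argument correctly shows that the support $S$ of $(h_k^*)''$ contains no interval (exactly as the paper does). But ``$S$ has empty interior'' is not the same as ``$S$ is finite'': a priori $S$ could be a Cantor-type set, so $h_k^*$ could have countably many affine pieces. The paper closes this gap by a counting argument you have skipped entirely: on each internal interval of $S^c$ the Lagrange multiplier $\xi$ solves $-\xi''=f$, $\xi\geq 0$, with Dirichlet and (from $C^1$-regularity of $\xi$) Neumann data, which forces $f={u'}^2-\mu_k u^2$ to vanish at least twice there. A separate nodal analysis (Proposition~\ref{propkey}) shows $f$ has \emph{exactly} $2k$ zeros on $(0,1)$. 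Combining the two bounds the number of intervals in $S^c$ by $k+1$, which is what actually yields a polygonal $h_k^*$.

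\textbf{The reduction to the trapezoid and the condition $u_k(a)=0$ are the heart of the proof, and you have not done them.} You write that this reduction ``is the main obstacle'' and that ``ruling [extra kinks] out requires exploiting the first-order condition simultaneously at every kink''---but then you do not carry this out; you simply assert the trapezoid shape, assume symmetry, and impose $u_k(a)=0$ in the matching. None of these is automatic. In the paper, the optimality conditions at the kinks are unpacked in two lemmas: first $\int_{x_i}^{x_{i+1}} f\,h=0$ gives $u_k(x_i)u_k'(x_i)=0$ at every vertex (Lemma~\ref{lOC}), and then a separate perturbation (digging down at a vertex where $u_k'(x_i)=0$, $u_k(x_i)\neq 0$, so that $f<0$ nearby) shows $u_k(x_i)=0$ at every \emph{interior} vertex (Lemma~\ref{lOC2}). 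Only after this does one know $\sqrt{\mu_k}\,a=j_{0,m_1}$ and $\sqrt{\mu_k}\,b=j_{0,m_2}$; the transcendental equation on the plateau then gives $\sqrt{\mu_k}=j_{0,m_1}+j_{0,m_2}+\ell\pi$ with $m_1+m_2+\ell=k+1$, and maximizing over $(m_1,m_2,\ell)$ using $j_{0,m+1}-j_{0,m}<\pi$ yields $m_1=m_2=1$, $\ell=k-1$. Symmetry of $h_k^*$ is an \emph{output} of this optimization, not an input. Your final paragraph presupposes exactly the conclusions of Lemmas~\ref{lOC} and~\ref{lOC2} without proving them, so as written the argument is circular at the key step.
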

We prove this theorem in several steps, the first important one in order to prove this result is the following theorem

\begin{theorem}\label{theok+1}
For any $k\geq 1$, the problem $\max \{\mu_k(h), h\in \mathcal{L}\}$ has a solution. This one has a graph that is a polygonal line composed of (at most) $k+1$ segments.
\end{theorem}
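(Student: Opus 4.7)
The plan is to obtain $h^{\ast}$ by the direct method and then to exploit first-order optimality conditions to force $h^{\ast}$ to be polygonal with at most $k+1$ segments.

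For existence, I will use that every $h\in\mathcal{L}$ satisfies $h\le 1$ and $\|h\|_{L^1}\ge 1/2$ (a consequence of concavity together with $\max h=1$), so Lemma~\ref{lUBMUH} bounds $\mu_k$ uniformly on $\mathcal{L}$; the $L^2$-compactness of $\mathcal{L}$ and the $L^2$-continuity of $\mu_k$ from Proposition~\ref{propcoco} then deliver a maximizer $h^{\ast}\in\mathcal{L}$. For the first-order conditions, $h^{\ast}(x)\ge 2\min(x,1-x)$ by concavity, so Remark~\ref{rmkwelldefined} yields an eigenfunction $u_k$ (normalized by $\int u_k^2 h^{\ast}\,dx=1$), and the standard first variation reads
\[
\int_0^1 f\,\eta\,dx\le 0,\qquad f:=(u_k')^2-\mu_k(h^{\ast})u_k^2,
\]
for every $\eta$ with $h^{\ast}+t\eta\in\mathcal{L}$ for small $t>0$. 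Letting $F\in C^2([0,1])$ be the primitive of $f$ with $F(0)=F(1)=0$, double integration by parts rewrites the condition as $\int F\,d\eta''\le 0$ for admissible perturbations in $H^2_0$.

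For the polygonal structure, I will decompose $-(h^{\ast})''$ into an atomic and a continuous part. Admissibility forces $\eta''\le 0$ on the open set where $h^{\ast}$ is affine, but allows $\eta''$ of either sign (of sufficiently small total variation) on the support of the continuous part of $(h^{\ast})''$. On any interval $I$ where the continuous part does not vanish, this two-sign test forces $F\equiv 0$ on $I$, hence $f=F''\equiv 0$ there, so $u_k'=\pm\sqrt{\mu_k(h^{\ast})}\,u_k$ on $I$; plugging back into the ODE $(h^{\ast}u_k')'+\mu_k(h^{\ast})h^{\ast}u_k=0$ forces $h^{\ast}$ itself to be exponential on $I$, contradicting the presence of a non-atomic $(h^{\ast})''$. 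Therefore $h^{\ast}$ is polygonal.

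For the kink count, let $0<x_1<\dots<x_N<1$ be the interior kinks. Taking $\eta''$ to be a small Dirac of either sign at $x_i$ forces $F(x_i)=0$; admissibility on each affine piece (where only $\eta''\le 0$ is permitted) forces $F\ge 0$ there. So $F\in C^2([0,1])$ is non-negative on $[0,1]$ and vanishes at $0,x_1,\dots,x_N,1$, with each interior zero at least double since $F\ge 0$ is smooth; the total zero-multiplicity of $F$ is at least $2N+2$. Two applications of Rolle's theorem then give at least $2N$ zeros (counted with multiplicity) of $f=F''$. A Pr\"ufer/Sturm oscillation analysis for the $k$-th Neumann eigenfunction $u_k$ --- which has exactly $k$ simple zeros in $(0,1)$ alternating with $k+1$ critical points including the two Neumann endpoints --- shows that $f$ has exactly $2k$ sign changes in $(0,1)$ and no other zeros, so $f$ has at most $2k$ zeros with multiplicity. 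The inequality $2N\le 2k$ then gives $N\le k$, and $h^{\ast}$ has at most $k+1$ affine segments. The hardest step will be this sharp upper bound on the zeros of $f$ (the oscillation argument gives the lower bound $2k$ easily but the upper bound is more delicate) together with the book-keeping when $u_k$ vanishes at an endpoint or at a kink of $h^{\ast}$; the polygonality step also needs a careful rigorous treatment of the distribution-valued perturbations $\eta''$ in the cone of admissible directions.
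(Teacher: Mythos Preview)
Your outline is essentially the paper's own strategy, repackaged: your double antiderivative $F$ with $F''=f$, $F\ge 0$, $F(x_i)=0$ is (up to sign and an affine correction on each piece) exactly the Lagrange multiplier $\xi$ of Proposition~\ref{propoptco}, and your Rolle count ``$f$ has at least $2N$ zeros'' is the content of Lemma~\ref{lemmaxi} together with the boundary analysis at the end of the proof of Theorem~\ref{theok+1}. So the architecture is right.

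Two points deserve care. First, your phrase ``a Pr\"ufer/Sturm oscillation analysis shows that $f$ has exactly $2k$ sign changes'' hides the real work. Standard Pr\"ufer theory controls zeros of $u$ and of $u'$, not of $f=(u')^2-\mu u^2$; the Pr\"ufer angle $\theta$ is \emph{not} monotone through the values where $|u'|=\sqrt{\mu}\,|u|$. The paper's Proposition~\ref{propkey} establishes the upper bound by a hands-on argument that uses concavity in an essential way: one first shows $u$ is unimodal on each nodal interval (using $h'/h>0$ and the ODE), then on the monotone-decreasing half one introduces $\varphi=u/u'$ and exploits that $h'/h$ is \emph{decreasing} (a consequence of concavity) to rule out three consecutive zeros of $u'+\sqrt{\mu}\,u$. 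Your plan should make explicit that concavity enters here, not only through the admissible cone of perturbations; without it the upper bound on the zeros of $f$ can fail.

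Second, in your polygonality step the sentence ``forces $h^\ast$ to be exponential on $I$, contradicting the presence of a non-atomic $(h^\ast)''$'' is not a contradiction as stated: an exponential \emph{does} have a non-atomic second derivative. The actual contradiction (as in the proof of Theorem~\ref{theok+1}) is with concavity, since $h=e^{\pm 2\sqrt{\mu}\,x}$ is strictly convex. Also, be aware that your ``$\eta\in H^2_0$'' framework and your ``$\eta''$ a small Dirac at $x_i$'' are mutually inconsistent; to test at kinks you must allow $\eta$ with $\eta''$ a signed measure (piecewise-affine $\eta$), which is exactly why the paper works with the abstract multiplier formalism of Proposition~\ref{propoptco}. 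Finally, the paper needs Lemma~\ref{lhvanish} ($h^\ast(0)=h^\ast(1)=0$) and the consequence $u'(0)=u'(1)=0$ to handle the two boundary pieces in the zero count; your endpoint bookkeeping should incorporate this, since $F(0)=F(1)=0$ alone does not guarantee $F'(0),F'(1)$ vanish.
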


The existence of a maximizer follows immediately from the compactness of the class $\mathcal{L}$ and the continuity of the eigenvalues
stated in Proposition \ref{propcoco}. We will denote by $h_k^*$ (or simply $h^*$ if no confusion can occur)  a maximizer.

\medskip
For the qualitative properties of the maximizer, we will mainly use the optimality conditions  but we have to take into account the concavity constraint
and the bound constraints $0\leq h(x) \leq 1$. Let us first give the derivative of the eigenvalue.
We consider $t>0$ a positive number, and we define the following derivatives:
for every $\phi\in \mathcal{L}$ we define $\mu_{t,\phi}:=\mu_k(h+t\phi)$ and we denote by $u_{t,\phi}$ the corresponding eigenfunction. 
We use the following notation for the derivative of the eigenvalue (since this is a Sturm-Liouville problem, we know that the eigenvalue is simple
and then differentiable):
\begin{equation*}
\dot{\mu}_{\phi}:=\frac{d}{dt}\mu_k(h+t\phi)\Big |_{t=0}
\end{equation*} 
and the following notation for the derivative of the eigenfunction :
\begin{equation*}
\dot{u}_{\phi}:=\frac{d}{dt}u_{t,\phi}\Big |_{t=0}.
\end{equation*} 
\begin{lemma}\label{ldermuk}
The derivative of $\mu_k$ in the direction $\phi$ is given by
\begin{equation}\label{dermuk}
\dot{\mu}_{\phi}=\int_0^1 ({u'}^2 - \mu_k u^2) \phi dx
\end{equation}
where $u$ is the eigenfunction associated to $\mu_k(h)$ normalized by $\int_0^1 hu^2 =1$.
\end{lemma}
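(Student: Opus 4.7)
The strategy is to differentiate a normalised Rayleigh identity with respect to $t$; the only subtle point is the differentiability of the eigenpair $(\mu_{t,\phi},u_{t,\phi})$ at $t=0$. I would invoke this from classical analytic perturbation theory (Kato): it applies here because \eqref{vFSLB} is a one-dimensional Sturm--Liouville problem, so $\mu_k(h)$ is a simple eigenvalue (as already noted in the text preceding the lemma).

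Once differentiability is granted, I would normalise by $\int_0^1 (h+t\phi) u_{t,\phi}^2 \, dx = 1$ for all small $t$. Multiplying the equation $-\bigl((h+t\phi)u_{t,\phi}'\bigr)' = \mu_{t,\phi}(h+t\phi)u_{t,\phi}$ by $u_{t,\phi}$ and integrating by parts (the boundary terms vanish by the conditions in \eqref{vFSLB}) yields
\begin{equation*}
\mu_{t,\phi} \;=\; \int_0^1 (h+t\phi)\,(u_{t,\phi}')^2 \, dx .
\end{equation*}
Writing $u=u_{0,\phi}$ and differentiating at $t=0$ gives, by the product rule,
\begin{equation*}
\dot{\mu}_\phi \;=\; \int_0^1 \phi\,(u')^2 \, dx \;+\; 2\int_0^1 h\, u'\, \dot{u}_\phi' \, dx .
\end{equation*}

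To eliminate the $\dot{u}_\phi$-term, I would plug the test function $v=\dot{u}_\phi$ into the weak form of the eigenvalue equation satisfied by $(h,\mu_k,u)$, obtaining
\begin{equation*}
\int_0^1 h\, u'\, \dot{u}_\phi' \, dx \;=\; \mu_k \int_0^1 h\, u\, \dot{u}_\phi \, dx .
\end{equation*}
Next, differentiating the normalisation $\int_0^1 (h+t\phi) u_{t,\phi}^2 \, dx = 1$ at $t=0$ gives $\int_0^1 \phi u^2 \, dx + 2\int_0^1 h\, u\, \dot{u}_\phi \, dx = 0$, whence $2\int_0^1 h\, u\, \dot{u}_\phi \, dx = -\int_0^1 \phi u^2 \, dx$. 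Substituting back produces
\begin{equation*}
\dot{\mu}_\phi \;=\; \int_0^1 \phi\,(u')^2 \, dx \;-\; \mu_k\int_0^1 \phi\, u^2 \, dx \;=\; \int_0^1 \bigl((u')^2 - \mu_k u^2\bigr)\phi \, dx,
\end{equation*}
which is exactly \eqref{dermuk}. The main obstacle, as indicated, is purely the justification of the smooth dependence of $(\mu_{t,\phi},u_{t,\phi})$ on $t$; after that, the computation reduces to an application of the weak form against $\dot{u}_\phi$ together with the differentiated normalisation. In particular, no structural property of the direction $\phi$ (concavity, sign, bound, $\ldots$) is used, so the formula is valid for arbitrary admissible perturbations.
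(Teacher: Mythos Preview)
Your proof is correct and follows essentially the same strategy as the paper: differentiate the eigenvalue relation, then use the weak form of the equation for $u$ with test function $\dot{u}_\phi$ to eliminate the derivative of the eigenfunction. The only difference is bookkeeping: the paper differentiates the strong form of the ODE and then cancels the resulting boundary terms by differentiating the boundary condition $(h+t\phi)u_{t,\phi}'=0$, whereas you differentiate the Rayleigh identity and the normalisation $\int_0^1 (h+t\phi)u_{t,\phi}^2=1$, so the boundary terms never appear explicitly. Both routes invoke the same justification for differentiability (simplicity of Sturm--Liouville eigenvalues plus analytic perturbation/implicit function theorem), and both arrive at \eqref{dermuk} with no further ingredients.
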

\begin{proof}
Since this kind of perturbation is classical,  see e.g. \cite[section 5.7]{HPb} we just perform a formal computation here, the complete justification would involve
an implicit function theorem together with Fredholm alternative.
Let us compute $\dot{\mu}_{\phi}$, from \eqref{vFSLB} we know that 
\begin{equation*}
\frac{d}{dt}\Big [ -\frac{d}{dx}\big((h+t\phi)\frac{d u_{t,\phi}}{dx}\big)\Big ]\Big |_{t=0}=\frac{d}{dt}[\mu_{t,\phi}(h+t\phi) u_{t,\phi}]\Big |_{t=0},
\end{equation*}
so we obtain the following differential equation satisfied by $\dot{u}_{\phi}$
\begin{equation}\label{eUD}
-\frac{d}{dx}\left(h\frac{d \dot{u}_{\phi}}{dx}\right)-\frac{d}{dx}\left(\phi\frac{d u}{dx}\right)=\dot{\mu}_{\phi} h u +\mu_k h \dot{u}_{\phi} + \mu_k \phi u.
\end{equation}
Multiplying both side of the above equation by $u$ and integrating we obtain 
\begin{equation}\label{eND}
\int_0^1 h u' {\dot{u}_{\phi}}'-uh{\dot{u}_{\phi}}'\|_{x=0}^{x=1} + \int_0^1 \phi {u'}^2 - u\phi u''\|_{x=0}^{x=1} = \dot{\mu}_{\phi} + \mu_k \int_0^1 \phi u^2 +\mu_k
\int_0^1 h u \dot{u}_{\phi}
\end{equation}
where we have used $\int_0^1 hu^2 =1$.
Now from the variational formulation satisfied by $u$ we see that
$$\int_0^1 h u' {\dot{u}_{\phi}}' = \mu_k \int_0^1 h u \dot{u}_{\phi}$$
On the other hand, differentiating w.r.t. $t$ the boundary condition $(h+t\phi)(0)u_{t,\phi}(0)$ yields
$$\phi(0) u(0) +h(0) \dot{u}_{\phi}(0) =0 \quad \mbox{and}\quad \phi(1) u(1) +h(1) \dot{u}_{\phi}(1) =0$$
therefore the boundary terms cancel and formula \eqref{dermuk} follows.
\end{proof}
Let us now write the first order optimality condition, taking into account the concavity constraint (i.e. $h^{\prime\prime} \leq 0$) and the bound constraints
$0\leq h\leq 1$. For that purpose, we follow the formalism of \cite{LaNo}, (Proposition 2.3.2) see also the abstract framework in \cite{MaZo}:
\begin{proposition}\label{propoptco}
Let $h^*$ be a maximizer of $\mu_k(h)$ in the class $\mathcal{L}$,. We denote by $S$ the support of (the negative measure) $h^{\prime\prime}$. Then there exist
\begin{itemize}
\item a function $\xi \in H^1(0,1)$, such that $\xi \geq 0$, $\xi=0$ on $S$,
\item two non-negative Radon measures $\nu_0,\nu_1$ such that $suppt(\nu_0)\subset \{x|h^*(x)=0\}$ and $suppt(\nu_1)\subset \{x|h^*(x)=1\}$
\end{itemize}
such that, for any $\phi \in H^1(0,1)$
\begin{equation}\label{condopt}
<\dot{\mu}_{\phi},\phi>=-<\xi^{\prime\prime},\phi> +\int_0^1 \phi d\nu_0 - \int_0^1 \phi d\nu_1 .
\end{equation}
\end{proposition}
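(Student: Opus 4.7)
The strategy is a direct application of the Karush–Kuhn–Tucker formalism for cone-constrained optimization, following the framework of \cite{LaNo} (Proposition 2.3.2) and \cite{MaZo}. The admissible class $\mathcal{L}$ is cut out by three convex inequality constraints on $h$: the concavity constraint $h^{\prime\prime}\leq 0$ (in the sense of distributions/measures), the lower bound $h\geq 0$, and the upper bound $h\leq 1$. By Lemma \ref{ldermuk}, $\mu_k$ is Gateaux differentiable at $h^*$ and $\phi\mapsto\dot\mu_\phi$ is a continuous linear functional on $H^1(0,1)$. Optimality of $h^*$ then amounts to the variational inequality $\dot\mu_\phi\leq 0$ for every $\phi$ in the tangent cone $T_{h^*}\mathcal{L}$.

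First I would identify this tangent cone: $\phi\in T_{h^*}\mathcal{L}$ if and only if $\phi^{\prime\prime}\leq 0$ as a measure on the complement of $S$ (where $(h^*)^{\prime\prime}=0$ and no downward slack is available), $\phi\geq 0$ on $\{h^*=0\}$, and $\phi\leq 0$ on $\{h^*=1\}$. Next I would dualize: applying Hahn–Banach (or the polar-cone characterization as in \cite{LaNo}) to the continuous functional $\dot\mu_\bullet$ on this closed convex cone produces non-negative Lagrange multipliers, namely a non-negative $\xi$ paired with $\phi^{\prime\prime}$ and vanishing where the concavity constraint is active (that is, on $S$), together with two non-negative Radon measures $\nu_0,\nu_1$ supported on $\{h^*=0\}$ and $\{h^*=1\}$ respectively. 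The resulting stationarity identity reads
\begin{equation*}
\dot\mu_\phi \;=\; -\langle \xi,\phi^{\prime\prime}\rangle + \int_0^1 \phi\,d\nu_0 - \int_0^1 \phi\,d\nu_1,
\end{equation*}
and two integrations by parts (carried out first for $\phi\in C_c^\infty(0,1)$ and then extended to $H^1$ by density) yield \eqref{condopt}; the boundary terms cancel thanks to the Neumann-type boundary conditions inherent to the Sturm--Liouville problem and to the construction of the multipliers $\nu_0,\nu_1$ at the endpoints.

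The main obstacle is the regularity claim $\xi\in H^1(0,1)$: a priori the multiplier associated to a second-order inequality constraint is only a non-negative distribution. However, the explicit formula $\dot\mu_\phi=\int_0^1({u'}^2-\mu_k u^2)\phi\,dx$ from Lemma \ref{ldermuk} shows that the right-hand side of the KKT identity, viewed in $\mathcal{D}'(0,1)$, reads $-\xi^{\prime\prime}+\nu_0-\nu_1=({u'}^2-\mu_k u^2)$ with the source in $L^\infty(0,1)$ and $\nu_0,\nu_1$ bounded measures. In one dimension this forces $\xi^{\prime\prime}$ to be a bounded measure, hence $\xi'\in BV(0,1)\subset L^\infty$ and $\xi\in H^1(0,1)$, as asserted. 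The complementary slackness conditions $\xi=0$ on $S$, $\mathrm{supp}(\nu_0)\subset\{h^*=0\}$, and $\mathrm{supp}(\nu_1)\subset\{h^*=1\}$ then come from the standard KKT duality applied to each individual constraint.
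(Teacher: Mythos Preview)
Your proposal is correct and follows exactly the route the paper takes: the paper does not give its own proof of this proposition but simply invokes the abstract KKT/cone-constraint formalism of \cite{LaNo} (Proposition 2.3.2) and \cite{MaZo}, which is precisely what you unpack in more detail. Your extra paragraph justifying $\xi\in H^1(0,1)$ via $\xi''\in\mathcal{M}(0,1)\Rightarrow\xi'\in BV\subset L^\infty$ is a useful addition that the paper leaves implicit.
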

Note that, since $h^*$ is non-negative and concave with $\max h^*=1$ we have the following properties of the sets $\{x/h^*(x)=0\}$ and $\{x/h^*(x)=1\}$:
\begin{equation}\label{h=0}
\{x/h^*(x)=0\} \subset \{0\}\cup \{1\}
\end{equation}
\begin{equation}\label{h=1}
\{x/h^*(x)=1\} \;\mbox{is empty or an interval } [a_1,a_2] (\mbox{ with $0<a_1\leq a_2<1$}).
\end{equation}
Moreover, since the eigenvalue $\mu_k(h)$ is invariant when we multiply $h$ by a positive constant, we can even assume, if needed, that the bound constraint
$h^*=1$ is not saturated.

Let us introduce the function $f:={u'}^2-\mu_k u^2$, then, according to \eqref{dermuk}, $<\dot{\mu}_{\phi},\phi>=\int_0^1 f\phi dx$.
From the ODE satisfied by the eigenfunction, we see that $u\in H^2(0,1)\subset C^1([0,1])$, therefore $f$ is continuous on $[0,1]$.

Taking $\phi$ compactly supported in $(0,a_1)\cup (a_2,1)$ in \eqref{condopt} yields
$$\int_0^1 f\phi dx = -<\xi^{\prime\prime},\phi>$$
therefore, $\xi$ satisfies, in the sense of distributions
\begin{equation}\label{eqxi}
- \xi^{\prime\prime} = f \quad \mbox{on } (0,a_1)\cup (a_2,1).
\end{equation}
We also know that $\xi$ vanishes on  $S$ the support of ${h^*}^{\prime\prime}$. This support being closed, its complement
$S^c$ is a union of intervals: $S^c=\bigcup_{i\in I} (\alpha_i,\beta_i)$. 
We will distinguish the internal intervals (those for which $\alpha_i>0$ and $\beta_i<1$) and the boundary intervals: it will be a consequence
of the proof below that the number of internal intervals is finite, thus we will have only two boundary intervals that we will denote $(0,\beta_b)$
and $(\alpha_b,1)$.

Let us start with the internal intervals, on such an interval $(\alpha_i,\beta_i)$, $\xi$ is solution of
\begin{equation}\label{eqxi2}
\left\lbrace\begin{array}{l}
- \xi^{\prime\prime} = f \quad \mbox{on } (\alpha_i,\beta_i) \\
\xi(\alpha_i)=\xi(\beta_i) =0
\end{array}\right.
\end{equation}
We infer:
\begin{lemma}\label{lemmaxi}
The function $f$ vanishes at least two times on each internal interval $(\alpha_i,\beta_i)$.
\end{lemma}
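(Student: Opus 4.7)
My approach is to use the boundary behavior of $\xi$ on the interval $(\alpha_i,\beta_i)$ together with the sign constraint $\xi \geq 0$ to force $f$ to change sign at least twice inside this interval.

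First, using the scale invariance $\mu_k(th)=\mu_k(h)$ pointed out right after Proposition~\ref{propoptco}, I would assume without loss of generality that $h^*<1$ everywhere, so that $\nu_1\equiv 0$; meanwhile $\nu_0$ is supported only at $\{0,1\}$ by \eqref{h=0}. Testing the optimality condition \eqref{condopt} against $\phi\in C^\infty_c(0,1)$ then reduces it to the distributional equation $-\xi''=f$ on $(0,1)$, and since $f$ is continuous this upgrades to $\xi\in C^2(0,1)$ in the classical sense.

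Next I would establish two key boundary identities at the interior endpoints $\alpha_i,\beta_i$. Since $\xi$ is continuous on $(0,1)$ and vanishes on $S$, it vanishes also on $\overline{S}$; in particular $\xi(\alpha_i)=\xi(\beta_i)=0$. Moreover, since $\xi \geq 0$ and $\xi$ is now $C^1$ on $(0,1)$, the points $\alpha_i,\beta_i$ are interior minima of $\xi$, which yields $\xi'(\alpha_i)=\xi'(\beta_i)=0$.

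Integrating $-\xi''=f$ across the interval then gives
\begin{equation*}
\int_{\alpha_i}^{\beta_i} f\,dx \;=\; \xi'(\alpha_i)-\xi'(\beta_i)\;=\;0.
\end{equation*}
Suppose for contradiction that $f$ had at most one zero in $(\alpha_i,\beta_i)$. If $f$ has constant sign (with at most one interior zero), then $\int f\ne 0$ unless $f\equiv 0$, which would entail infinitely many zeros --- contradiction. If instead $f$ crosses sign exactly once at some $c\in(\alpha_i,\beta_i)$, say with $f\geq 0$ on $(\alpha_i,c)$ and $f\leq 0$ on $(c,\beta_i)$, then the identity $\xi'(x)=-\int_{\alpha_i}^x f\,dy$ combined with $\xi'(\alpha_i)=0$ gives $\xi'(x)\leq 0$ on $(\alpha_i,c)$ with strict inequality past the first point where $\int_{\alpha_i}^x f>0$, forcing $\xi(x)<\xi(\alpha_i)=0$ for $x$ slightly greater than $\alpha_i$ and contradicting $\xi\geq 0$. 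The opposite sign pattern is ruled out symmetrically near $\beta_i$ using $\xi'(x)=\int_x^{\beta_i}f\,dy$.

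The step I expect to be the main obstacle is the rigorous justification that $\xi'(\alpha_i)=\xi'(\beta_i)=0$, because a priori $\xi\in H^1$ need not be $C^1$. The scale-invariance reduction is crucial here: by ensuring the bound constraint $h^*=1$ is never saturated, it confines the singular measures $\nu_0,\nu_1$ to $\{0,1\}$, thereby promoting $\xi$ to a classical $C^2$ solution of $-\xi''=f$ on all of $(0,1)$; once this is secured, vanishing of $\xi'$ at the interior minima $\alpha_i,\beta_i$ is automatic, and the remaining argument is a direct ODE sign-chasing.
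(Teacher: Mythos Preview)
Your argument is correct and follows essentially the same path as the paper: both use $\xi\geq 0$ together with $\xi(\alpha_i)=\xi(\beta_i)=0$ to force $\xi'(\alpha_i)=\xi'(\beta_i)=0$, and then exploit this to rule out fewer than two zeros of $f$. The only cosmetic difference is in the last step: the paper extracts the pair of moment conditions $\int_{\alpha_i}^{\beta_i} f\,dt=0$ and $\int_{\alpha_i}^{\beta_i} t f\,dt=0$ and notes that a single zero $\gamma$ would make $\int (t-\gamma)f\neq 0$, whereas you keep the ODE $-\xi''=f$ and argue by direct sign-chasing on $\xi$ --- these are equivalent formulations of the same contradiction.
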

\begin{proof}
Let us denote $(\alpha,\beta)=(\alpha_i,\beta_i)$.
From equation \eqref{eqxi2} we can get an explicit expression of $\xi$ on the interval $(\alpha,\beta)$, namely
\begin{equation}\label{explixi}
\xi(x)=\frac{x-\alpha}{\beta - \alpha} \int_\alpha^\beta (\beta-t) f(t) dt - \int_\alpha^x (x-t) f(t) dt  .
\end{equation}
Now, since $f$ is continuous, the function $\xi$ belongs to $H^2((0,a_1)\cup (a_2,1))$ and then it is $C^1$ (and even $C^2$).
But $\xi$ being non-negative on $(0,a_1)\cup (a_2,1)$, we necessarily have $\xi'(\alpha)=\xi'(\beta)=0$. With the explicit expression
\eqref{explixi}, we get $\xi'(x)=\frac{1}{\beta - \alpha} \int_\alpha^\beta (\beta-t) f(t) dt - \int_\alpha^x  f(t) dt$
thus, this provides the two relations
$$\int_\alpha^\beta (\beta-t) f(t) dt=0 \quad \int_\alpha^\beta  f(t) dt=0$$
or
\begin{equation}\label{eOCf}
\int_\alpha^\beta f(t) dt=0 \quad \int_\alpha^\beta  t f(t) dt=0.
\end{equation}

These two relations imply the thesis. Indeed from the first one $f$ must vanish at least one time, say at $x=\gamma$. Assuming,
that $f$ vanishes only at $\gamma$ would provide $\int_\alpha^\beta (t-\gamma) f(t) dt \not= 0$ a contradiction.
\end{proof}
In order to get the result announced in Theorem \ref{theok+1}, we need to prove that the complement $S^c$ of the support of 
${h^*}^{\prime\prime}$ has only $k+1$ components (including possibly the interval $(a_1,a_2)$ if it is not empty and two boundary intervals). For that purpose,
we study the function $f$ on any nodal domain of $u$.  We distinguish here the "internal" nodal intervals, those where $u$ vanishes at the two extremities
and the two "boundary" nodal intervals (where $u$ vanishes a priori only at one extremity).
The key proposition is the following:
\begin{proposition}\label{propkey}
The function $f$ vanishes exactly two times on an internal nodal interval of $u$ and exactly one time on a boundary nodal interval.
\end{proposition}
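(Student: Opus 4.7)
The plan is to combine a direct sign analysis at the zeros and critical points of $u$ with a Prüfer-type monotonicity argument driven by the Sturm-Liouville ODE.

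For the lower bounds (``at least'' two or one zeros), I would proceed by sign comparison. Uniqueness for the Sturm-Liouville ODE forbids the simultaneous vanishing of $u$ and $u'$, so at every zero $c$ of $u$ one has $f(c) = u'(c)^2 > 0$, while at any interior critical point $x_0$ of $u$ one has $f(x_0) = -\mu_k u(x_0)^2 < 0$. On an internal nodal interval $(a,b)$ with $u > 0$ inside and unique interior maximum at $x_0$, the sign pattern $+,-,+$ at $a,x_0,b$ gives, by the intermediate value theorem, at least one zero of $f$ on each of $(a,x_0)$ and $(x_0,b)$. On a boundary nodal interval $[0,x_1]$ the Neumann condition $h(0)u'(0)=0$ makes $x=0$ play the role of a critical point: either $u'(0)=0$ directly (if $h(0) > 0$), yielding $f(0) = -\mu_k u(0)^2 < 0$, or $h(0)=0$ and a local Bessel-type analysis of the ODE near $0$ again yields an analogous negative sign. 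Combined with $f(x_1) > 0$ this gives at least one zero.

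To upgrade these lower bounds to equalities I would use the weighted Prüfer substitution $u = \rho\sin\theta$, $hu' = \rho\cos\theta$. A direct computation from $(hu')'=-\mu_k h u$ produces
\begin{equation*}
\theta'(x) = \frac{\cos^2\theta(x)}{h(x)} + \mu_k\,h(x)\sin^2\theta(x),
\end{equation*}
which is strictly positive wherever $h > 0$, so $\theta$ is strictly increasing and gains exactly $\pi$ across each nodal interval of $u$. In these variables one has $f = \rho^2\bigl(\cos^2\theta/h^2 - \mu_k\sin^2\theta\bigr)$, hence zeros of $f$ correspond to the equations $\cot\theta = \pm\sqrt{\mu_k}\,h$. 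Introducing $g_\pm(x) := \cot\theta(x) \mp \sqrt{\mu_k}\,h(x)$, a transversality computation at a zero gives $g_\pm'(x) = -\sqrt{\mu_k}\bigl(2\sqrt{\mu_k}\,h(x) \pm h'(x)\bigr)$, so every zero of $g_\pm$ is a downward crossing as soon as $|h'/h| < 2\sqrt{\mu_k}$. Since $g_\pm$ passes from $+\infty$ to a negative value over a half nodal interval, the number of such downward crossings must equal $1$, which gives exactly one zero of $g_+$ on the ascending half and one of $g_-$ on the descending half, and similarly exactly one zero of the appropriate $g_\pm$ on a boundary nodal interval.

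The main obstacle is the transversality check, which requires $|h'/h| < 2\sqrt{\mu_k}$ and may fail near endpoints of $(0,1)$ where $h$ can vanish and $h'/h$ blow up. On any internal nodal interval, compactness keeps $h$ bounded below and $h'$ bounded via the Lipschitz regularity of concave functions, and one uses that $\mu_k(h^*) \geq \mu_k(1) = (k\pi)^2$ to absorb the ratio $|h'/h|$ for $k \geq 2$ (note that for $k=1$ there are no internal nodal intervals, so the claim is vacuous there). On boundary nodal intervals where $h^*$ vanishes at the endpoint, one falls back on the explicit Bessel structure $u(x) = J_0(\sqrt{\mu_k}x)$ on the initial affine piece $h^*(x) = cx$, where the monotonicity properties of $J_0$ and $J_1$ on $(0,j_{0,1})$ directly yield a unique zero of $f$. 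The concavity of $h^*$, which controls the sign of $2\sqrt{\mu_k}\,h \pm h'$ through monotonicity of the slope, is the structural ingredient that makes the transversality argument run uniformly.
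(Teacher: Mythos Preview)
Your Pr\"ufer substitution is a reasonable alternative packaging, and the transversality computation $g_\pm'=-\sqrt{\mu_k}\bigl(2\sqrt{\mu_k}\,h\pm h'\bigr)$ at a zero of $g_\pm$ is correct. The lower-bound part via sign comparison is fine. But your handling of the main obstacle has two genuine gaps.

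\textbf{Internal intervals.} On the descending half $(c,b)$ of a nodal interval you need $h'<2\sqrt{\mu_k}\,h$ at every zero of $g_-$ (on the ascending half $(a,c)$ the condition $g_+'<0$ is automatic since $h'\ge 0$ there, and this is essentially the paper's one-line argument that $f'<0$ on $(a,c)$). Your justification---``compactness keeps $h$ bounded below and $h'$ bounded'' together with $\mu_k(h^*)\ge(k\pi)^2$---is not an argument: for a fixed maximizer $h^*$ and a fixed nodal interval these are three fixed numbers with no a priori relation. The paper does \emph{not} assume such a pointwise bound. It supposes for contradiction that $f$ has three consecutive zeros $x_1<x_2<x_3$ on $(c,b)$, introduces $\varphi=u/u'$ satisfying $\varphi'=1+\tfrac{h'}{h}\varphi+\mu\varphi^2$, and observes that $\varphi'(y_i)=0$ at intermediate points forces $h'/h(y_i)\ge 2\sqrt{\mu}$ (discriminant of the quadratic), while at a local extremum $z_1$ of $f$ one gets $h'/h(z_1)=-2\mu\varphi(z_1)<2\sqrt{\mu}$. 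The contradiction then comes from the \emph{monotonicity} of $h'/h$ (concavity makes $h'$ decreasing and $h$ increasing on the relevant part), not from any pointwise bound. Your final sentence gestures at this monotonicity but never deploys it; a repaired version of your argument would have to show, for instance, that three zeros of $g_-$ force the sign of $2\sqrt{\mu_k}\,h-h'$ to alternate at them, which is incompatible with $h'/h$ being monotone.

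\textbf{Boundary intervals.} Your argument here is circular: you invoke ``the explicit Bessel structure $u(x)=J_0(\sqrt{\mu_k}x)$ on the initial affine piece $h^*(x)=cx$'', but the affine structure of $h^*$ is precisely the conclusion of Theorem~\ref{theok+1}, which Proposition~\ref{propkey} is being used to prove. The paper instead first shows $h^*(0)=0$ by a direct perturbation argument (Lemma~\ref{lhvanish}, using only optimality), then deduces $u'(0)=0$ from the Green-kernel representation of $u$ (valid for any concave $h$), obtaining $f(0)<0$, and finally reruns the interior contradiction argument on $[0,a_1]$.
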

Since $u$ is the $k+1$-th eigenfunction of a Sturm-Liouville problem, its number of nodal domains is exactly $k+1$ ($k-1$ internal plus two boundary nodal intervals).
Following Proposition \ref{propkey}, it implies that the function $f$ has exactly $2k$ zeros. The desired result will follow easily, see below.

\begin{proof}[Proof of Proposition \ref{propkey}]
In the proof, the optimal function $h^*$ will be simply denoted $h$ and the eigenvalue is $\mu$.
Let us consider first an internal nodal interval $[a,b]$ of the eigenfunction $u$. Without loss of generality, we can assume that $u>0$ on $(a,b)$
and we also assume that $(a,b)\subset (0,a_1)$ (i.e. this nodal interval is before the maximum of $h^*$ (where possibly $h^*=1$). For the other case,
$(a,b)\subset [a_2,1)$, the proof will follow the same lines. Therefore, on this interval $h^\prime$ is decreasing (by concavity)
and $h$ is increasing, in particular, $h^\prime/h > 0$. The differential equation satisfied by $u$ can be written
\begin{equation}\label{equau}
u^{\prime\prime}=-\frac{h^\prime}{h} u^\prime -\mu u.
\end{equation}
This shows that $u$ is $W^{2,\infty}_{loc}$ in $(0,1)$. More precisely, $u^{\prime\prime}$ can be discontinuous at discontinuity points of $h^\prime$
(except if $u^\prime$ vanishes at these points).

{\bf 1st step:} $u$ is unimodal on $(a,b)$ in the sense that $u$ is increasing on some interval $(a,c)$ and then decreasing on $(c,b)$.\\
Let us consider a point $c$ where $u$ achieves its maximum on $[a,b]$. We prove now that $u$ is decreasing after $c$.
Let us assume, for a contradiction, that there is a point $x_0\geq c$ with $u^\prime(x_0)>0$. Let us introduce the point
$$x_1:=\max\{x\in [a,x_0], u^\prime(x)\leq 0\}.$$
By continuity, we have $u^\prime(x_1)=0$ and by definition of $x_1$, we have $u^\prime>0$ on $(x_1,x_0]$.
Therefore, using the equation \eqref{equau}, we see that $u^{\prime\prime} <0$ on $[x_1,x_0]$ yielding 
$0<u^\prime(x_0)\leq u^\prime(x_1)=0$ a contradiction.

In the same way, we prove by contradiction that $u$ is increasing on $[a,c]$ assuming that there exists $x_0<c$ such that
$u^\prime(x_0)<0$ and introducing
$$x_1:=\max\{x\in [x_0,c], u^\prime(x)< 0\}$$
we get $u^\prime(x_1)=0$, $u^\prime \geq 0$ on $[x_1,c]$, therefore $u^{\prime\prime} <0$ on $(x_1,c)$ providing a contradiction
since $u^\prime(c)=u^\prime(x_1)=0$.

Another way to express these properties is the following: when $u>0$, the only points where $u^\prime$ vanishes correspond to (local) maxima,
since, by the equation $u^{\prime\prime} <0$, there.

\medskip
Now let us remark that $f(a)={u^\prime(a)}^2\geq 0$, $f(c)=-\mu u^2(c)<0$ and $f(b)={u^\prime(b)}^2\geq 0$, thus $f$ vanishes
at least once between $a$ and $c$ and at least once between $c$ and $b$. Thus we need to prove:\\
{\bf 2nd step:} The function $f$ cannot vanish more
than one time on each interval $[a,c]$ and $[c,b]$. \\
On the first interval $[a,c]$ it is clear: since $u^\prime > 0$ there, we have by \eqref{equau} $u^{\prime\prime} <0$ on $(a,c)$.
Now $f^\prime$ being given by
\begin{equation}\label{fprime}
f^\prime=2u^\prime(u^{\prime\prime}-\mu u)=-2u^\prime(\frac{h^\prime}{h} u^\prime +2\mu u)
\end{equation}
we see that $f^\prime < 0$ on $(a,c)$ and therefore $f$ vanishes only once.

Now we look at the interval $(c,b)$. Since $f(c)<0$ and $f(b)>0$ $f$ vanishes an odd number of time between $c$ and $b$.
Assume, for a contradiction, that $f$ vanishes at (least) three times and take three consecutive zeros $x_1,x_2,x_3$. Writing
$f=(u^\prime+\sqrt{\mu} u)( u^\prime-\sqrt{\mu} u)$ and observing that $u^\prime-\sqrt{\mu} u <0$ on $(c,b)$, we infer that the function
$g=u^\prime+\sqrt{\mu} u$ vanishes at the same points $x_1,x_2,x_3$.
Let us also observe that, since $h$ is positive and increasing while $h^\prime$ is decreasing by concavity, the function $h^\prime/h$ 
is decreasing on $(a,b)$.\\
Let us now introduce the function $\varphi:=u/u^\prime$. By definition, $\varphi <0$ on the interval $(c,b)$. The differential equation satisfied by $\varphi$ is 
\begin{equation}\label{eqvarphi}
\varphi^\prime=1-\frac{u u^{\prime\prime}}{{u^\prime}^2}=1+\frac{h^\prime}{h}\,\varphi +\mu \varphi^2.
\end{equation}
Now, $g(x_i)=0$ implies $\varphi(x_1)=\varphi(x_2)=\varphi(x_3)=-1/\sqrt{\mu}$.  Therefore, there exist two points $y_1 \in (x_1,x_2)$ and $y_2\in (x_2,x_3)$
such that $\varphi^\prime(y_1)=\varphi^\prime(y_2)=0$. Moreover, we can assume that $|\varphi| < 1/\sqrt{\mu}$ on $(x_1,x_2)$
(and $|\varphi| > 1/\sqrt{\mu}$ on $(x_2,x_3)$).
Coming back to \eqref{eqvarphi}, we see that $\varphi(y_1)$ and $\varphi(y_2)$ must be solutions of the quadratic
$$1+\frac{h^\prime}{h}\,X +\mu X^2 =0.$$
This implies that 
$$\frac{h^\prime}{h}\,\left(y_1\right) \geq \frac{h^\prime}{h}\,\left(y_2\right) \geq 2\sqrt{\mu}.$$
Now, let us introduce $z_1\in (x_1,x_2)$ a maximum of $f$ on this interval. From $f^\prime(z_1)=0$, we infer 
$u^{\prime\prime}(z_1)= \mu u(z_1)$ and therefore
$$\varphi^\prime(z_1)= 1-\frac{\mu u^2(z_1)}{{u^\prime}(z_1)^2}=1-\mu \varphi(z_1)^2$$
that yields with \eqref{eqvarphi}
\begin{equation}\label{eqz1}
\frac{h^\prime}{h}\,\left(z_1\right) = -2\mu \varphi(z_1).
\end{equation}
More precisely, we see that 
$$f^\prime \leq 0\ \Leftrightarrow\  \frac{h^\prime}{h}\,\geq -2\mu \varphi$$
now, by \eqref{eqz1} and the fact that $|\varphi|< 1/\sqrt{\mu}$ on $(x_1,x_2)$, we see that $f^\prime(y_1)\leq 0$ thus $y_1\geq z_1$.
We use now the fact that $h^\prime/h$ is decreasing to write
$$2\sqrt{\mu} < \frac{h^\prime}{h}\,\left(y_1\right)  \leq \frac{h^\prime}{h}\,\left(z_1\right) =-2\mu \varphi(z_1) < 2\mu \frac{1}{\sqrt{\mu}}$$
yielding a contradiction.

It remains to look at a nodal interval of the kind $I=[0,a_1]$. For that purpose, we need the following lemma.
\begin{lemma}\label{lhvanish}
let $h_k^*$ be a maximizer for the problem $\max \{\mu_k(h), h\in \mathcal{L}\}$ then $h_k^*(0)=h_k^*(1)=0$. 
\end{lemma}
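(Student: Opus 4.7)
The plan is to argue by contradiction. Assume $h^*(0)>0$ (the case $h^*(1)>0$ is symmetric). The strategy is to exhibit a concave perturbation $\phi$ supported in a small interval near $0$, admissible for $\mathcal{L}$ up to rescaling, for which Lemma \ref{ldermuk} yields a strictly positive directional derivative $\dot\mu_\phi=\int_0^1 f\phi\,dx$, contradicting maximality; here $f=(u')^2-\mu_k(h^*) u^2$ and $u$ is the $k$-th eigenfunction of $h^*$.

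The first step is to pin down the sign of $f$ near $0$. Because $h^*(0)>0$, the boundary condition $h^*(0)u'(0)=0$ in \eqref{vFSLB} forces $u'(0)=0$. Since $h^*$ is concave with $h^*(0)>0$, the weight $h^*$ is uniformly positive on some $[0,\delta]$, so the Sturm--Liouville ODE is regular at $0$, and standard ODE uniqueness rules out $u(0)=0$ (otherwise $u\equiv 0$). Hence $f(0)=-\mu_k(h^*)u(0)^2<0$, and by continuity $f<0$ on an interval $[0,\epsilon_0]$.

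Next, for $\epsilon\in(0,\epsilon_0)$ I would take $\phi(x)=-(\epsilon-x)_+$. Its right derivative drops from $1$ on $(0,\epsilon)$ to $0$ on $(\epsilon,1)$, so $\phi$ is concave and $h^*+t\phi$ remains concave for every $t\geq 0$; the bound $h^*+t\phi\leq 1$ is automatic since $\phi\leq 0$, and for $0<t<h^*(0)/\epsilon$ nonnegativity at the endpoints together with concavity propagates $h^*+t\phi\geq 0$ to all of $[0,1]$. Applying Lemma \ref{ldermuk} then gives
$$\frac{d}{dt}\mu_k(h^*+t\phi)\Big|_{t=0}=-\int_0^\epsilon f(x)(\epsilon-x)\,dx>0,$$
producing a competitor with strictly larger $k$-th eigenvalue and contradicting the maximality of $h^*$.

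The main subtlety I anticipate is the normalization constraint $\max h=1$ in the definition of $\mathcal{L}$: if $h^*$ attains its maximum only inside $[0,\epsilon)$, then $\max(h^*+t\phi)$ drops strictly below $1$, so $h^*+t\phi$ is not literally in $\mathcal{L}$. This is resolved by invoking the scaling invariance $\mu_k(ch)=\mu_k(h)$ noted just after Definition \ref{dMUk}, which allows rescaling the perturbed function back into $\mathcal{L}$ without changing its $k$-th eigenvalue.
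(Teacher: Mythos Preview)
Your proposal is correct and follows essentially the same route as the paper: the same perturbation $\phi(x)=x-\epsilon$ on $[0,\epsilon)$, $0$ elsewhere, combined with $u'(0)=0$ (from the boundary condition since $h^*(0)>0$) and $u(0)\neq 0$ (from ODE uniqueness at a regular endpoint), to force $\dot\mu_\phi>0$. The only cosmetic difference is the order of the dichotomy: the paper first assumes $u(0)\neq 0$ to derive $\dot\mu_\phi>0$, then treats $u(0)=0$ via the Cauchy problem, whereas you invoke the uniqueness argument upfront to exclude $u(0)=0$ and go straight to the perturbation; the paper also handles the constraint $\max h=1$ by the same scaling-invariance remark you give.
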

\begin{proof}
In the proof, the optimal function $h_k^*$ will be simply denoted by $h$, the eigenvalue is $\mu$ and the eigenfunction is $u$. Suppose by contradiction that  $h(0)>0$, let $\eta \in (0,1)$ then we define the following function 
\begin{equation*}
\phi_{\eta}=\begin{cases}
x-\eta \quad x\in [0,\eta) \\
0 \quad x\in [\eta ,1]. 
\end{cases}
\end{equation*}
for $\eta$ small enough we have that the function $h+t\phi_{\eta}$ is in $\mathcal{L}$ for $t$ small enough (we recall that we can assume $h<1$ because $\mu(h)$ is invariant by scaling) in particular we can compute the derivative of the eigenvalue, and we obtain that 
\begin{equation}
\dot{\mu}_{\phi_{\eta}}=\int_0^1 ({u'}^2 - \mu_k u^2)\phi_{\eta} dx=0.
\end{equation}
From the fact that $h(0)>0$ and the boundary conditions we know that $u'(0)=0$. Now assume that $u(0)\neq 0$, so there exists $\eta_1$ for which ${u'}^2 - \mu_k u^2<0$ in $[0,\eta_1]$ and this will imply that 
  \begin{equation}
\dot{\mu}_{\phi_{\eta_1}}=\int_0^1 ({u'}^2 - \mu_k u^2)\phi_{\eta_1} dx>0,
\end{equation}
that is a contradiction, so we conclude that $u(0)=0$. In particular the eigenfunction $u$ must solve the following Cauchy problem 
\begin{equation*}
\begin{cases}
u^{\prime\prime}+\frac{h^\prime}{h} u^\prime +\mu u=0 \quad x\in [0,\eta_1) \\
u(0)=0 \\
u'(0)=0.
\end{cases}
\end{equation*}
this will imply that $u\equiv 0$ in an interval of positive measure, that is a contradiction. We can adapt the same argument in order to prove that $h(1)=0$.
\end{proof}
We come back to the analysis of $f={u^{\prime}}^2-\mu u^2$ on the first nodal interval $[0,a_1]$. Since $h(0)=0$, we can prove that $u^\prime(0)=0$.
Indeed, using the representation of the eigenfunction given in \cite{T65}, namely
$$u(x)=\mu \int_0^1 g(x,y) h(y) u(y) dy\quad \mbox{ where } g(x,y)=\int_0^{\min(x,y)} \frac{t\,dt}{h(t)} +\int_{\max(x,y)}^1 \frac{(1-t)\,dt}{h(t)}$$
we get by differentiation
$$u^\prime(x) = \mu\left(\frac{x}{h(x)} \int_x^1 h(y) u(y) dy - \frac{1-x}{h(x)} \int_0^x h(y) u(y) dy\right).$$
Since $h$ is concave (non identically zero) and $h(0)=0$, we have $h(x)\geq c x$ for some positive $c$ in a neighborhood of $0$, therefore passing to the limit in the expression above, and using
$\int_0^1 h(y) u(y) dy=0$, we get $u^\prime(0)=0$. We infer $f(0)<0$, $f(a_1)>0$ and we can follow exactly the proof of the general case to obtain that $f$ vanishes
only once in the interval $[0,a_1]$. The same holds for the last nodal interval.

\end{proof}
We are now ready to prove Theorem \ref{theok+1}
\begin{proof}[Proof of Theorem \ref{theok+1}]
we have seen that the function $f$ vanishes exactly $2k$ times on $(0,1)$. Moreover, according to Lemma \ref{lemmaxi}, $f$ vanishes at least two times on each internal 
interval contained in the complement of the support of the measure $h^{\prime\prime}$. Therefore, there are at most $k$ such internal intervals 
denoted $(\alpha_i,\beta_i), i=1,\ldots M-1$, $M\leq k+1$ and possibly two boundary
intervals $(0,x_1)$ and $(x_{M},1)$. Let us first prove that the support $S$ is discrete and contains only the extremities of the previous intervals,
namely the $x_i$. Assume, for a contradiction that, for some $i$,  $\beta_i< \alpha_{i+1}$. In that case, the interval $J=[\beta_i, \alpha_{i+1}]$ is contained in the support $S$
and, according to the optimality condition given in Proposition \ref{propoptco}, we have 
$${u^\prime}^2-\mu u^2 =0 \;\mbox{on } J \Longrightarrow u^\prime = \pm \sqrt{\mu} u  \;\mbox{on } J.$$
This implies immediately that $u^{\prime\prime}=\mu u$ on $J$ and coming back to the equation satisfied by the eigenfunction, we infer
$h^\prime/h=\pm 2\sqrt{\mu}$ (on a subinterval $J'$ where $u$ does not vanish), or $h(x)=\exp(2\sqrt{\mu} x)$ on $J'$,
 but this is impossible since we assumed $h$ to be concave. We have proved that the support of $h^{\prime\prime}$ is the discrete set
 $S=\{0,x_1,\ldots, x_M,1\}$ and then $h$ is affine on each interval $(x_i,x_{i+1})$ (we set $x_0=0$ and
 $x_{M+1}=1$). 
 
 Now we are going to prove that the function $f$ vanishes at least once on each boundary interval $(0,x_1)$
 and $(x_M,1)$. For that purpose, we still use the optimality condition given in Proposition \ref{propoptco} on this interval.
 We have now to take into account the non-negative Radon measure $\nu_0$. Since its support is restricted to the two points $0$ and $1$
 (as a consequence of Lemma \ref{lhvanish}), it can be written as a sum of two Dirac measures: $\nu_0=t_0 \delta_0 + t_1 \delta_1$
 with $t_0\geq 0$ and $t_1\geq 0$. Now coming back to the equation \eqref{condopt} satisfied by $\xi$, we get for any $\phi \in H^1(0,1)$:
 $$\int_0^1 f\phi =\int_0^1 \xi^\prime\phi^\prime+t_0\phi(0) +t_1\phi(1)\,.$$
 This implies, choosing a $\phi$ that vanishes on $[x_1,1]$: 
 $$\left\lbrace\begin{array}{l}
 \xi^{\prime\prime}=-f \quad \mbox{in } (0,x_1)\\
  \xi^\prime(0)=t_0 \\
   \xi\prime(x_1)=0 
 \end{array} \right.$$
We already know that $f(0)=-\mu u^2(0) \leq 0$, thus if $f$ does not change sign on $(0,x_1)$, we would have $\xi$ convex,
but this is incompatible with the two boundary conditions since  $\xi^\prime(0)=t_0\geq 0$. Therefore, $f$ must vanish on this interval $(0,x_1)$. The same holds true on $(x_M,1)$. Now if we count the number of zeros of $f$, seen from the side of these intervals, we have
{\it at least} $1+1$ (for each boundary interval $(0,x_1)$ and $(x_M,1)$) and $2(M-1)$ for all internal intervals, that makes
at least $2M$ zeros. But, we know from Proposition \ref{propkey} that the number of zeros of $f$ is exactly $2k$, therefore $M\leq k$
and the number of segments in the graph of $h^*$ is at most $k+1$.
\end{proof} 
In order to prove Theorem \ref{tEFHO} we need to prove two lemmas about the behavior of the Sturm-Liouville eigenfunctions on 
the vertexes of the optimal function $h^*_k$.
\begin{lemma}\label{lOC}
Let $h_k^*$ be a maximizer for the problem $\max \{\mu_k(h), h\in \mathcal{L}\}$, let $u_k$ be the eigenfunction associated to $\mu_k(h_k^*)$, and let $\{x_0,x_1,\ldots,x_m,x_{m+1}\}=suppt(h_k^{*\prime\prime})$ then:
\begin{enumerate}
\item $u_k(x_i)u^\prime(x_i)=0$ for all $i=0,...,m+1$,
\item if $h^{*\prime}_k>0$ in $(x_i,x_{i+1})$ then $u^\prime(x_i)=0$ and if  $h^{*\prime}_k<0$ in $(x_i,x_{i+1})$ then $u^\prime(x_{i+1})=0$
\end{enumerate}
\end{lemma}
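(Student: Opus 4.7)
The plan is to derive both assertions from first-order optimality conditions at the maximizer $h_k^*$, combined with the identity $(h_k^*\,u_ku_k')'=h_k^*f$, where $f:=(u_k')^2-\mu_k u_k^2$. The boundary indices $i=0$ and $i=m+1$ are handled immediately: since Lemma \ref{lhvanish} gives $h_k^*(0)=h_k^*(1)=0$, the Green-kernel representation used in the proof of that lemma also forces $u_k'(0)=u_k'(1)=0$; this proves (1) at the endpoints and simultaneously the boundary instances of (2), because the segment $(0,x_1)$ is necessarily increasing and $(x_m,1)$ decreasing, as $h_k^*\geq 0$ is concave with $h_k^*(0)=h_k^*(1)=0$ and $\max h_k^*=1$.

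At an interior vertex $x_i$, the measure $(h_k^*)''$ carries a strict negative Dirac, and this slack allows two independent two-sided perturbations of $h_k^*$ inside $\mathcal{L}$. The first is the piecewise-linear tent $\psi_i$ with nodes $x_0,\dots,x_{m+1}$ and values $\psi_i(x_j)=\delta_{ij}$, which corresponds to a vertical displacement of the height $y_i=h_k^*(x_i)$. The second is the horizontal perturbation $\tau_i$ obtained by shifting the vertex from $x_i$ to $x_i+\epsilon$ while keeping all other vertex positions and values fixed; a direct pointwise computation yields $\tau_i=-a_{i-1}\psi_i^L-a_i\psi_i^R$, where $\psi_i^L,\psi_i^R$ are the halves of $\psi_i$ on $(x_{i-1},x_i)$ and $(x_i,x_{i+1})$, and $a_j$ denotes the slope of $h_k^*$ on $(x_j,x_{j+1})$. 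By Lemma \ref{ldermuk} both perturbations force $\int_0^1 f\phi\,dx=0$, which in terms of the half-tent integrals
\begin{equation*}
L_i:=\int_{x_{i-1}}^{x_i}\!f(x)\,\frac{x-x_{i-1}}{x_i-x_{i-1}}\,dx,\qquad R_i:=\int_{x_i}^{x_{i+1}}\!f(x)\,\frac{x_{i+1}-x}{x_{i+1}-x_i}\,dx,
\end{equation*}
becomes the linear system $L_i+R_i=0$ and $a_{i-1}L_i+a_iR_i=0$; its determinant is $a_{i-1}-a_i\neq 0$ by strict concavity, so $L_i=R_i=0$ at every interior $i$.

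Expanding these identities and using that $h_k^*$ is affine on each segment with $h_k^*(0)=h_k^*(1)=0$ yields $\int_{x_j}^{x_{j+1}}h_k^*\,f\,dx=0$ on every segment. The identity $(h_k^*u_ku_k')'=h_k^*f$, which follows directly from $(h_k^*u_k')'=-\mu_kh_k^*u_k$, then telescopes from $x=0$ to $x_i$ to give $h_k^*(x_i)u_k(x_i)u_k'(x_i)=h_k^*(0)u_k(0)u_k'(0)=0$; since $h_k^*(x_i)>0$ for interior vertices, assertion (1) follows.

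To upgrade (1) to the sharper assertion (2), I would argue by contradiction on an increasing segment $(x_i,x_{i+1})$, $a_i>0$: if $u_k'(x_i)\neq 0$, then by (1) we have $u_k(x_i)=0$, making $x_i$ a simple nodal point of $u_k$. I would then combine the Bessel form of $u_k$ on the affine segment $(x_i,x_{i+1})$ with the conditions $L_{i+1}=R_{i+1}=0$ at the next vertex and the precise count of zeros of $f$ from Proposition \ref{propkey} to propagate a sign inconsistency; the decreasing case is symmetric. I expect this to be the main obstacle: the two-sided perturbations have already been used up in deriving $L_i=R_i=0$, so distinguishing which factor in $u_k(x_i)u_k'(x_i)=0$ must vanish requires further input, either a finer ODE analysis of the Bessel solution on each affine segment or a one-sided perturbation tailored to the active constraint $h\geq 0$ at $x=0$ and $x=1$.
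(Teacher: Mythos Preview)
Your argument for part (1) is correct and essentially equivalent to the paper's, though packaged differently: the paper cites the optimality conditions \eqref{eOCf} (namely $\int_{x_i}^{x_{i+1}} f\,dx=0$ and $\int_{x_i}^{x_{i+1}} tf\,dt=0$ on each interior segment), which combined with the affinity of $h_k^*$ give $\int_{x_i}^{x_{i+1}} h_k^* f\,dx=0$ directly. Your tent-plus-horizontal-shift perturbations recover exactly the same information, since $R_i=L_{i+1}=0$ for interior $i,i+1$ yields both $\int_{x_i}^{x_{i+1}} f=0$ and $\int_{x_i}^{x_{i+1}} tf\,dt=0$. The telescoping via $(h_k^* u_k u_k')'=h_k^* f$ is then identical.

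For part (2), however, your proposal stops short and the route you sketch (Bessel form, nodal counting, Proposition~\ref{propkey}) is far more than what is needed. The point you are missing is elementary: on an interior increasing segment $(x_i,x_{i+1})$ with $h'>0$ constant, you already have $\int_{x_i}^{x_{i+1}} f\,dx=0$ from $R_i+L_{i+1}=0$. Write $f=(u')^2-\mu u^2$, integrate $\int (u')^2$ by parts, and use $u''=-\tfrac{h'}{h}u'-\mu u$ together with $u(x_i)u'(x_i)=u(x_{i+1})u'(x_{i+1})=0$ from part (1); this gives $\int_{x_i}^{x_{i+1}} \tfrac{uu'}{h}\,dx=0$. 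Integrating \emph{that} by parts yields
\[
\int_{x_i}^{x_{i+1}} \frac{u^2 h'}{h^2}\,dx \;=\; \frac{u^2(x_i)}{h(x_i)}-\frac{u^2(x_{i+1})}{h(x_{i+1})}.
\]
The left-hand side is strictly positive because $h'>0$, so $u(x_i)\neq 0$, and by part (1) this forces $u'(x_i)=0$. The decreasing case is symmetric. This is exactly the paper's argument; no further perturbation or Bessel analysis is required, and the ``obstacle'' you anticipate does not exist.
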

\begin{proof}
In the proof, the optimal function $h_k^*$ will be simply denoted by $h$, the eigenvalue is $\mu$ and the eigenfunction by $u$. From the previous proof, we already know that $u^\prime(x_0)=u^\prime(0)=0$ and the same at the point $x_{m+1}=1$.  Now from the optimality conditions \eqref{eOCf}, and recalling that $h$ must be a piecewise linear function, we obtain that 
\begin{equation*}
\int_{x_i}^{x_{i+1}} ({u^\prime}^2 - \mu_k u^2) h dx=-h(x_{i+1})u(x_{i+1})u^\prime(x_{i+1})+h(x_i)u(x_i)u^\prime(x_i)=0.
\end{equation*}
Now we conclude by induction, indeed from the relations above, starting from $x_0=0$ and $x_1$ we obtain that $u(x_2)u'(x_2)=0$, because $h(x_2)>0$ (we know that $x_2<x_{m+1}=1$). Repeating this argument for all the intervals we conclude that $u(x_i)u'(x_i)=0$ for all $i<m$, this concludes the first part of the proof.

We now prove the second part of the Lemma. We already know that $u'(0)=0$ and $u'(1)=0$. Now we consider the interval $[x_i,x_{i+1}]$ 
with $0<x_i<x_{i+1}<1$ and we assume that $h^\prime>0$ on this interval, so we assume that we are in the increasing part of the function 
$h$. We know that $u(x_i)u^\prime(x_i)=0$, we prove that $u(x_i)\neq 0$. We integrate by parts the optimality condition 
$\int_{x_i}^{x_{i+1}} ({u^\prime}^2 - \mu_k u^2)dx=0$ and we use the relation $u^{\prime\prime}=-\frac{h^\prime}{h} u^\prime -\mu u$ (we recall that $h>0$) in order to obtain the following equality 
\begin{equation}\label{eINTCOND1}
\int_{x_i}^{x_{i+1}}u^\prime\frac{u}{h}dx=0.
\end{equation}
We integrate by parts the relation above and we obtain 
\begin{equation*}
\frac{u^2(x_{i+1})}{h(x_{i+1})}-\frac{u^2(x_i)}{h(x_i)}-\int_{x_i}^{x_{i+1}}u\frac{u'h-uh'}{h^2}dx=0,
\end{equation*}
using \eqref{eINTCOND1} we obtain that 
\begin{equation}\label{eINTCOND2}
\int_{x_i}^{x_{i+1}}u^2\frac{h'}{h^2}dx=\frac{u^2(x_i)}{h(x_i)}-\frac{u^2(x_{i+1})}{h(x_{i+1})},
\end{equation}
we know that $h'>0$ in $[x_i,x_{i+1}]$ so we conclude that $u(x_i)$ must be different from zero. We can adapt the same proof in the decreasing part of $h$, in this case we will have that $h'<0$ and we conclude that $u(x_{i+1})$ must be different from zero.
\end{proof}

\begin{lemma}\label{lOC2}
Let $h_k^*$ be a maximizer for the problem $\max \{\mu_k(h), h\in \mathcal{L}\}$, let $u_k$ be the eigenfunction associated to $\mu_k(h_k^*)$, and let $\{x_0,x_1,\ldots,x_m,x_{m+1}\}=suppt(h_k^{*\prime\prime})$ then $u_k(x_i)=0$ for all $i=1,...,m$. 
\end{lemma}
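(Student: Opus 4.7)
The plan is to combine the vertex condition $u_k(x_i) u_k'(x_i) = 0$ of Lemma \ref{lOC}(1) with a one-sided concavity-preserving perturbation of $h_k^*$ at the interior vertex $x_i$, which will force $f(x_i) \geq 0$ where $f := u_k'^2 - \mu_k u_k^2$; these two facts together will leave no option but $u_k(x_i) = 0$.

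Fix $i \in \{1, \dots, m\}$, so that $0 < x_i < 1$, and pick $\epsilon > 0$ small enough that $(x_i - \epsilon, x_i + \epsilon)$ meets no other vertex of $h_k^*$. Let $\phi_\epsilon(x) = \max\{0, 1 - |x-x_i|/\epsilon\}$ be the standard tent function centered at $x_i$, and set $h_t := h_k^* - t\phi_\epsilon$ for $t \geq 0$. I will argue that $h_t$ is concave and non-negative for $0 \leq t \leq t_0(\epsilon)$ with $t_0(\epsilon) > 0$, so that, up to rescaling, it is an admissible competitor. Non-negativity is immediate for small $t$ since $h_k^*(x_i) > 0$ by Lemma \ref{lhvanish} and concavity. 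For concavity, let $a_L > a_R$ denote the left and right slopes of $h_k^*$ at $x_i$: the perturbation $h_t$ is piecewise affine on $(x_i - \epsilon, x_i + \epsilon)$ with slopes $a_L - t/\epsilon$ and $a_R + t/\epsilon$ on the two halves, extended by the unchanged outer slopes $a_L$ and $a_R$. The non-increasing slope condition at the three kinks $x_i - \epsilon$, $x_i$, $x_i + \epsilon$ reduces to $t \geq 0$ at the two outer ones (where $h_k^*$ is affine) and $t \leq \epsilon(a_L - a_R)/2$ at $x_i$, which determines a nonempty interval of admissible $t$.

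Since $h_k^*$ maximizes $\mu_k$ and the eigenvalue is scale invariant, the one-sided directional derivative at $t = 0^+$ must be non-positive. Combining this with the derivative formula of Lemma \ref{ldermuk} yields
\[
0 \geq \left.\frac{d^+}{dt}\mu_k(h_t)\right|_{t=0} = -\int_0^1 f(x)\,\phi_\epsilon(x)\, dx,
\]
so $\int_0^1 f \phi_\epsilon\, dx \geq 0$ for every admissible $\epsilon$. Because $u_k \in C^1([0,1])$, the function $f$ is continuous on $[0,1]$, and the standard approximate-identity calculation $\epsilon^{-1} \int_0^1 f \phi_\epsilon\, dx \to f(x_i)$ as $\epsilon \to 0^+$ gives $f(x_i) \geq 0$.

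To conclude, by Lemma \ref{lOC}(1) we have $u_k(x_i) u_k'(x_i) = 0$. If $u_k(x_i) \neq 0$, then necessarily $u_k'(x_i) = 0$, whence $f(x_i) = -\mu_k u_k(x_i)^2 < 0$, contradicting $f(x_i) \geq 0$; therefore $u_k(x_i) = 0$. I expect the only delicate point in the plan to be the concavity check: the downward tent $-t\phi_\epsilon$ creates a convex kink at $x_i$, which is precisely absorbed by the strict concave kink $a_L > a_R$ of $h_k^*$ (a property guaranteed by the piecewise linear structure obtained in Theorem \ref{theok+1}), while the two new outer kinks are concave and combine safely with the affine behavior of $h_k^*$ on either side.
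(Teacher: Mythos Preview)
Your proof is correct and follows essentially the same approach as the paper's: both exploit a one-sided, concavity-preserving, non-positive perturbation of $h_k^*$ near the interior vertex $x_i$ to derive a sign condition on $f={u_k'}^2-\mu_k u_k^2$ that is incompatible with $u_k(x_i)\neq 0$ once Lemma~\ref{lOC}(1) is invoked. The only cosmetic difference is the order of the argument: the paper first assumes $u_k(x_i)\neq 0$, obtains $f<0$ on a whole neighborhood, and then perturbs, whereas you perturb first, pass to the limit $\epsilon\to 0$ to get $f(x_i)\geq 0$, and then derive the contradiction; your explicit tent construction and concavity check make transparent the step that the paper leaves implicit.
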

\begin{proof}
In the proof, the optimal function $h_k^*$ will be simply denoted by $h$, the eigenvalue is $\mu$ and the eigenfunction by $u$. Suppose by contradiction that $u(x_i)\neq 0$, from the point $1$ in Lemma \ref{lOC} we know that $u'(x_i)=0$. This means that there exists an interval $[a_i,b_i]$ s. t. ${u^\prime}^2-\mu u(x)<0$ inside the interval. 

 We choose a function $\phi\leq 0$ s. t. $Suppt(\phi)\subset [a_i,b_i]$ and $h+t\phi \in \mathcal{L}$, from Lemma \ref{dermuk} we obtain that 
\begin{equation*}
\dot{\mu}_{\phi}=\int_0^1 ({u'}^2 - \mu_k u^2) \phi dx=\int_{a_i}^{b_i} ({u'}^2 - \mu_k u^2) \phi dx>0
\end{equation*}
that is a contradiction with the assumption that $h$ is a maximizer.
\end{proof}
We are now ready to prove Theorem \ref{tEFHO}
\begin{proof}[Proof of Theorem \ref{tEFHO}]
In the proof, the optimal function $h_k^*$ will be simply denoted by $h$, the eigenvalue is $\mu$ and the eigenfunction by $u$. The existence follow directly from Proposition \ref{propcoco}. We start by analyzing the case $k=1$, in this case, from Theorem \ref{theok+1}, we know that $h$ can be or a constant function or a function that has a graph given by $2$ segments. From Lemma \ref{lhvanish} we now that $h$ cannot be constant, otherwise it must be constantly equal to zero. We have that $h$ must be of the following form   
\begin{equation*}
h=\begin{cases}
\frac{x}{x_a} \quad &x\in [0,x_a], \\
\frac{1-x}{1-x_a} \quad &x\in [x_a,1].
\end{cases}
\end{equation*}
We introduce the following parameter $w$ that depends on $x_a$ such that $w^2=\mu$. The explicit expression of the eigenfunction $u$ in the interval $[0,x_a]$ and in the interval $[x_a,1]$ is known (see \cite{HM21}). In particular there exist two constants $A_1$ and $B_1$ such that the eigenfunction $u$ must have the following form 
\begin{equation}\label{eEIGEN1}
u=\begin{cases}
A_1J_0(w(x_a)x) \quad &x\in [0,x_a] \\
B_1J_0(w(x_a)(1-x)) \quad &x\in [x_a,1],
\end{cases}
\end{equation}
where $J_0$ is the Bessel function with parameter $0$. From Lemma \ref{lOC2} we know that $w$ and $x_a$ must be linked by the following conditions
\begin{align*}
wx_a&=j_{0,m_1}\quad  m_1\in \mathbb{N}\\
w(1-x_a)&=j_{0,m_2}\quad  m_2\in \mathbb{N}\\
w&=j_{0,m_1}+j_{0,m_2},
\end{align*}
where $j_{0,m}$ is the $m$-th zero of the Bessel function $J_0$. We have that $u(x_a)=0$ and with the expression of $u$
given in \eqref{eEIGEN1} we see that $u$ vanishes at least $m_1-1$ times in $(0,x_a)$ and at least $m_2-1$ times in $(x_a,1)$.
The only possibility, in order not to contradict Courant nodal domain Theorem  is  $m_1=1$ and $m_2=1$. We obtain the following
\begin{equation*}
w=2j_{0,1},
\end{equation*}
and this gives that $x_a=\frac{1}{2}$, this concludes the case $k=1$.

We study the case where $k\geq 2$. From Theorem \ref{theok+1} we know that $h$  has a graph that is a polygonal line composed of (at most) $k+1$ segments. From the two lemmas \ref{lOC} and \ref{lOC2} we can conclude that $h$ must have the graph that is a polygonal line 
composed of (at most) $3$ segments. Indeed if we have more than $3$ segments this will imply that there exists a segment that is not the plateau  and it is neither the first segment nor the last. Combining the second point of Lemma \ref{lOC} and Lemma \ref{lOC2} we 
conclude that there exists a point $x_i$ s. t. $u(x_i)=u'(x_i)=0$ and this is a contradiction with the fact that $u$ is a Sturm-Liouville eigenfunction. 

This means that the graph of $h$ is composed by $2$ segments or $2$ segments plus a central plateau, we start by analyzing this second case. Let $x_a<x_b$ we know that $h$ must be of the following form    
\begin{equation*}
h=\begin{cases}
\frac{x}{x_a} \quad &x\in [0,x_a] \\
1 \quad &x\in [x_a,x_b]\\
\frac{(1-x)}{1-x_b} \quad &x\in [x_b,1].
\end{cases}
\end{equation*}
In order to simplify the notation we will define $a=x_a$ and $b=1-x_b$.

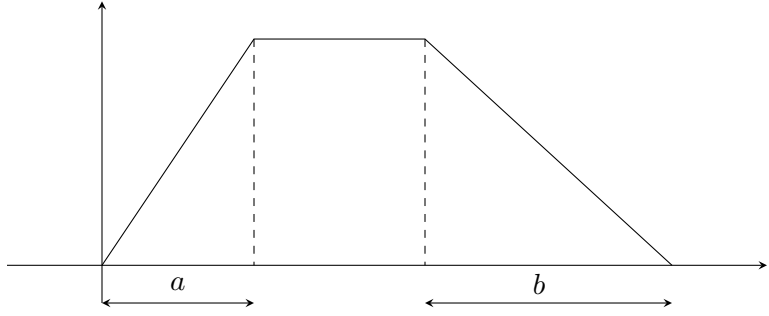
\begin{figure}[H]
\centering
\tdplotsetmaincoords{0}{0}
\begin{tikzpicture}[tdplot_main_coords, scale=2.5]
\draw (0,0,0)--(0.8,1.2,0)--(1.7,1.2,0)--(3,0,0);
\draw [-stealth](-0.5,0,0)--(3.5,0,0);
\draw [-stealth](0,-0.2,0)--(0,1.4,0);
\draw [dashed] (0.8,1.2,0)--(0.8,0,0);
\draw [dashed] (1.7,1.2,0)--(1.7,0,0);
\draw [stealth-stealth](0,-0.2,0)--(0.8,-0.2,0);
\draw [stealth-stealth](1.7,-0.2,0)--(3,-0.2,0);
\draw (0.4,-0.1,0) node {$a$};
\draw (2.3,-0.1,0) node {$b$};
\end{tikzpicture}
\caption{Shape of the function $h$, with respect to the quantities $a$ and $b$.}
\end{figure}
We introduce the following parameter $w=w(a,b)$ that depends on $a$ and $b$ and such that $w^2=\mu$. The idea of the proof is to give an implicit formula for the parameter $w$ and then, using the optimality conditions combined with an analysis of the nodal domains of the eigenfunctions $u$, we will find that only one triplet of values for $w$, $a$ and $b$ is possible.\\
The explicit expression of the eigenfunction $u$ in each interval $[0,a]$, $[a,b]$ and $[b,1]$ is known (we recover the Bessel equation on $(0,a)$ and $(b,1)$
see \cite{HM21}). Let $J_{\alpha}$ be the Bessel function with parameter $\alpha$, we know that there exist four constants $A_1$, $B_1$, $B_2$ and $C_1$ such that the eigenfunction $u$ has the following form 
\begin{equation}\label{eEIGEN2}
u=\begin{cases}
A_1J_0(wx) \quad &x\in [0,x_a] \\
B_1\cos(wx)+B_2\sin(wx) \quad &x\in [x_a,x_b]\\
C_1J_0(w(1-x)) \quad &x\in [x_b,1].
\end{cases}
\end{equation}
In order to find an implicit formula for $w$ we proceed in a classical way, we know  that the eigenfunction is $C^1$, we write the compatibility condition
and this gives a homogeneous linear system that the constants $A_1$, $B_1$, $B_2$ and $C_1$ must satisfy. The parameters $w$ are exactly the parameters for which the homogeneous linear system has a solution, so are the parameters for which the determinant of the homogeneous linear system is zero. After a straightforward computation (recalling that $J_0'=-J_1$) we find the following transcendental equation that $w$ must solve
\begin{align}\label{eTEW}
\tan(w(1-a-b)))=\frac{J_1(wa)J_0(wb)+J_1(wb)J_0(wa)}{J_1(wa)J_1(wb)-J_0(wb)J_0(wa)}.
\end{align} 
From Lemma \ref{lOC2} we conclude that 
\begin{align*}
wa&=j_{0,m_1}\quad  m_1\in \mathbb{N}\\
wb&=j_{0,m_2}\quad  m_2\in \mathbb{N},
\end{align*}
where $j_{0,m}$ is the $m$-th zero of the Bessel function $J_0$. In this case we have $m_1$ nodal interval in the interval $[0,x_a]$ and $m_2$ nodal intervals in the interval $[x_b,1]$. From the transcendental equation \eqref{eTEW} we obtain an explicit condition that $w$ must satisfies, indeed we have that $\tan(w-j_{0,m_1}-j_{0,m_2})=0$, so we obtain
\begin{equation*}
w=j_{0,m_1}+j_{0,m_2}+l\pi \quad  l\in \mathbb{Z}.
\end{equation*}
If $l<0$ then $a+b>1$ and this is not possible. If $l=0$ we have that $a+b=1$, but this is a contradiction with the fact that $x_a<x_b$. We finally obtain that 
\begin{equation*}
w=j_{0,m_1}+j_{0,m_2}+l\pi \quad  l\in \mathbb{N}\setminus \{0\}.
\end{equation*}
We analyze the behavior of the eigenfunction in the interval $[x_a,x_b]$, using the fact that $u(x_a)=u(x_b)=0$ and the formula \eqref{eEIGEN2} we obtain that 
\begin{equation*}
u(x)=\frac{B_2}{\cos(j_{0,m_1})}\big ( \sin(wx-j_{0,m_1}) \big ).
\end{equation*}
It is clear that $wx_a-j_{0,m_1}=0$ and $wx_b-j_{0,m_1}=l\pi$, this means that the eigenfunction $u$ has $l$ nodal domains in the interval $(x_a,x_b)$. We finally have that the eigenfunction $u$ has $m_1$ nodal interval in the interval $[0,x_a]$, $m_2$ nodal intervals in the interval $[x_b,1]$ and $l$ nodal domains in the interval $(x_a,x_b)$, from Courant Theorem on nodal intervals we have that $u$ has exactly $k+1$ nodal interval (Sturm-Liouville eigenfunctions are Courant sharp) so we conclude that 
\begin{equation*}
m_1+m_2+l=k+1.
\end{equation*}
From the relation above we obtain the following expression for the parameter $w$,
\begin{equation*}
w=j_{0,m_1}+j_{0,m_2}+(k+1-m_1-m_2)\pi.
\end{equation*}
We know that two consecutive zeros of the Bessel function $J_0$ always have a difference less then $\pi$ (see Lemma \ref{lzerobessel}), we conclude that the optimal $m_1$ and $m_2$ in order to maximize $w$ are $m_1=m_2=1$ and we obtain the following competitor  
\begin{equation*}
w_1=2j_{0,1}+(k-1)\pi.
\end{equation*}
We now study the case when $x_a=x_b$, in this case, using the same argument we used in order to prove the case $k=1$, we conclude that the parameter $w$ is given by the following formula 
\begin{equation*}
w=j_{0,m_1}+j_{0,m_2},\text{ with } m_1+m_2=k+1,
\end{equation*}
From the fact that two consecutive zeros of Bessel functions have a difference less then $\pi$ (see Lemma \ref{lzerobessel}), we conclude that for all $m_1$ and $m_2$ s. t. $m_1+m_2=k+1$ the following inequality holds 
\begin{equation*}
w=j_{0,m_1}+j_{0,m_2}\leq 2j_{0,1}+(k-1)\pi.
\end{equation*}
this concludes the proof.
\end{proof}

\section{Maximization of $\mu_k(h^{\alpha})$}\label{section4}
In this section we study the following maximization problem:
\begin{equation*}
\sup \{\mu_k(h^{\alpha}), h\in \mathcal{L}\}, 
\end{equation*}
with $\alpha\geq1$. In Section \ref{section3} we solved the maximization problem $\sup \{\mu_k(g), g\in \mathcal{L}\}$, that means 
with the concavity constraint on the function $g$.

In the case $\alpha=1$ we deeply use the fact that $\forall h\in \mathcal{L}$  the Sturm-Liouville problem $\mu_k(h)$ is a well defined eigenvalue problem (see Remark \ref{rmkwelldefined}), thanks to the existence of the eigenfunctions we were able to write the optimality conditions and to conclude the proof.

The big difference between the case $\alpha=1$ and the case $\alpha>1$ is the fact that, if $\alpha>1$ it is not clear if there exists an eigenfunction for the relaxed Sturm-Liouville eigenvalue $\mu_k(h^{\alpha})$. Indeed the Green Kernel associated to the Sturm-Liouville equation under consideration is the following:
\begin{equation*}
g(h;x,y)=\big (\int^{\min(x,y)}_0\frac{t}{h(t)}dt+\int_{\max(x,y)}^1\frac{1-t}{h(t)}dt\big )h(y).
\end{equation*}
We used the property that $g(h;x,y)\in L^2([0,1]\times [0,1])$ if and only if there exist $K>0$ and $p<2$ such that $h(x)\geq K(x(1-x))^p$ a. e. in $(0,1)$. If $h\in \mathcal{L}$ it is true that $g(h;x,y)\in L^2([0,1]\times [0,1])$, but if $\alpha>1$ then $g(h^{\alpha};x,y)\notin L^2([0,1]\times [0,1])$. 
This means that, in order to solve the problem $\sup \{\mu_k(h^{\alpha}), h\in \mathcal{L}\}$, we cannot directly proceed as we did for the case $\alpha=1$ . More precisely,  we have deeply used the fact that the eigenfunctions exist and also some continuity properties of $\mu_k(h)$ (see \cite{HM21}).

Let us explain the two main steps useful to solve the problem $\sup \{\mu_k(h^{\alpha}), h\in \mathcal{L}\}$:
\begin{enumerate}
\item In the first step, thanks to the introduction of an auxiliary problem, we prove that there exists a maximizer $\overline{h}$ and this maximizer has a graph that is a polygonal line composed with at most $(k+1)-$segments. Thanks to this we prove that there exists an eigenfunction for the eigenvalues $\mu_k(\overline{h}^{\alpha})$ and we give a first characterization of the eigenfunction.
\item We write the optimality conditions for the maximizer $\mu_k(\overline{h}^{\alpha})$, and we deeply use the fact that, every variation $\overline{h}+t\phi$ that we consider, if $t$ is small enough, will be a concave piecewise affine function. In particular $\mu_k((\overline{h}+t\phi)^{\alpha})$ admits an eigenfunction for $t$ small enough.  
\end{enumerate} 
The main result that we prove in this section is the following theorem: 
\begin{theorem}\label{thmax}
Let $\alpha\geq 1$ then there exists a solution $\overline{h}_{\alpha,k}$ of the following problem 
\begin{equation*}
\sup \{\mu_k(h^{\alpha}), h\in \mathcal{L}\},
\end{equation*}
and moreover

\begin{outline}
\1 if $\alpha<2$ then:
\2 $\max \{\mu_1(h^\alpha), h\in \mathcal{L}\}=\mu_1(\overline{h}_{\alpha,1}^\alpha)=(2j_{\frac{\alpha -1}{2}})^2$ and 
\begin{equation*}
\overline{h}_{\alpha,1}=\begin{cases}
2x \quad &x\in [0,\frac{1}{2}], \\
2(1-x) \quad &x\in [\frac{1}{2},1]
\end{cases}
\end{equation*}
\2 let $k\geq 2$ then $\max \{\mu_k(h), h\in \mathcal{L}\}=\mu_k(\overline{h}_{\alpha,k}^\alpha)=(2j_{\frac{\alpha -1}{2}}+(k-1)\pi)^2$ 
\begin{equation*}
\overline{h}_{\alpha,k}=\begin{cases}
\frac{x(2j_{\frac{\alpha -1}{2}}+(k-1)\pi)}{j_{\frac{\alpha -1}{2}}} \quad &x\in [0,\frac{j_{\frac{\alpha -1}{2}}}{(2j_{\frac{\alpha -1}{2}}+(k-1)\pi)}] \\
1 \quad &x\in [\frac{j_{\frac{\alpha -1}{2}}}{(2j_{\frac{\alpha -1}{2}}+(k-1)\pi)},1-\frac{j_{\frac{\alpha -1}{2}}}{(2j_{\frac{\alpha -1}{2}}+(k-1)\pi)}]\\
\frac{(1-x)(2j_{\frac{\alpha -1}{2}}+(k-1)\pi)}{j_{\frac{\alpha -1}{2}}} \quad &x\in [1-\frac{j_{\frac{\alpha -1}{2}}}{(2j_{\frac{\alpha -1}{2}}+(k-1)\pi)},1]
\end{cases},
\end{equation*}
\1 if $\alpha=2$ then: $\max \{\mu_1(h^2), h\in \mathcal{L}\}=\mu_1(\overline{h}_{2,1}^2)=((k+1)\pi)^2$. The maximizer $\overline{h}_{2,1}$ is given by any function such that $\overline{h}_{2,1}(0)=\overline{h}_{2,1}(1)=0$ and the graph of $\overline{h}_{2,1}$ is given by at most $k+1$ segments.   
\1 If $\alpha>2$ then:
\2 If $k$ is odd then $\mu_k(\overline{h}_{\alpha,k}^{\alpha})=4j_{\frac{\alpha-1}{2},\frac{k+1}{2}}^2$ and 
\begin{equation*}
\overline{h}_{\alpha,k}=\begin{cases}
2x \quad &x\in [0,\frac{1}{2}] \\
2(1-x) \quad &x\in [\frac{1}{2},1]
\end{cases}
\end{equation*}
\2 If $k$ is even then $\mu_k(\overline{h}_{\alpha,k}^{\alpha})=(j_{\frac{\alpha-1}{2},\frac{k}{2}}+j_{\frac{\alpha-1}{2},\frac{k+2}{2}})^2$ 
\begin{equation*}
\overline{h}_{\alpha,k}=\begin{cases}
\frac{(j_{\frac{\alpha-1}{2},\frac{k}{2}}+j_{\frac{\alpha-1}{2},\frac{k+2}{2}})}{j_{\frac{\alpha-1}{2},\frac{k}{2}}}x \quad &x\in [0,\frac{j_{\frac{\alpha-1}{2},\frac{k}{2}}}{(j_{\frac{\alpha-1}{2},\frac{k}{2}}+j_{\frac{\alpha-1}{2},\frac{k+2}{2}})}] \\

\frac{(j_{\frac{\alpha-1}{2},\frac{k}{2}}+j_{\frac{\alpha-1}{2},\frac{k+2}{2}})}{j_{\frac{\alpha-1}{2},\frac{k}{2}}}(1-x) \quad &x\in [\frac{j_{\frac{\alpha-1}{2},\frac{k}{2}}}{(j_{\frac{\alpha-1}{2},\frac{k}{2}}+j_{\frac{\alpha-1}{2},\frac{k+2}{2}})},1]
\end{cases}
\end{equation*}
\end{outline}
\end{theorem}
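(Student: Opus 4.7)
The plan is to follow the two-step strategy outlined at the beginning of this section: first build a maximizing sequence of profiles for which Sturm-Liouville eigenfunctions provably exist (so that the variational machinery of Section \ref{section3} applies), then pass to the limit and perform an explicit Bessel-function calculation on the resulting polygonal maximizer.

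For the first step, fix $\epsilon>0$ and restrict to the auxiliary class $\mathcal{L}^\epsilon=\{h\in\mathcal{L}:h\ge\epsilon\}$. On $\mathcal{L}^\epsilon$ one has $h^\alpha\ge\epsilon^\alpha$, so the Green kernel of the associated Sturm-Liouville operator is bounded on $[0,1]^2$; eigenfunctions exist and Proposition \ref{propcoco} remains valid. A repetition of the perturbation argument of Lemma \ref{ldermuk} for $h\mapsto\mu_k(h^\alpha)$ gives $\dot\mu_\phi=\alpha\int_0^1((u')^2-\mu u^2)\,h^{\alpha-1}\phi\,dx$. Setting $f:=((u')^2-\mu u^2)h^{\alpha-1}$, the unimodality and quadratic-equation analysis of Proposition \ref{propkey} transfers verbatim because on any affine piece of $h$ the ratio $(h^\alpha)'/h^\alpha=\alpha h'/h$ keeps exactly the same sign and monotonicity properties as in the case $\alpha=1$. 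Combined with the optimality formalism of Proposition \ref{propoptco} for the concavity constraint imposed on $h$ itself, this forces any maximizer $h_\epsilon$ on $\mathcal{L}^\epsilon$ to have a polygonal graph of at most $k+1$ segments and (as in Lemmas \ref{lOC}--\ref{lOC2}) the eigenfunction $u_\epsilon$ to vanish at every interior vertex. Letting $\epsilon\to 0$, Proposition \ref{propcoco} and the fact that $\mathcal{L}^\epsilon$ exhausts $\mathcal{L}$ produce a subsequential limit $h_\epsilon\to\overline h$ which maximizes $\mu_k(\cdot^\alpha)$ on $\mathcal{L}$ and inherits the polygonal structure.

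It remains to compute $\overline h$ explicitly. Because $\overline h$ is piecewise affine, the Sturm-Liouville problem is solvable by quadrature: on a linear piece $\overline h(x)=c(x-a)$ the substitution $y=x-a$ turns $-(h^\alpha u')'=\mu h^\alpha u$ into the Bessel equation of parameter $\nu=(\alpha-1)/2$, with regular solution proportional to $J_\nu(\sqrt{\mu}\,y)$; on a central plateau the eigenfunction is a combination of $\sin$ and $\cos$ at frequency $\sqrt{\mu}$. The vanishing of $u$ at each interior vertex together with $C^1$ matching imposes $\sqrt{\mu}\,\ell_i=j_{\nu,m_i}$ on every Bessel segment of length $\ell_i$ and $\sqrt{\mu}\,\ell_p=m_p\pi$ on a plateau of length $\ell_p$. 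Summing the lengths and using Courant's nodal count yields
\[
\sqrt{\mu}=j_{\nu,m_1}+m_p\pi+j_{\nu,m_2},\qquad m_1+m_p+m_2=k+1,
\]
so the optimum over the $(m_1,m_p,m_2)$-choices is governed by the sign of $j_{\nu,m+1}-j_{\nu,m}-\pi$: this quantity is negative for $\nu<1/2$, vanishes at $\nu=1/2$, and is positive for $\nu>1/2$. In the regime $\alpha<2$ the optimum is $m_1=m_2=1$, $m_p=k-1$, giving the three-segment profile with eigenvalue $(2j_\nu+(k-1)\pi)^2$; in the regime $\alpha>2$ the optimum has $m_p=0$ and distributes $k+1$ as evenly as possible between $m_1$ and $m_2$, yielding the two-segment tent with the two formulas of the statement; at $\alpha=2$ all distributions give the same value $((k+1)\pi)^2$. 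The most delicate point is the first step: one must check that the nodal and optimality arguments of Section \ref{section3}, developed when the concavity of $h^\alpha$ coincides with that of $h$, continue to apply on $\mathcal{L}^\epsilon$ when $h^\alpha$ is no longer concave, and that the limit $\epsilon\to 0$ preserves both the polygonal structure and the value of the eigenvalue.
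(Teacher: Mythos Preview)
Your overall strategy matches the paper's: restrict to $\mathcal{L}^\epsilon=\{h\in\mathcal{L}:h\ge\epsilon\}$, run the Section~\ref{section3} machinery to get a polygonal maximizer $h_\epsilon$, then let $\epsilon\to0$ and compute explicitly via Bessel functions. However, two steps are not actually justified in your write-up, and the paper devotes real work to each.

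\textbf{The limit $\epsilon\to0$.} You invoke Proposition~\ref{propcoco} to pass from $h_\epsilon$ to a maximizer $\overline h$ on $\mathcal{L}$. But the continuity statement in Proposition~\ref{propcoco} relies on the Green kernel of the limiting Sturm--Liouville operator lying in $L^2((0,1)^2)$, and the paper explicitly observes that this \emph{fails} for $\overline h^\alpha$ once $\alpha\ge2$ and $\overline h$ vanishes at the endpoints (which it does). So you cannot simply cite Proposition~\ref{propcoco}. The paper instead proves an ad hoc continuity lemma (Lemma~\ref{lemcsl}: $\mu_k(h+\epsilon)\to\mu_k(h)$ for fixed $h$) together with the upper semicontinuity Lemma~\ref{lDEE}, and combines these in a short contradiction argument (end of Lemma~\ref{lfirstcar}) to show that the limiting polygonal $\overline h$ is in fact a maximizer on all of $\mathcal{L}$. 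You correctly identify this passage as ``the most delicate point'' but do not resolve it; the resolution is not routine.

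\textbf{The reduction to at most three segments.} Your explicit computation assumes the maximizer has the shape ``one Bessel leg / plateau / one Bessel leg'', and writes the regular solution on each leg as a multiple of $J_\nu$ only. But the polygonal structure from Section~\ref{section3} gives up to $k+1$ segments; on an \emph{internal} non-plateau segment (where $h>0$ at both endpoints) the solution is a combination of $J_\nu$ and $Y_\nu$, and your formula $\sqrt{\mu}\,\ell_i=j_{\nu,m_i}$ no longer holds. The paper rules this situation out (Lemmas~\ref{lOCk} and~\ref{lbound3}): an integration by parts of the optimality condition $\int_{x_i}^{x_{i+1}}((u')^2-\mu u^2)h^{\alpha-1}=0$ yields
\[
\tfrac12 u^2 h^{\alpha-2}\Big|_{x_i}^{x_{i+1}}=\tfrac{\alpha-2}{2}\int_{x_i}^{x_{i+1}}u^2 h' h^{\alpha-3}\,dx,
\]
whose sign depends on whether $\alpha\gtrless2$. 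Combined with $u(x_i)=0$ at each vertex, this forces a point where both $u$ and $u'$ vanish unless there are at most three segments. Note that the direction of the conclusion in Lemma~\ref{lOC} (point 2) \emph{reverses} when $\alpha>2$; your statement that the Section~\ref{section3} argument ``transfers verbatim'' glosses over this sign change. Once the three-segment structure is established, your Bessel computation and the appeal to the monotonicity of the gaps $j_{\nu,m+1}-j_{\nu,m}$ (Lemma~\ref{lzerobessel}) are correct and match the paper.
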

The proof of this result proceed in several steps. We start by stating the results that are useful in order to prove the existence. We first recall the following Lemma, that is a particular case of Lemma $4$ in \cite{BMO22}
\begin{lemma}[Bucur-Martinet-Oudet \cite{BMO22}]\label{lDEE}
Assume that $h,h_n\in L^{\infty}(0,1)$  satisfies $h_n \overset{\ast}{\rightharpoonup} h$ in $L^{\infty}(0,1)$ then for all $k\geq 1$
\begin{equation*}
\limsup_{n\to \infty} \mu_k(h_n)\leq \mu_k(h).
\end{equation*}
\end{lemma}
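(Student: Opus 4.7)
The plan is to establish upper semicontinuity via the variational characterization \eqref{vfslh} by constructing, for each $n$, a $k$-dimensional test subspace admissible for $\mu_k(h_n)$ whose Rayleigh quotient converges to that of a near-optimal subspace for $\mu_k(h)$. I may assume $h\not\equiv 0$, otherwise the inequality is trivial. Fix $\varepsilon>0$ and pick a $k$-dimensional subspace $E_k\subset H^1(0,1)$, $L^2$-orthogonal to $h$, with basis $v_1,\dots,v_k$, satisfying
\[
\max_{0\neq u\in E_k}\frac{\int_0^1 (u')^2 h\,dx}{\int_0^1 u^2 h\,dx}\leq \mu_k(h)+\varepsilon.
\]

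To obtain a subspace admissible for $\mu_k(h_n)$, shift the basis by constants: set
\[
c_i^n:=\frac{\int_0^1 v_i\,h_n\,dx}{\int_0^1 h_n\,dx},\qquad w_i^n:=v_i-c_i^n,
\]
and define $F_k^n:=\mathrm{span}(w_1^n,\dots,w_k^n)$. Testing the convergence $h_n\overset{\ast}{\rightharpoonup} h$ against the $L^1$ functions $v_i$ and $1$ gives $c_i^n\to 0$, since $\int v_i h=0$ and $\int h>0$. By construction $F_k^n$ is $L^2$-orthogonal to $h_n$; it is $k$-dimensional for all $n$ large, because a non-trivial linear relation $\sum a_i w_i^n=0$ would force $\sum a_i v_i$ to be a constant, and $L^2$-orthogonality of $E_k$ to $h$ combined with $\int h>0$ makes this constant vanish, contradicting linear independence of the $v_i$.

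The key analytical step is uniform convergence, on the unit sphere $S^{k-1}$ of coefficient vectors $a=(a_1,\dots,a_k)$, of
\[
R_n(a):=\frac{\int_0^1 (u_a')^2\, h_n\,dx}{\int_0^1 (u_a-s_a^n)^2\, h_n\,dx}\ \longrightarrow\ R(a):=\frac{\int_0^1 (u_a')^2\, h\,dx}{\int_0^1 u_a^2\, h\,dx},
\]
where $u_a:=\sum_i a_i v_i$ and $s_a^n:=\sum_i a_i c_i^n\to 0$ uniformly on $S^{k-1}$. Pointwise convergence is immediate from $h_n\overset{\ast}{\rightharpoonup} h$ tested against the $L^1$ functions $(u_a')^2$ and $u_a^2$, and the small correction in the denominator due to $s_a^n$ is controlled using the uniform $L^\infty$ bound on $(h_n)$ (automatic by Banach--Steinhaus). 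Uniformity in $a$ follows from a standard $\varepsilon$-net argument on the compact sphere $S^{k-1}$: the maps $a\mapsto (u_a')^2$ and $a\mapsto u_a^2$ are continuous from $S^{k-1}$ into $L^1(0,1)$ because they are quadratic polynomials in $a$ with $L^1$ coefficients. The denominator of $R$ is bounded below uniformly on $S^{k-1}$, since if $\int u_{a^*}^2 h=0$ for some $a^*\in S^{k-1}$, then $R(a^*)=+\infty$, contradicting the $\varepsilon$-approximation of $\mu_k(h)$.

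Combining these ingredients, for $n$ large,
\[
\mu_k(h_n)\leq \sup_{|a|=1} R_n(a)\leq \sup_{|a|=1} R(a)+\varepsilon\leq \mu_k(h)+2\varepsilon,
\]
and letting $\varepsilon\to 0$ yields $\limsup_n \mu_k(h_n)\leq \mu_k(h)$. The main technical obstacle I foresee is the uniform control of the Rayleigh quotient over $S^{k-1}$: it requires simultaneously the non-degeneracy of the $h$-weighted inner product on $E_k$ (handled by the finiteness of the supremum on $E_k$) and the $\varepsilon$-net transfer from pointwise to uniform convergence (handled by the uniform $L^\infty$ bound on $(h_n)$). Once these are in place, the whole argument reduces to a routine variational comparison.
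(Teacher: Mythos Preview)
The paper does not give its own proof of this lemma; it is quoted as a particular case of Lemma~4 in \cite{BMO22}, so there is no in-paper argument to compare against. Your approach---fixing a near-optimal test subspace $E_k$ for $\mu_k(h)$, correcting each basis vector by a constant to make it $h_n$-orthogonal, and passing to the limit in the Rayleigh quotient uniformly on the coefficient sphere---is the standard and correct route to upper semicontinuity of min--max eigenvalues under weak-$\ast$ convergence of the density. The key observations (that $c_i^n\to 0$ because $v_i,1\in L^1$ are legitimate test functions for the weak-$\ast$ convergence, that $\|h_n\|_{L^\infty}$ is uniformly bounded by Banach--Steinhaus, and that $a\mapsto(u_a')^2,\,u_a^2$ are continuous into $L^1$ so an $\varepsilon$-net gives uniform convergence) are all sound.

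One small point you glossed over: when you argue that $\inf_{|a|=1}\int u_a^2\,h>0$ because otherwise $R(a^*)=+\infty$, you implicitly assume the numerator $\int(u_{a^*}')^2 h$ is positive. If both vanish, $R(a^*)$ is indeterminate rather than infinite. The clean fix is to note that since $\mu_k(h)<+\infty$ (this follows from Lemma~\ref{lUBMUH}), one may from the outset choose the approximating subspace $E_k$ so that the weighted $L^2$-form $u\mapsto\int u^2 h$ is positive definite on it; any degenerate direction lies in the radical of both quadratic forms and can be replaced without increasing the sup. With this adjustment the argument goes through as written.
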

We recall that in Section \ref{section3} we use a continuity result for $\mu_k(h)$ when $h_n\rightarrow h$ in $L^2(0,1)$ proved in \cite{HM21}. We notice that this result strongly use the fact that $g(h;x,y)\in L^2([0,1]\times [0,1])$, this means that we cannot use this result for 
$\alpha >1 $. We prove a continuity result for the relaxed eigenvalues $\mu_k(h)$.
\begin{lemma}\label{lemcsl}
Let $h\in L^{\infty}(0,1)$ be a non negative function such that $\mu_k(h)<\infty$ and let $\epsilon_n$ be a sequence such that $\epsilon_n\rightarrow 0$ then 
\begin{equation*}
\lim_{n\to \infty} \mu_k(h+\epsilon_n)=\mu_k(h)
\end{equation*}
\end{lemma}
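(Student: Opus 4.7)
The plan is to establish the two one-sided inequalities $\limsup_{n\to\infty}\mu_k(h+\epsilon_n)\le\mu_k(h)$ and $\liminf_{n\to\infty}\mu_k(h+\epsilon_n)\ge\mu_k(h)$ separately, which together yield the claim. The upper semicontinuity is immediate: the strong convergence $h+\epsilon_n\to h$ in $L^\infty(0,1)$ implies in particular $h+\epsilon_n\overset{\ast}{\rightharpoonup}h$, and applying Lemma \ref{lDEE} gives $\limsup_{n\to\infty}\mu_k(h+\epsilon_n)\le\mu_k(h)$ with no further work.

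For the lower semicontinuity I will exploit that $h+\epsilon_n\ge\epsilon_n>0$ makes the Sturm--Liouville problem associated to $h+\epsilon_n$ classical, so that $\mu_k(h+\epsilon_n)$ is actually attained. Let $v_1^n,\ldots,v_k^n$ be eigenfunctions, orthonormal in $L^2_{h+\epsilon_n}$ and orthogonal to constants with respect to the weight $h+\epsilon_n$, and set $E^n=\mathrm{span}\{v_1^n,\ldots,v_k^n\}$, so that $\sup_{u\in E^n}R_{h+\epsilon_n}(u)=\mu_k(h+\epsilon_n)$. I will convert $E^n$ into a $k$-dimensional subspace $\tilde E^n\subset H^1(0,1)$ that is $L^2$-orthogonal to $h$ by translating each $u\in E^n$ by the constant $c_n(u)=\int_0^1 uh\,dx\big/\int_0^1 h\,dx$; using the identity $\int_0^1 u(h+\epsilon_n)\,dx=0$ this reduces to $c_n(u)=-\epsilon_n\int_0^1 u\,dx/\int_0^1 h\,dx$. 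The map $u\mapsto\tilde u:=u-c_n(u)$ is injective on $E^n$ (any constant in $E^n$ would have to be $L^2_{h+\epsilon_n}$-orthogonal to constants, hence zero), so $\dim\tilde E^n=k$.

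A direct computation yields
\[
\int_0^1(\tilde u')^2 h\,dx=\int_0^1(u')^2(h+\epsilon_n)\,dx-\epsilon_n\int_0^1(u')^2\,dx,
\]
\[
\int_0^1 \tilde u^2 h\,dx=\int_0^1 u^2(h+\epsilon_n)\,dx-\epsilon_n\int_0^1 u^2\,dx-\frac{\epsilon_n^2\big(\int_0^1 u\,dx\big)^2}{\int_0^1 h\,dx}.
\]
Normalizing $\int u^2(h+\epsilon_n)\,dx=1$, the target is to conclude that the numerator is $\mu_k(h+\epsilon_n)+o(1)$ and the denominator is $1+o(1)$, uniformly over $u\in E^n$; this will give $\mu_k(h)\le\sup_{\tilde u\in\tilde E^n}R_h(\tilde u)\le\mu_k(h+\epsilon_n)+o(1)$ and hence the required liminf bound.

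The main obstacle, and the step I expect to be hardest, is the uniform control of the unweighted quantities $\epsilon_n\int_0^1(v_i^n)^2\,dx$ and $\epsilon_n\int_0^1((v_i^n)')^2\,dx$: from the normalization these are a priori only bounded by $1$ and $\mu_k(h+\epsilon_n)$ respectively, which does not suffice to neglect them in the denominator. The strategy I would follow is to show that the eigenfunctions $v_i^n$ cannot asymptotically concentrate on the degenerate set $\{h=0\}$, so that in fact both quantities tend to $0$. The natural route is a compactness/contradiction argument exploiting the hypothesis $\mu_k(h)<\infty$: if a subsequence of $v_i^n$ were concentrating mass on $\{h=0\}$, extracting a weak limit (after appropriate rescaling by $\sqrt{\epsilon_n}$) and inserting it into the variational definition of $\mu_k(h)$ would contradict either the finiteness of $\mu_k(h)$ or the already established bound $\mu_k(h+\epsilon_n)\le\mu_k(h)+o(1)$. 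Once these estimates are in hand, the Rayleigh quotient computation closes and the inequality $\liminf_{n\to\infty}\mu_k(h+\epsilon_n)\ge\mu_k(h)$ follows.
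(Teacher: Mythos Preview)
Your upper semicontinuity argument via Lemma \ref{lDEE} matches the paper exactly. The lower semicontinuity is where the difficulty lies, and you have correctly identified both the obstacle (the unweighted quantity $\epsilon_n\int (v_i^n)^2$ need not vanish) and the relevant rescaling (by $\sqrt{\epsilon_n}$). However, your proposed resolution --- ruling out concentration on $\{h=0\}$ by contradiction --- does not go through. Concentration genuinely can occur: take for instance $h=\chi_{[a,1]}$ with $0<a<1$; then $h+\epsilon_n$ has low eigenvalues whose eigenfunctions live primarily on $[0,a]$, and for these $\epsilon_n\int_0^1(v_i^n)^2\to c>0$. Extracting the weak $H^1$ limit $U_i$ of $\sqrt{\epsilon_n}\,v_i^n$ yields a nontrivial $H^1$ function supported on $\{h=0\}$, and this contradicts neither $\mu_k(h)<\infty$ nor the $\limsup$ bound --- it simply witnesses that $\{h=0\}$ has positive measure. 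So the contradiction you are hoping for is not available, and with it your control of the denominator term $\epsilon_n\int u^2$ collapses.

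The paper's approach is not to rule out concentration but to \emph{accept} it. One splits the indices into a set $K_1$ (those $j$ with $\epsilon_n\int(v_j^n)^2\to c_j>0$) and its complement $K_2$ (those $l$ with $\epsilon_n\int(v_l^n)^2\to 0$). For $j\in K_1$ one replaces $v_j^n$ by the rescaled weak limit $U_j$ of $\sqrt{\epsilon_n}\,v_j^n$; for $l\in K_2$ one keeps $v_l^n$ itself. The paper then checks, using the orthogonality $\int v_i^n v_j^n(h+\epsilon_n)=0$, that the mixed family $\{U_j\}_{j\in K_1}\cup\{v_l^n\}_{l\in K_2}$ still spans a $k$-dimensional space, and uses it as a test space for $\mu_k(h)$. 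The $K_1$ contributions to both numerator and denominator of the $(h+\epsilon_n)$-Rayleigh quotient are of order $\epsilon_n$ and drop out in the limit, while the $K_2$ contributions behave exactly as in your computation. In short: your translation trick is essentially correct for the $K_2$ eigenfunctions, but the $K_1$ eigenfunctions require the rescaled-limit replacement instead.
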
  
Despite the simplicity of the statement the proof of this lemma is a little bit technical. Indeed we have that the eigenfunctions $u_{i,\epsilon}$ corresponding to $\mu_k(h+\epsilon_n)$ may present a blow up phenomenon that must be taken under control. 
\begin{proof}[Proof of Lemma \ref{lemcsl}]
We simply call the sequence $\epsilon_n$ by $\epsilon$. 
According to Lemma \ref{lDEE}, we already know that the sequence $\mu_k(h+\epsilon)$ is bounded.
Let $u_{i,\epsilon}$ be the eigenfunction associated to $\mu_i(h+\epsilon)$ with $i\leq k$, normalized by
\begin{equation*}
\int_0^1u_{i,\epsilon}^2(h+\epsilon)dx=1.
\end{equation*}
Note that we also have
\begin{align*}
\int_0^1u_{j,\epsilon}'^2(h+\epsilon)dx&=\mu_k(h+\epsilon)\\
\int_0^1u_{j,\epsilon}(h+\epsilon)dx&=0.\\
\end{align*}
So we have that the sequence $\sqrt{\epsilon}u_{j,\epsilon}$ is a bounded sequence in $H^1(0,1)$ thus 
there exists $U_j\in H^1(0,1)$ such that
\begin{align*}
\sqrt{\epsilon}u_{j,\epsilon}&\rightharpoonup U_j\quad \text{in}\quad H^1(0,1)\\
\sqrt{\epsilon}u_{j,\epsilon}&\rightarrow U_j\quad, \text{in}\quad L^2(0,1) .\\
\end{align*}
In order to proceed in the proof we need to distinguish between two different class of eigenfunctions:
\begin{itemize}
\item We call $K_1$ the set of indices $j$ for which 
\begin{equation*}
\epsilon \int_0^1u_{j,\epsilon}^2dx\rightarrow c>0.
\end{equation*}
In that case, $\int_0^1U^2_j=c>0$ and $\int_0^1U_jh=0$ so $U_j$ is not a constant function. In particular we have the 
following inequalities 
\begin{align*}
\int_0^1(\sqrt{\epsilon}u_{j,\epsilon}')^2(h+\epsilon)dx&\geq \int_0^1U'^2_jhdx+O(\epsilon)\\
\int_0^1(\sqrt{\epsilon}u_{j,\epsilon})^2(h+\epsilon)dx&= \int_0^1U^2_jhdx+O(\epsilon),\\
\end{align*}
where the first inequality follow from the lower semicontinuity of the functional with respect the weak convergence and the second equality is due to the strong $L^2$ convergence. We define the new function $v_{j,\epsilon}=\sqrt{\epsilon}u_{j,\epsilon}$, that 
is still an eigenfunction for the eigenvalue $\mu_k(h+\epsilon)$.

\item We call $K_2=\{j+1,\ldots, k$ the set of indices $l$ for which:
\begin{equation*}
\epsilon \int_0^1u_{l,\epsilon}^2dx\rightarrow 0.
\end{equation*}
In that case $U_l=0$ but for $\epsilon$ small enough, we have the following inequalities
\begin{align*}
\int_0^1(u_{l,\epsilon}')^2(h+\epsilon)dx&\geq \int_0^1(u_{l,\epsilon}')^2dx\\
\int_0^1(u_{l,\epsilon})^2(h+\epsilon)dx&= \int_0^1(u_{l,\epsilon})^2hdx+O(\epsilon).\\
\end{align*}
\end{itemize}
We now prove that for $\epsilon$ small enough the following space $Span[U_0,...,U_j,u_{j+1,\epsilon},...,u_{k,\epsilon}]$ is of dimension $k+1$. The set of functions $U_i$ with $i\in K_1$ are orthogonal, it remains to check that for $\epsilon$ small enough $U_i\notin Span[u_{j+1,\epsilon},...,u_{k,\epsilon}]$. Suppose this is not true so there exists $\overline{\epsilon}$ such that $U_i\in Span[u_{j+1,\epsilon},...,u_{k,\epsilon}]$ for all $\epsilon<\overline{\epsilon}$. In particular there exists $\alpha_{j+1,\epsilon},..., \alpha_{k,\epsilon}$ such that 
\begin{equation*}
U_i=\sum_{m=j+1}^k \alpha_{m,\epsilon}u_{m,\epsilon}
\end{equation*}

and there exists an index $l$ such that $\alpha_{l,\epsilon}\rightarrow c\neq 0$. So 
\begin{equation}\label{eqcontrcont}
\lim_{\epsilon\to 0}\int_0^1U_iu_{l,\epsilon}(h+\epsilon)dx\rightarrow c\neq 0.
\end{equation} 
Now we recall that $\int_0^1v_{i,\epsilon}u_{l,\epsilon}(h+\epsilon)=0$ for all $\epsilon$ so in particular:
\begin{align*}
|\int_0^1U_iu_{l,\epsilon}(h+\epsilon)dx|&\leq\int_0^1|U_iu_{l,\epsilon}(h+\epsilon)-v_{i,\epsilon}u_{l,\epsilon}(h+\epsilon)|dx\\ 
&\leq ||v_{i,\epsilon}-U_i||_{L^2}^{\frac{1}{2}}|u_{l,\epsilon}(h+\epsilon)||_{L^2}^{\frac{1}{2}}\rightarrow 0,
\end{align*}
That is a contradiction with \eqref{eqcontrcont}, so we proved that the space $Span[U_0,...,U_j,u_{j+1\epsilon},...,u_{k,\epsilon}]$ is of dimension $k+1$.

We now conclude the proof, indeed from the inequalities above, for $\epsilon$ small enough we have that 
\begin{align*}
\mu_k(h+\epsilon)&=\max_{\beta\in \mathbb{R}^{k+1}}\frac{\sum_{i\in K_1}\beta_i^2\int_0^1v_{i,\epsilon}'^2(h+\epsilon)dx+\sum_{l\in K_2}\beta_l^2\int_0^1u_{l,\epsilon}'^2(h+\epsilon)dx}{\sum_{i\in K_1}\beta_i^2\int_0^1v_{i,\epsilon}^2(h+\epsilon)dx+\sum_{l\in K_2}\beta_l^2\int_0^1u_{l,\epsilon}^2(h+\epsilon)dx}\\
&\geq \sup_{w\in Span[U_0,...,U_j,u_{j+1\epsilon},...,u_{k,\epsilon}]} \frac{\int_0^1w'^2hdx}{\int_0^1w^2hdx}+O(\epsilon)\\
&\geq \mu_k(h)+O(\epsilon).
\end{align*}
We finally proved that $\liminf \mu_k(h+\epsilon)\geq \mu_k(h)$, the uppersemicontinuity follow from Lemma \ref{lDEE}.
\end{proof}
We now prove the existence of a maximizer and we give a first characterization for the maximizer $\overline{h}_k$.
\begin{lemma}\label{lfirstcar}
The following maximization problem 
\begin{equation*}
\sup \{\mu_k(h^{\alpha}), h\in \mathcal{L}\},
\end{equation*}
has a solution $\overline{h}_{\alpha,k}$ that is a concave piece-wise affine function with at most $(k+1)$ affine pieces such that $\overline{h}_{\alpha,k}(0)=\overline{h}_{\alpha,k}(1)=0$. and defined as in \eqref{defhaff} below. Moreover the function
\begin{equation*}
\overline{u}_{\alpha,k}=
\begin{cases}
x^{\frac{1-\alpha}{2}}A_1J_{\frac{\alpha-1}{2}}(wx)\quad x\in [0,x_1] \\
\vdots\\
(x+\frac{b_{i-1}}{a_{i-1}})^{\frac{1-\alpha}{2}}\big [A_iJ_{\frac{\alpha-1}{2}}\big (w(x+\frac{b_{i-1}}{a_{i-1}})\big)+B_{i-1}Y_{\frac{\alpha-1}{2}}\big (w(x+\frac{b_{i-1}}{a_{i-1}})\big)\big ]\quad x\in [x_{i-1},x_i]\\
\vdots\\
A_j\cos(wx)+B_{j-1}\sin(wx)\quad x\in [x_{j-1},x_j]\\
\vdots\\
(1-x+\frac{b_{l-1}}{a_{l-1}})^{\frac{1-\alpha}{2}}\big [A_lJ_{\frac{\alpha-1}{2}}\big (w(1-x+\frac{b_{l-1}}{a_{l-1}})\big)+B_{l-1}Y_{\frac{\alpha-1}{2}}\big (w(1-x+\frac{b_{l-1}}{a_{l-1}})\big)\big ]\
\qquad x\in [x_{l-1},x_l] \\
\vdots\\
(1-x)^{\frac{1-\alpha}{2}}A_{k+1}J_{\frac{\alpha-1}{2}}(w(1-x))\quad x\in [x_k,1],
\end{cases}
\end{equation*} 
is the eigenfunction associated to the eigenvalue $\mu_k(\overline{h}_{\alpha,k}^{\alpha})$. Here $J_\alpha$ and $Y_\alpha$ are the Bessel function of the first and second kind with index $\alpha$, $w^2=\mu_k(\overline{h}_{\alpha,k}^{\alpha})$ and $\{A_i\}_{i=1}^{k+1}$, $\{B_i,\}_{i=1}^{k-1}$ are constants. 
\end{lemma}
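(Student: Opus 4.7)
The fundamental obstruction, highlighted at the start of the section, is that for $\alpha>1$ the Green kernel $g(h^\alpha;x,y)$ need not lie in $L^2((0,1)^2)$, so the relaxed Sturm--Liouville problem has no classical eigenfunction in general and the technique of Section \ref{section3} does not apply verbatim. The plan is therefore to introduce the regularised auxiliary problem
$$M_k(\epsilon) := \sup\{\mu_k((h+\epsilon)^\alpha) \,:\, h\in\mathcal{L}\}$$
for $\epsilon>0$. Since $(h+\epsilon)^\alpha \geq \epsilon^\alpha > 0$, Remark \ref{rmkwelldefined} guarantees the existence of a genuine eigenfunction, and the machinery of Section \ref{section3} becomes available.

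For each fixed $\epsilon>0$, I first obtain a maximizer $h_\epsilon\in\mathcal{L}$ via Proposition \ref{propcoco} combined with Lemma \ref{lDEE}, and then establish its qualitative structure by mirroring Section \ref{section3}: compute the directional derivative of $h\mapsto\mu_k((h+\epsilon)^\alpha)$ (which differs from Lemma \ref{ldermuk} only by the positive pointwise factor $\alpha(h+\epsilon)^{\alpha-1}$), write the first-order condition in the form of Proposition \ref{propoptco}, and count the zeros of $f_\epsilon := (u_\epsilon')^2-\mu_k u_\epsilon^2$ on the nodal intervals of $u_\epsilon$. The analysis of Proposition \ref{propkey} carries over because it depends only on the monotonicity of $h'/h$ on a monotone branch of $h$ (which concavity guarantees) and on the sign of $f_\epsilon$, not on the exponent $\alpha$; I therefore conclude that $h_\epsilon$ is piecewise affine with at most $k+1$ segments.

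Next I let $\epsilon\to 0$. Proposition \ref{propcoco} yields a subsequence converging to some $\overline{h}_{\alpha,k}\in\mathcal{L}$ in $L^2$ and uniformly on compacts of $(0,1)$; the piecewise affine structure with at most $k+1$ pieces is preserved, since in the limit vertices can at worst coalesce. Combining Lemma \ref{lemcsl} (which handles the $+\epsilon$ shift) with Lemma \ref{lDEE} (upper semicontinuity of $\mu_k$ under $L^\infty$ weak-$*$ convergence) identifies $\overline{h}_{\alpha,k}$ as a maximizer of the original problem. The boundary vanishing $\overline{h}_{\alpha,k}(0)=\overline{h}_{\alpha,k}(1)=0$ follows by applying the variational argument of Lemma \ref{lhvanish} at the auxiliary level, where the derivative formula is fully legitimate, and then passing to the limit.

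Finally, to derive the explicit formula for $\overline{u}_{\alpha,k}$, on any affine piece $h(x)=a_i x+b_i$ the change of variable $y=x+b_i/a_i$ followed by the ansatz $u=y^{(1-\alpha)/2}v$ converts the equation $-(h^\alpha u')'=\mu h^\alpha u$ into Bessel's equation of index $(\alpha-1)/2$, whose general solution is an arbitrary linear combination of $J_{(\alpha-1)/2}$ and $Y_{(\alpha-1)/2}$; on the two extremal pieces (where $b_i=0$) the singularity of $Y_{(\alpha-1)/2}$ at the origin would produce a non-$H^1$ function, so its coefficient must vanish and only the $J$-term survives. On any plateau piece where $h$ is constant the equation reduces to $-u''=\mu u$, giving a combination of $\cos(wx)$ and $\sin(wx)$. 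Continuity of $u$ and of the flux $h^\alpha u'$ at the interior vertices, together with the natural conditions at $0$ and $1$, fix all coefficients up to a global scalar. The main obstacle in the whole scheme is this last step: one has to justify \emph{a posteriori} that the candidate built from Bessel functions actually realises the infimum of Definition \ref{dMUk} even when the Green kernel is not $L^2$. This is ultimately delivered by the asymptotic $J_{(\alpha-1)/2}(z)\sim c\, z^{(\alpha-1)/2}$ near $0$, which exactly cancels the singular prefactor $x^{(1-\alpha)/2}$ at the endpoints and produces an admissible $H^1$ test function.
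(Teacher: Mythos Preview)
Your approach is essentially the paper's: introduce an $\epsilon$-regularised auxiliary problem so that eigenfunctions exist, run the Section~\ref{section3} machinery (derivative formula with the extra factor $\alpha(h+\epsilon)^{\alpha-1}$, Proposition~\ref{propkey}, Lemma~\ref{lhvanish}) to force the $\epsilon$-maximizer to be piecewise affine with at most $k+1$ pieces, pass to the limit via Lemmas~\ref{lDEE} and~\ref{lemcsl}, and read off the Bessel form on each affine piece with the $Y$-term killed at the endpoints. The only cosmetic difference is that the paper works in $\mathcal{L}_\epsilon=\{h\in\mathcal{L}:h\geq\epsilon\}$ and maximizes $\mu_k(h^\alpha)$, whereas you keep $h\in\mathcal{L}$ and maximize $\mu_k((h+\epsilon)^\alpha)$; note, however, that Lemma~\ref{lemcsl} is literally stated for $g+\epsilon\to g$, so your invocation for $(f+\epsilon)^\alpha\to f^\alpha$ requires the (easy) observation that its proof only uses positivity of the perturbation and uniform convergence to zero.
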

In order to prove this result, as we already pointed out, we cannot directly use the same techniques as in Section \ref{section3}, 
because in general we don't have existence of eigenfunctions. For this reason we proceed via an approximation argument. We construct a maximizing sequence and we identify what is the limit of this maximizing sequence, proving in this way the existence and the 
qualitative property of the maximizer. The maximizing sequence is constructed by solving an auxiliary maximization problem for which we can use the techniques developed in Section \ref{section3}. 
\begin{proof}[Proof of Lemma \ref{lfirstcar}]
In order to simplify the notation we will not write the subscripts $\alpha$ and $k$. 
Let $\epsilon>0$ we introduce the following space of functions 
\begin{equation*}
\mathcal{L}_{\epsilon}=\{h\in \mathcal{L} \,,\,\,h\geq \epsilon\}.
\end{equation*}
We introduce an auxiliary maximization problem that is $\sup \{\mu(h^{\alpha}), h\in \mathcal{L}_{\epsilon}\}$. Now for any 
$h\in\mathcal{L}_{\epsilon}$ there exists an eigenfunction associated to the eigenvalue $\mu(h^{\alpha})$, because $h\geq \epsilon$.
For this reason we can now adapt the results we present in Section \ref{section3}. The existence of a maximizer in $\mathcal{L}_{\epsilon}$, that we denote by $\overline{h}_{\epsilon}$, follow by an easy adaptation of Proposition \ref{propcoco}. It is straightforward to check that the derivative of $\mu(h^{\alpha})$ in the direction of $\phi$ is given by 
\begin{equation}\label{dermukd}
\dot{\mu}_{\phi}(h^{\alpha})=\alpha\int_0^1 ({u'}^2 - \mu(h^{\alpha}) u^2)h^{\alpha-1}\phi dx.
\end{equation}
We have that the role of the function $f={u'}^2 - \mu u^2$ is now played by the new function $f_{\alpha}=\alpha ({u'}^2 - \mu(h^{\alpha}) u^2)h^{\alpha-1}$. We need to study the nodal interval of the function $f_{\alpha}$, but the function $\alpha h^{\alpha-1}$ is positive, ,
 so we reduce another time to the study of the nodal interval of the function ${u'}^2 - \mu(h^{\alpha}) u^2$. By a straightforward adaptation of Theorem \ref{theok+1} and Proposition \ref{lhvanish} we have that $\overline{h}_{\epsilon}$ is a concave function that has a graph that is a polygonal line of at most $(k+1)$ segments and such that $\overline{h}_{\epsilon}(0)=\overline{h}_{\epsilon}(1)=\epsilon$. In particular there exist positive parameters $\{a_{i,\epsilon}\}_{i=1}^{k+1}$, $\{b_{i,\epsilon}\}_{i=1}^{k-1}$ and $x_{i,\epsilon}\in [0,1]$ with $i=1,...,k$ such that:
\begin{equation*}
\overline{h}_{\epsilon}=\begin{cases}
a_{1,\epsilon}x+\epsilon \quad x\in [0,x_{1,\epsilon}]\\
\vdots\\
a_{i,\epsilon}x+b_{i-1,\epsilon}\quad x\in [x_{i-1,\epsilon},x_{i,\epsilon}]\\
\vdots\\
1 \quad x\in [x_{j-1,\epsilon},x_{j,\epsilon}]\\
\vdots\\
a_{l,\epsilon}(1-x)+b_{l-1,\epsilon}\quad x\in [x_{l-1,\epsilon},x_{l,\epsilon}]\\
\vdots\\
a_{k+1,\epsilon}(1-x)+\epsilon \quad x\in [x_{k,\epsilon},1].
\end{cases}
\end{equation*}
Sending $\epsilon\rightarrow 0$, for the expression above we can conclude that, up to some subsequence 
$\overline{h}_{\epsilon}\overset{\ast}{\rightharpoonup} \overline{h}$ in $L^{\infty}(0,1)$, where $\overline{h}$ is given by 
\begin{equation}\label{defhaff}
\overline{h}=\begin{cases}
a_{1}x \quad x\in [0,x_{1}]\\
\vdots\\
a_{i}x+b_{i-1}\quad x\in [x_{i-1},x_{i}]\\
\vdots\\
1 \quad x\in [x_{j-1},x_{j}]\\
\vdots\\
a_{l}(1-x)+b_{l-1}\quad x\in [x_{l-1},x_{l}]\\
\vdots\\
a_{k+1}(1-x) \quad x\in [x_{k},1].
\end{cases}
\end{equation}
Let $a\neq 0$, $w\neq 0$ and $b\in \mathbb{R}$ a direct computation shows that the solution of the following equation:
\begin{equation*}
-\frac{d}{dx}\big((ax+b)^{\alpha}\frac{dg}{dx}(x)\big)=w^2(ax+b)^{\alpha}g(x)
\end{equation*}
is given by 
\begin{equation*}
g(x)=(x+\frac{b}{a})^{\frac{1-\alpha}{2}}\big [AJ_{\frac{\alpha-1}{2}}\big (w(x+\frac{b}{a})\big)+BY_{\frac{\alpha-1}{2}}\big (w(x+\frac{b}{a})\big)\big ],
\end{equation*}
where $A$ and $B$ are two constants and  $J_\alpha$, $Y_\alpha$ are the Bessel function of the first and second kind with index $\alpha$. Thanks to this observation we can give an explicit expression for the eigenfunction $\overline{u}$, we only write the expression in the first and the last interval:
\begin{equation*}
\overline{u}=\begin{cases}
(x)^{\frac{1-\alpha}{2}}\big [A_1J_{\frac{\alpha-1}{2}}\big (wx\big)+B_1Y_{\frac{\alpha-1}{2}}\big (wx)\big)\big ] \quad x\in[0,x_1]\\
\vdots\\
(1-x)^{\frac{1-\alpha}{2}}\big [A_{k+1}J_{\frac{\alpha-1}{2}}\big (w(1-x)\big)+B_{k+1}Y_{\frac{\alpha-1}{2}}\big (w(1-x))\big)\big ] \quad x\in[x_k,1].\\
\end{cases}
\end{equation*}
We now prove that $B_1=B_{k+1}=0$, indeed if $B_1\neq 0$ then $\lim_{x\to 0}\overline{h}^{\alpha}\overline{u}\neq 0$ that is in contradiction with the boundary condition, the same argument works for $B_{k+1}$.

In order to conclude the proof we need to prove that $\overline{h}$ is the maximizer for the problem $\sup \{\mu_k(h^{\alpha}), h\in \mathcal{L}\}$. Suppose by contradiction that there exists a function $f\in \mathcal{L}$ such that $\mu_k(f^{\alpha})>\mu_k(\overline{h}^{\alpha})$. We consider the sequence of functions $f^{\alpha}+\epsilon$, then we have that $\mu_k(f^{\alpha}+\epsilon)\leq\mu_k(\overline{h}^{\alpha}_{\epsilon})$. Passing to the limit and using Lemmas \ref{lDEE} and \ref{lemcsl} we obtain that $\mu_k(f^{\alpha})\leq \mu_k(\overline{h}^{\alpha})$ that is a contradiction. 
\end{proof}
Now we want to give the exact expression of the function $\overline{h}_{\alpha,k}$ in order to prove Theorem \ref{thmax}. For that we need to distinguish the case $\alpha\neq 2$ and $\alpha=2$. We give the analogous in this setting of Lemmas \ref{lOC} and \ref{lOC2}
\begin{lemma}\label{lOCk}
Let $\overline{h}_{\alpha,k}$ be a maximizer for the problem $\sup \{\mu_k(h^{\alpha}), h\in \mathcal{L}\}$ let $u_k$ be the eigenfunction associated to $\mu_k(\overline{h}_{\alpha,k}^{\alpha})$ and let $\{x_0,x_1,\ldots,x_m,x_{m+1}\}=suppt(\overline{h}_{\alpha,k}^{\prime\prime})$ then:
\begin{enumerate}
\item $u_k(x_i)u_k^\prime(x_i)=0$ for all $i=0,...,m+1$.
\item Let $\alpha>2$, if $\overline{h}^{\prime}_{\alpha,k}>0$ in $(x_i,x_{i+1})$ then $u^\prime(x_{i+1})=0$ and if  $\overline{h}^{\prime}_{\alpha,k}<0$ in $(x_i,x_{i+1})$ then $u^\prime(x_{i})=0$ for all $i=1,...,m-1$
\item Let $\alpha<2$, if $\overline{h}^{\prime}_{\alpha,k}>0$ in $(x_i,x_{i+1})$ then $u^\prime(x_{i})=0$ and if  $\overline{h}^{\prime}_{\alpha,k}<0$ in $(x_i,x_{i+1})$ then $u^\prime(x_{i+1})=0$ for all $i=1,...,m-1$
\end{enumerate} 
\end{lemma}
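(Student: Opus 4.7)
The plan is to mirror the strategy of Lemma \ref{lOC}, adapted to the Sturm--Liouville problem with weight $h^\alpha$. Write $h = \overline{h}_{\alpha,k}$, $u = u_k$, $\mu = \mu_k(h^\alpha)$, and denote the vertices by $0 = x_0 < x_1 < \cdots < x_m < x_{m+1} = 1$. On each open interval $(x_i, x_{i+1})$, $h$ is affine with slope $a_i$, and $u$ satisfies $-(h^\alpha u')' = \mu h^\alpha u$, equivalently $u'' + \alpha(h'/h)u' + \mu u = 0$.

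First I would transcribe the optimality analysis of Proposition \ref{propoptco} for the functional $\mu_k(h^\alpha)$, whose shape derivative is $\dot{\mu}_\phi = \alpha \int ({u'}^2 - \mu u^2) h^{\alpha-1} \phi\,dx$ by \eqref{dermukd}. As in the case $\alpha = 1$, the concavity constraint produces a non-negative $H^2$ dual function $\xi$ satisfying $-\xi'' = f_\alpha := \alpha({u'}^2 - \mu u^2) h^{\alpha - 1}$ off the support of $h''$ and vanishing, together with its derivative, at every interior vertex. On each interior interval this yields
\[
\int_{x_i}^{x_{i+1}} ({u'}^2 - \mu u^2) h^{\alpha-1} \phi\,dx = 0 \quad \text{for every affine } \phi .
\]
Choosing $\phi = h$ and integrating by parts with the ODE gives $[u u' h^\alpha]_{x_i}^{x_{i+1}} = 0$. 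Starting from $x_0 = 0$ (where $h(0)=0$) and propagating inductively across vertices, together with the fact that $h(x_i) > 0$ for $0 < x_i < 1$, this yields $u(x_i) u'(x_i) = 0$ at every vertex, proving (1).

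For parts (2) and (3), choose $\phi = 1$ in the moment relation to obtain $\int_{x_i}^{x_{i+1}} ({u'}^2 - \mu u^2) h^{\alpha-1} dx = 0$. Integrating by parts once more, using $u'' = -\alpha(h'/h)u' - \mu u$ and the boundary vanishing from (1), this reduces to $\int_{x_i}^{x_{i+1}} u u' h^{\alpha-2} h'\,dx = 0$. Since $h' = a_i$ is a non-zero constant, one has $\int_{x_i}^{x_{i+1}} u u' h^{\alpha-2}\,dx = 0$, and a further integration by parts produces the key identity
\[
\bigl[u^2 h^{\alpha - 2}\bigr]_{x_i}^{x_{i+1}} = (\alpha - 2)\, a_i \int_{x_i}^{x_{i+1}} u^2 h^{\alpha - 3}\,dx .
\]
Because the interval is strictly interior ($0 < x_i < x_{i+1} < 1$), $h$ is uniformly positive on its closure, so the right-hand integral is finite and strictly positive. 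Direct sign inspection then finishes the argument: for $\alpha > 2$ on an increasing interval ($a_i > 0$), the right-hand side is positive, forcing $u(x_{i+1})^2 h(x_{i+1})^{\alpha-2} > u(x_i)^2 h(x_i)^{\alpha-2}$, whence $u(x_{i+1}) \neq 0$ and therefore $u'(x_{i+1}) = 0$ by (1); the three remaining sign combinations (increasing with $\alpha<2$, decreasing with $\alpha>2$ or $\alpha<2$) are completely symmetric.

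The main obstacle lies in the optimality step itself: Proposition \ref{propoptco} cannot be quoted verbatim because $\overline{h}_{\alpha,k}$ was obtained in Lemma \ref{lfirstcar} only as a limit of maximizers of the regularized problems on $\mathcal{L}_\epsilon$. One should therefore either establish the dual function $\xi_\epsilon$ at the level of each approximation and pass to the limit, using the continuity provided by Lemmas \ref{lDEE} and \ref{lemcsl} together with the explicit Bessel representation of the eigenfunctions given in Lemma \ref{lfirstcar}, or else exploit the piecewise-affine structure directly, writing the optimality conditions as a finite-dimensional system obtained by perturbing the heights $h(x_i)$ and the vertex positions while respecting concavity and the bound constraints, and checking that each admissible variation reproduces the two moment conditions above.
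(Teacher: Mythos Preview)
Your argument is correct and follows essentially the same route as the paper: derive the two moment relations $\int_{x_i}^{x_{i+1}} ({u'}^2-\mu u^2)h^{\alpha-1}\phi\,dx=0$ for affine $\phi$, take $\phi=h$ to get $[uu'h^\alpha]_{x_i}^{x_{i+1}}=0$ and propagate from the endpoint, then take $\phi=1$ and integrate by parts twice (using the ODE and part~(1)) to reach the identity $[u^2 h^{\alpha-2}]_{x_i}^{x_{i+1}}=(\alpha-2)a_i\int u^2 h^{\alpha-3}$, whose sign decides which endpoint has $u\neq 0$.

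Regarding the ``main obstacle'' you raise: the paper takes precisely your second option. Once Lemma~\ref{lfirstcar} has established that $\overline{h}_{\alpha,k}$ is piecewise affine and that eigenfunctions exist for \emph{any} piecewise-affine concave function, the admissible perturbations (moving vertex heights or positions) keep $h+t\phi$ piecewise affine, so $t\mapsto\mu_k((h+t\phi)^\alpha)$ is well defined with eigenfunctions for all small $t$, and the derivative formula \eqref{dermukd} and the abstract Proposition~\ref{propoptco} apply directly to the maximizer itself---no passage to the limit through $\mathcal{L}_\epsilon$ is needed at this stage. So your worry is legitimate but already resolved by the structure obtained in Lemma~\ref{lfirstcar}.
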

\begin{proof}
The first point of the proof is a straightforward adaptation of Lemma \ref{lOC}. Indeed in this case we have that $f_{\alpha}=\alpha({u'}^2 - \mu(h^{\alpha}) u^2)h^{\alpha-1}$ thanks to the optimality conditions we will obtain, recalling that $h$ is piece-wise affine  
\begin{equation*}
0=\int_{x_i}^{x_{i+1}}f_{\alpha}h=\int_{x_i}^{x_{i+1}} ({u^\prime}^2 - \mu_k u^2) h^{\alpha} dx=-h^{\alpha}(x_{i+1})u(x_{i+1})u^\prime(x_{i+1})+h^{\alpha}(x_i)u(x_i)u^\prime(x_i)
\end{equation*}
then the conclusion follows directly from the same argument as in Lemma \ref{lOC}.

For the second part of the Lemma, the idea of the proof is the same of Lemma \ref{lOC}, but the computations are different. from the optimality conditions we know that $\int_{x_i}^{x_{i+1}}({u^\prime}^2 - \mu_k u^2) h^{\alpha-1}=0$ and $u$ solves the equation (recall that we are in the region where $h>0$) $u^{\prime\prime}=-\alpha\frac{h^\prime}{h} u^\prime -\mu u$ we obtain:
\begin{equation*}
0=\int_{x_i}^{x_{i+1}}({u^\prime}^2 - \mu_k u^2) h^{\alpha-1}=-\int_{x_i}^{x_{i+1}}u[-\alpha u'h'h^{\alpha-2}+(\alpha-1)h^{\alpha-2}u'].
\end{equation*}
recalling that $h'$ is constant on $[x_i,x_{i+1}]$ we get that $\int_{x_i}^{x_{i+1}}uu'h^{\alpha-2}=0$, integrating by parts we obtain:
\begin{equation*}
\frac{1}{2}u^2(x_{i+1})h^{\alpha-2}(x_{i+1})-\frac{1}{2}u^2(x_{i})h^{\alpha-2}(x_{i})=\frac{1}{2}(\alpha-2)\int_{x_i}^{x_{i+1}}u^2h'h^{\alpha-2}>0.
\end{equation*}
For $\alpha>2$ this implies $u(x_{i+1})\neq 0$ that, thanks to the first part if the Lemma we conclude $u'(x_{i+1})=0$. For $\alpha<2$ we conclude that $u'(x_{i})=0$. In the region where $\overline{h}^{\prime}_k<0$ we can do the same computations obtaining  $u'(x_{i})=0$ for $\alpha>2$ and $u'(x_{i+1})=0$ for $\alpha<2$
\end{proof}
\begin{lemma}\label{lOC2k}
Let $\overline{h}_{\alpha,k}$ be a maximizer for the problem $\sup \{\mu_k(h^{\alpha}), h\in \mathcal{L}\}$ let $u_k$ be the eigenfunction associated to $\mu_k(\overline{h}_k^{\alpha})$ and let $\{x_0,x_1,\ldots,x_m,x_{m+1}\}=suppt(\overline{h}_{\alpha,k}^{\prime\prime})$ then $u_k(x_i)=0$ for all $i=1,...,m$.
\end{lemma}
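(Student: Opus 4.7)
The plan is to mirror the $\alpha=1$ strategy of Lemma \ref{lOC2}, now combining the derivative identity \eqref{dermukd} with point (1) of Lemma \ref{lOCk}, while making sure the admissible perturbations keep us in a subclass of densities for which eigenfunctions exist and the derivative formula is rigorously valid.

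First I would argue by contradiction and suppose $u_k(x_i)\neq 0$ for some interior vertex $x_i$ with $1\leq i\leq m$. Point (1) of Lemma \ref{lOCk} then forces $u_k^\prime(x_i)=0$. Since $\overline{h}_{\alpha,k}$ is affine and strictly positive in a neighborhood of $x_i$ (it is concave with $\overline{h}_{\alpha,k}(0)=\overline{h}_{\alpha,k}(1)=0$, hence positive on the interior), the Sturm--Liouville equation is regular there and both $u_k$ and $u_k^\prime$ are continuous near $x_i$. By continuity, there is a closed interval $[a_i,b_i]\subset(x_{i-1},x_{i+1})$ containing $x_i$ on which
\begin{equation*}
(u_k^\prime)^2-\mu_k(\overline{h}_{\alpha,k}^\alpha)\,u_k^2<0.
\end{equation*}

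Next I would exhibit a non-positive concave piecewise affine perturbation $\phi$, for instance a small downward tent, supported in $[a_i,b_i]$. For small $t>0$ the function $\overline{h}_{\alpha,k}+t\phi$ is concave as a sum of concave functions, remains bounded above by $1$ since $\phi\leq 0$, and is bounded below by a positive constant on $[a_i,b_i]$ because $\overline{h}_{\alpha,k}$ is; so $\overline{h}_{\alpha,k}+t\phi$ belongs to the subclass $\mathcal{L}_\epsilon$ used in the proof of Lemma \ref{lfirstcar}. In this class the Sturm--Liouville eigenvalue admits a genuine eigenfunction and the derivative formula \eqref{dermukd} applies along the direction $\phi$. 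Evaluating it gives
\begin{equation*}
\dot{\mu}_\phi(\overline{h}_{\alpha,k}^\alpha)=\alpha\int_{a_i}^{b_i}\left((u_k^\prime)^2-\mu_k u_k^2\right)\overline{h}_{\alpha,k}^{\alpha-1}\phi\,dx>0,
\end{equation*}
because the integrand is the product of a strictly negative factor, the strictly positive factor $\overline{h}_{\alpha,k}^{\alpha-1}$, and the non-positive factor $\phi$, and is strictly positive on a set of positive measure. This would contradict the maximality of $\overline{h}_{\alpha,k}$, yielding the claim.

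The step I expect to be the most delicate is precisely the one that invalidates the naive repetition of the $\alpha=1$ argument: securing the validity of the derivative formula along the perturbation. By restricting to perturbations compactly supported in an open subinterval on which $\overline{h}_{\alpha,k}$ is bounded away from zero, one stays inside the regular class $\mathcal{L}_\epsilon$ where the associated Green kernel is square-integrable and the standard Sturm--Liouville theory applies, so that the derivation in Lemma \ref{ldermuk} extended by \eqref{dermukd} can be carried out rigorously; any attempt to perturb near the endpoints of $[0,1]$ or in a region where $\overline{h}_{\alpha,k}$ vanishes would face this obstruction and cannot be used to conclude.
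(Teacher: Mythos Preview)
Your overall strategy mirrors the paper's exactly: assume $u_k(x_i)\neq 0$, use Lemma~\ref{lOCk}(1) to get $u_k'(x_i)=0$, isolate a neighborhood $[a_i,b_i]$ on which $(u_k')^2-\mu_k u_k^2<0$, apply a downward tent perturbation $\phi$ supported there, and read off a contradiction from the derivative formula \eqref{dermukd}. That part is fine.

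The gap is in your justification that eigenfunctions exist along the perturbation. You assert that $\overline{h}_{\alpha,k}+t\phi\in\mathcal{L}_\epsilon$ because it is bounded below on $[a_i,b_i]$. But $\mathcal{L}_\epsilon=\{h\in\mathcal{L}:h\geq\epsilon\}$ requires the lower bound on all of $[0,1]$, and since $\overline{h}_{\alpha,k}(0)=\overline{h}_{\alpha,k}(1)=0$ while $\phi$ is supported in the interior, the perturbed function still vanishes at the endpoints and belongs to no $\mathcal{L}_\epsilon$. Your Green--kernel reasoning therefore fails globally: near $x=0$ the density $(\overline{h}_{\alpha,k}+t\phi)^\alpha$ behaves like $x^\alpha$, which for $\alpha\geq 2$ does not satisfy the hypothesis of Remark~\ref{rmkwelldefined}.

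The paper's fix is different and simpler: since $\overline{h}_{\alpha,k}$ is piecewise affine (Lemma~\ref{lfirstcar}) and $\phi$ is a tent, the perturbed function $\overline{h}_{\alpha,k}+t\phi$ is again a concave piecewise affine function with finitely many pieces, vanishing only at $0$ and $1$. For such densities the explicit Bessel-function construction carried out in the proof of Lemma~\ref{lfirstcar} yields genuine eigenfunctions (solve the ODE on each affine piece, match at the vertices), so \eqref{dermukd} is rigorously available along this family. Replace your $\mathcal{L}_\epsilon$ claim by this piecewise-affine observation and the argument is complete.
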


\begin{proof}
We start by noticing that, as a byproduct in the proof of Lemma \ref{lfirstcar}, we also show that if $h\in \mathcal{L}$ is a piece-wise affine function with a finite number of affine part, then there exists an eigenfunction associated to the eigenvalue $\mu_k(h^{\alpha})$. 
Thanks to this, and from the fact that for all perturbations $\phi$ that we take into account for all $t$ small enough the function 
$h+t\phi$ is is a piece-wise affine function with a finite number of affine part, we can easily adapt the proof of the Lemma \ref{lOC2} (we can use the same perturbation function $\phi$).  
\end{proof}
We now give a bound on the number of the segments composing the graph of $\overline{h}_{\alpha,k}$ when $\alpha\neq2$.
\begin{lemma}\label{lbound3}
Let $\alpha\neq2$ and let $\overline{h}_{\alpha,k}$ be a maximizer for the problem $\sup \{\mu_k(h^{\alpha}), h\in \mathcal{L}\}$ then $\overline{h}_{\alpha,k}$ has a graph that is a polygonal line composed of (at most) $3$ segments. 
\end{lemma}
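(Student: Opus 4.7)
The plan is to argue by contradiction, combining the two vertex lemmas \ref{lOCk} and \ref{lOC2k} exactly as was done in the proof of Theorem~\ref{tEFHO}, but now taking care of the asymmetry between $\alpha>2$ and $\alpha<2$.

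By Lemma~\ref{lfirstcar}, $\overline{h}_{\alpha,k}$ is a non--negative, concave, piecewise affine function on $[0,1]$ with $\overline{h}_{\alpha,k}(0)=\overline{h}_{\alpha,k}(1)=0$ and at most $k+1$ affine pieces. Write its graph as a polygonal line with vertices $0=x_0<x_1<\cdots<x_m<x_{m+1}=1$, possibly with one interval of constancy $\{x_j\leq x\leq x_{j+1}\}$ where $\overline{h}_{\alpha,k}\equiv 1$. I suppose, for a contradiction, that $m+1\geq 4$, i.e. that the graph contains at least four segments. Since the function is concave with a (possibly empty) central plateau, strictly increasing before it and strictly decreasing after, having four or more segments forces the existence of a segment $(x_i,x_{i+1})$ which is neither a boundary segment ($i\neq 0$ and $i+1\neq m+1$) nor the central plateau; up to a symmetry we may assume that $\overline{h}_{\alpha,k}$ is strictly increasing on this segment.

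On this interval, Lemma~\ref{lOC2k} yields $u_k(x_i)=u_k(x_{i+1})=0$. Now I invoke Lemma~\ref{lOCk}: when $\alpha>2$ and $\overline{h}_{\alpha,k}'>0$ on $(x_i,x_{i+1})$, part (2) gives $u_k'(x_{i+1})=0$; when $\alpha<2$, part (3) gives instead $u_k'(x_i)=0$. In either case, there exists a point $x^*\in\{x_i,x_{i+1}\}\subset(0,1)$ at which both $u_k(x^*)=0$ and $u_k'(x^*)=0$. Since $x^*$ is an interior vertex, $\overline{h}_{\alpha,k}(x^*)>0$, so in a neighborhood of $x^*$ the eigenfunction $u_k$ solves the regular linear second--order ODE
\begin{equation*}
u_k''+\alpha\,\frac{\overline{h}_{\alpha,k}'}{\overline{h}_{\alpha,k}}\,u_k'+\mu_k u_k=0,
\end{equation*}
with Lipschitz coefficients on each side of $x^*$. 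Uniqueness of the Cauchy problem with data $u_k(x^*)=u_k'(x^*)=0$ then forces $u_k\equiv 0$ on a full neighborhood of $x^*$, and then on all of $[0,1]$ by propagation through the remaining vertices, contradicting the fact that $u_k$ is a non--trivial Sturm--Liouville eigenfunction.

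The symmetric case (strictly decreasing segment) is handled identically, simply exchanging the roles of $x_i$ and $x_{i+1}$ in the application of Lemma~\ref{lOCk}. Hence $m+1\leq 3$, which is the desired bound. The only delicate point is the initial structural observation: one needs the concavity of $\overline{h}_{\alpha,k}$ together with the vanishing at both endpoints to guarantee that among four segments at least one is strictly monotone and has both endpoints strictly inside $(0,1)$; this is where the hypothesis $\alpha\neq 2$ enters indirectly, since Lemma~\ref{lOCk}(2)--(3), which provides the forced vanishing of $u_k'$ at an endpoint, requires precisely $\alpha\neq 2$.
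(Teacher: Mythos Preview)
Your proof is correct and follows the same approach as the paper's own argument: combine Lemma~\ref{lOC2k} (vanishing of $u_k$ at interior vertices) with parts (2)--(3) of Lemma~\ref{lOCk} (vanishing of $u_k'$ at one endpoint of an interior monotone segment) to force $u_k=u_k'=0$ at an interior point, contradicting the uniqueness for the Cauchy problem. The paper's proof is a one--line reference to these two lemmas; you have simply spelled out the structural observation (four segments guarantee an interior monotone segment) and the ODE uniqueness step in more detail, exactly as was done in the proof of Theorem~\ref{tEFHO} for $\alpha=1$.
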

\begin{proof}
We have  $\alpha\neq2$ then the results follow directly from the second point of Lemma \ref{lOCk} and from Lemma \ref{lOC2k}, knowing that it is not possible that there exists a point $x\in (0,1)$ for which $u'(x)=u(x)=0$. 
\end{proof}
The last ingredient  in order to prove Theorem \ref{thmax} is the following Lemma 
\begin{lemma}\label{lzerobessel}
Let $j_{\nu,n}$ be the $n-$th zero of the Bessel function $J_\nu$ if $|\nu|\geq \frac{1}{2}$ then 
\begin{align*}
j_{\nu,n+1}-j_{\nu,n}&\geq \pi\\
j_{\nu,n+1}-j_{\nu,n}&\geq j_{\nu,n+2}-j_{\nu,n+1}.
\end{align*}
the inequalities are reversed if $|\nu|< \frac{1}{2}$.
\end{lemma}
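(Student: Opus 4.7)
The natural approach is via the Liouville normal form of Bessel's equation. Setting $u(x) := \sqrt{x}\, J_\nu(x)$, a direct substitution into Bessel's equation shows that
\begin{equation*}
u''(x) + q_\nu(x)\, u(x) = 0, \qquad q_\nu(x) := 1 + \frac{1/4 - \nu^2}{x^2}, \quad x > 0,
\end{equation*}
and the positive zeros of $u$ and of $J_\nu$ coincide. The sign and monotonicity of $q_\nu - 1$ trichotomize the problem: for $|\nu| > 1/2$ one has $q_\nu < 1$ and $q_\nu$ strictly increasing on $(0,\infty)$; for $|\nu|<1/2$ one has $q_\nu > 1$ and $q_\nu$ strictly decreasing; and the borderline $|\nu|=1/2$ gives $q_\nu \equiv 1$, for which $J_{1/2}(x) = \sqrt{2/(\pi x)}\sin x$ makes both inequalities degenerate to equalities.

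For the first inequality I would compare $u$ with solutions of the constant-coefficient equation $w'' + w = 0$. By the Sturm comparison theorem, the solution of the equation with the larger potential oscillates faster; so when $|\nu|>1/2$, applying this to $w(x) := \sin(x - j_{\nu,n})$ forces the next zero of $w$ at $j_{\nu,n}+\pi$ to lie strictly before the next zero $j_{\nu,n+1}$ of $u$, hence $j_{\nu,n+1} - j_{\nu,n} > \pi$. The case $|\nu|<1/2$ is symmetric and yields the reversed strict inequality, and the case $|\nu|=1/2$ gives equality.

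For the concavity of the sequence of zeros I would carry out a shift-comparison argument. Let $T := j_{\nu,n+1} - j_{\nu,n}$ and set $\tilde u(x) := u(x+T)$, which solves $\tilde u'' + q_\nu(x+T)\,\tilde u = 0$. The positive zeros of $\tilde u$ are exactly $\{j_{\nu,m} - T\}$; in particular $\tilde u$ vanishes at $j_{\nu,n}$ (since $j_{\nu,n+1}-T = j_{\nu,n}$), and the first zero of $\tilde u$ strictly after $j_{\nu,n}$ is $j_{\nu,n+2} - T = j_{\nu,n} + (j_{\nu,n+2} - j_{\nu,n+1})$. When $|\nu|>1/2$ the strict monotonicity of $q_\nu$ gives $q_\nu(x+T) > q_\nu(x)$, so Sturm comparison (non-degenerate since $q_\nu(\cdot+T) \not\equiv q_\nu$) forces a zero of $\tilde u$ strictly inside $(j_{\nu,n}, j_{\nu,n+1})$; this zero must then be $j_{\nu,n+2} - T$, producing $j_{\nu,n+2} - T < j_{\nu,n+1}$, i.e.\ $j_{\nu,n+2} - j_{\nu,n+1} < T$. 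For $|\nu|<1/2$ the monotonicity reverses, so $\tilde u$ now oscillates more slowly than $u$: applying Sturm in the other direction, $u$ must have a zero between the two consecutive zeros $j_{\nu,n}$ and $j_{\nu,n+2}-T$ of $\tilde u$, forcing $j_{\nu,n+1} < j_{\nu,n+2} - T$, i.e.\ the reversed inequality.

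The only delicate point is bookkeeping: one must correctly track the direction of the Sturm comparison as a function of the sign of $1/4 - \nu^2$, and handle the borderline case $|\nu|=1/2$ so that the strict inequalities collapse cleanly to the non-strict $\geq$ / $\leq$ statement of the lemma. Otherwise the argument is a standard double application of Sturm's comparison theorem in the spirit of the classical monotonicity results for Bessel zeros (cf.\ Watson's treatise).
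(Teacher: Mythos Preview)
Your argument is correct: the Liouville normal form $u''+(1+(1/4-\nu^2)/x^2)u=0$ together with two applications of the Sturm comparison theorem (once against the constant potential $1$, once against the shifted potential $q_\nu(\cdot+T)$) yields both the bound $|j_{\nu,n+1}-j_{\nu,n}-\pi|$ having the right sign and the monotonicity of the gap sequence, with the trichotomy on $|\nu|$ versus $1/2$ coming from the sign and monotonicity of $1/4-\nu^2$.

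The paper itself does not prove this lemma at all; it simply refers to Theorem~1.6 in \cite{S01} and Lemma~2.9 in \cite{LN07}. Your self-contained Sturm-comparison argument is exactly the classical route underlying those references, so in substance you have recovered the standard proof rather than taken a genuinely different path. The only cosmetic remark is that you obtain \emph{strict} inequalities for $|\nu|\neq 1/2$, which is in fact sharper than the non-strict statement quoted in the paper; the equality case $|\nu|=1/2$ is handled by your explicit observation that $J_{1/2}(x)=\sqrt{2/(\pi x)}\sin x$.
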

\begin{proof}
See Theorem $1.6$ in \cite{S01} and Lemma $2.9$ in \cite{LN07}
\end{proof} 
We are now ready to prove Theorem \ref{thmax}
\begin{proof}[Proof of Theorem \ref{thmax}] We start with the case $\alpha\neq2$.
Thanks to Lemma  \ref{lbound3} we only need to study the case where the maximizer $h$ is composed of $2$ or $3$ segments. The idea 
of the proof is exactly the same as the proof of Theorem \ref{tEFHO}, we will use also the same notation.
The general form of the eigenfunction $u$ is the following 
\begin{equation*}
u=\begin{cases}
x^{\frac{1-\alpha}{2}}A_1J_{\frac{\alpha-1}{2}}(wx) \quad &x\in [0,x_a] \\
B_1\cos(wx)+B_2\sin(wx) \quad &x\in [x_a,x_b]\\
(1-x)^{\frac{1-\alpha}{2}}C_1J_{\frac{\alpha-1}{2}}((w(1-x)) \quad &x\in [x_b,1].
\end{cases}
\end{equation*}
In order to find an implicit formula for $w$ we proceed in a classical way, we know  that the eigenfunction is $C^1$, we write the compatibility condition
and this gives a homogeneous linear system that the constants $A_1$, $B_1$, $B_2$ and $C_1$ must satisfy. The parameters $w$ are those 
for which the homogeneous linear system has a solution, so are the parameters for which the determinant of the homogeneous linear system is zero.  In order to simplify the notation we introduce the function 
\begin{equation*}
p(x)=\frac{d}{dx}\big (x^{\frac{1-\alpha}{2}}J_{\frac{\alpha-1}{2}}(wx) \big),
\end{equation*} 
we notice that $g(a)\neq 0$ and $g(b)\neq 0$. A straightforward computation yields
\begin{equation*}
\tan(w(1-a-b)))=\frac{-p(a)b^{\frac{1-\alpha}{2}}J_{\frac{\alpha-1}{2}}(wb)+p(b)a^{\frac{1-\alpha}{2}}J_{\frac{\alpha-1}{2}}(wa)}{-p(a)p(b)+a^{\frac{1-\alpha}{2}}J_{\frac{\alpha-1}{2}}(wa)b^{\frac{1-\alpha}{2}}J_{\frac{\alpha-1}{2}}(wb)}.
\end{equation*}
From Lemma \ref{lOC2k} we conclude that 
\begin{align}\label{ewa}
wa&=j_{\frac{\alpha-1}{2},m_1}\quad  m_1\in \mathbb{N}\\ \label{ewb}
wb&=j_{\frac{\alpha-1}{2},m_2}\quad  m_2\in \mathbb{N}, 
\end{align}
where $j_{\frac{\alpha-1}{2},m}$ is the $m$-th zero of the Bessel function $J_{\frac{\alpha-1}{2}}$. In order to analyze the plateau we proceed as in the proof of Theorem \ref{tEFHO} and we conclude that:
\begin{equation}\label{econdw}
w=j_{\frac{\alpha-1}{2},m_1}+j_{\frac{\alpha-1}{2},m_2}+(k+1-m_1-m_2)\pi.
\end{equation}
with the condition $k+1\geq m_1+m_2$. Suppose now $k=1$, therefore $m_1=m_2=1$ and in this case  conclude that:
\begin{equation*}
w=2j_{\frac{\alpha -1}{2}}.
\end{equation*}
From \eqref{econdw} and from Lemma \ref{lzerobessel} we obtain the following situation:
\begin{itemize}
\item if $\alpha<2$ and $k\geq 2$ we have that $w=2j_{\frac{\alpha -1}{2}}+(k-1)\pi$.
\item if $\alpha>2$ and $k$ is odd we have that $w=2j_{\frac{\alpha-1}{2},\frac{k+1}{2}}$.
\item if $\alpha>2$ and $k$ is even we have that $w=j_{\frac{\alpha-1}{2},\frac{k}{2}}+j_{\frac{\alpha-1}{2},\frac{k+2}{2}}$.
\end{itemize}
The shape of the maximizer now follows from the above expressions for $w$ and from \eqref{ewa} and \eqref{ewb}.  

It remains to study the case $\alpha=2$. From Lemma \ref{lfirstcar} we know that $\overline{h}_{2,k}$ has the graph composed at most of $k+1$ segments. In this case we have that a general solution for the equation 
\begin{equation*}
-\frac{d}{dx}\big((ax+b)^{2}\frac{du}{dx}(x)\big)=w^2 (ax+b)^{2}u(x)
\end{equation*}
is given by 
\begin{equation*}
u(x)=\frac{1}{x+\frac{b}{a}}\big [A\sin\big (w(x+\frac{b}{a})\big)+B\cos\big (w(x+\frac{b}{a})\big)\big ].
\end{equation*}
From the above equation is clear that the distance between two consecutive zeros of the eigenfunction $u_k$ is constantly equal to $\pi/w$.
Let $\{0,x_1,\ldots,x_m,1\}=suppt(\overline{h}_{\alpha,k}^{\prime\prime})$ then, from Lemma \ref{lOC2k} we know that $u(x_i)=0$. 
We conclude that $wx_i=k_i\pi$ for all $i=1,\ldots,m$ and $w*1=k_{m+1} \pi$., We also know that $u$ must have $k+1$ nodal intervals, this implies that $k_{m+1}=k+1$. 
We finally obtain $w=(k+1)\pi$. From the above considerations we also conclude that the maximizer $\overline{h}_{2,k}$ is given by any function that has the graph composed at most of $k+1$ segments on a subdivision
$\{0,x_1,\ldots,x_m,1\}$ with $x_i=k_i/(k+1)$ and such that $\overline{h}_{2,k}(0)=\overline{h}_{2,k}(x_i)=\overline{h}_{2,k}(1)=0$.
\end{proof}
\section{Sharp upper bounds for $D(\Omega)^2\mu_k(\Omega)$}\label{section5}
In this section we want to state Theorem \ref{tmain} in a more precise way, giving a precise description of the collapsing sequence of domains for which the inequality is saturated. The proof of this result will directly follow from Lemma \ref{lLBMU}, Proposition \ref{equalsup} and from Theorem \ref{thmax}
\begin{theorem}\label{tmain2}
Let $\Omega\subset \mathbb{R}^d$ be a domain, let $g$ be the profile function associated to $\Omega$. If the function $g$ is a optimal $\frac{1}{\alpha}$-concave function with $\alpha\geq 1$, then the following bounds hold:
\begin{outline}
\1 let $\alpha<2$ then: $D(\Omega)^2\mu_k(\Omega)\leq (2j_{\frac{\alpha -1}{2}}+(k-1)\pi)^2$ 

\1 let $\alpha=2$ then: $D(\Omega)^2\mu_k(\Omega)\leq ((k+1)\pi)^2$
\1 let $\alpha>2$ then:
\2 if $k$ is odd then $D(\Omega)^2\mu_k(\Omega)\leq 4j_{\frac{\alpha-1}{2},\frac{k+1}{2}}^2$ 
\2 if $k$ is even then $D(\Omega)^2\mu_k(\Omega)\leq (j_{\frac{\alpha-1}{2},\frac{k}{2}}+j_{\frac{\alpha-1}{2},\frac{k+2}{2}})^2$ 
\end{outline}
where $j_{\nu,m}$ is the $m-$th zero of the Bessel function $J_{\nu}$. Moreover in all the above cases the equality is achieved in the limit by a sequence of collapsing domains $\Omega_{\epsilon, \overline{h}_{\alpha,k}}$ with profile function given by $\epsilon^{d-1}\overline{h}_{\alpha,k}$, where the functions $\overline{h}_{\alpha,k}$ are defined in Theorem \ref{thmax}.
\end{theorem}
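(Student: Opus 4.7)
The plan is to combine the three ingredients already established: the upper bound relating Neumann eigenvalues to the relaxed Sturm--Liouville eigenvalue of the profile (Lemma \ref{lUBMU}), the sharp characterization of maximizers in the Sturm--Liouville setting (Theorem \ref{thmax}), and the convergence for collapsing domains (Lemma \ref{lLBMU}). The upper bound part of Theorem \ref{tmain2} is then essentially a translation, and the sharpness part is an explicit construction.

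For the upper bound, I would start from Lemma \ref{lUBMU}, which gives $D(\Omega)^2\mu_k(\Omega)\leq \mu_k(g)$ where $g$ is the profile function of $\Omega$. By hypothesis $g$ is $\frac{1}{\alpha}$-concave, so writing $h:=g^{1/\alpha}$ yields a non-negative concave function on $[0,D(\Omega)]$ with $g=h^{\alpha}$. Since $\mu_k$ is invariant under positive scaling of its argument and under affine rescaling of the interval (up to the factor $D(\Omega)^2$ already accounted for), we may rescale so that $h$ has domain $[0,1]$ and $\max h=1$, i.e.\ $h\in \mathcal{L}$. Then $\mu_k(g)=\mu_k(h^{\alpha})\leq \sup\{\mu_k(\tilde h^{\alpha}), \tilde h\in \mathcal{L}\}$, and Theorem \ref{thmax} identifies this supremum with the explicit quantities
\begin{equation*}
(2j_{\frac{\alpha-1}{2}}+(k-1)\pi)^2,\quad ((k+1)\pi)^2,\quad 4j_{\frac{\alpha-1}{2},\frac{k+1}{2}}^2,\quad (j_{\frac{\alpha-1}{2},\frac{k}{2}}+j_{\frac{\alpha-1}{2},\frac{k+2}{2}})^2,
\end{equation*}
according to whether $\alpha<2$, $\alpha=2$, $\alpha>2$ with $k$ odd, or $\alpha>2$ with $k$ even. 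This yields the claimed inequalities.

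For the sharpness statement, I would let $\overline{h}_{\alpha,k}\in \mathcal{L}$ be the maximizer provided by Theorem \ref{thmax}, so that $\mu_k(\overline{h}_{\alpha,k}^{\alpha})$ equals the right-hand side of the relevant bound. I consider the sequence $\Omega_{\epsilon,\overline{h}_{\alpha,k}}$ of domains in $\mathbb{R}^d$ whose profile function is $\epsilon^{d-1}\overline{h}_{\alpha,k}$; such domains exist since $\overline{h}_{\alpha,k}^{1/\alpha}=\overline{h}_{\alpha,k}$ is concave (hence the rescaled profile is a valid profile, for instance by taking the associated body of revolution or any concrete realization compatible with $\mathcal{D}_{\alpha}^{d}$). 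Applying Lemma \ref{lLBMU} to the function $\overline{h}_{\alpha,k}^{\alpha}$ gives
\begin{equation*}
\lim_{\epsilon\to 0}D(\Omega_{\epsilon,\overline{h}_{\alpha,k}})^2\mu_k(\Omega_{\epsilon,\overline{h}_{\alpha,k}})=\mu_k(\overline{h}_{\alpha,k}^{\alpha}),
\end{equation*}
showing that the upper bound is attained in the limit.

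There is essentially no major technical obstacle since the heavy lifting has been done in the preceding sections; the only point to watch is the normalization step where one passes from an arbitrary $\frac{1}{\alpha}$-concave profile on $[0,D(\Omega)]$ to the class $\mathcal{L}$, which relies crucially on the scale invariance of $\mu_k(h)$ under $h\mapsto th$ (noted just after Definition \ref{dMUk}) and on the trivial scaling $D(\Omega)^{2}\mu_k(\Omega)$ of both sides of the inequality. Once this normalization is in place, Theorem \ref{thmax} and Lemma \ref{lLBMU} plug in directly, together establishing both the bound and its optimality via the collapsing sequence $\Omega_{\epsilon,\overline{h}_{\alpha,k}}$.
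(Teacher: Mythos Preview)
Your approach is correct and essentially identical to the paper's: reduce to the Sturm--Liouville problem via Lemma \ref{lUBMU} (the paper phrases this through Proposition \ref{equalsup}), apply Theorem \ref{thmax} for the explicit value of the supremum, and invoke Lemma \ref{lLBMU} for sharpness. One notational slip: the equation $\overline{h}_{\alpha,k}^{1/\alpha}=\overline{h}_{\alpha,k}$ should read $(\overline{h}_{\alpha,k}^{\alpha})^{1/\alpha}=\overline{h}_{\alpha,k}$, and correspondingly the collapsing domains must have profile $\epsilon^{d-1}\overline{h}_{\alpha,k}^{\alpha}$ (which is indeed what you feed into Lemma \ref{lLBMU}) so that their profile is $\tfrac{1}{\alpha}$-concave and they lie in $\mathcal{D}_{\alpha}^{d}$.
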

\begin{proof}
From Proposition \ref{equalsup} is it clear that:
\begin{equation*}
\sup\{D(\Omega)^2\mu_k(\Omega), \Omega\subset \mathbb{R}^d \mbox{ with profile function $h$ } \frac{1}{\alpha} \mbox{-concave} \}= \sup \{\mu_k(h^{\alpha}), h\in \mathcal{L}\}.
\end{equation*}
The proof is now a direct consequence of Theorem \ref{thmax}. Moreover from Lemma \ref{lLBMU} we conclude that the equality is achieved in the limit by a sequence of collapsing domains $\Omega_{\epsilon, \overline{h}_{\alpha,k}}$ with profile function given by $\epsilon^{d-1}\overline{h}_{\alpha,k}$, where the functions $\overline{h}_{\alpha,k}$ are defined in Theorem \ref{thmax}.
\end{proof}
As a particular case of Theorem \ref{tmain2} we obtain sharp upper bounds for $D(\Omega)^2\mu_k(\Omega)$ in the convex setting. Indeed,
 let $\Omega\subset\mathbb{R}^d$ be a convex domain, then, by Brunn-Minkowski inequality, its
  profile function $g$ is $\frac{1}{d-1}-$concave function. We obtain in this way a new proof of the Kr\"oger inequalities \cite{K99} 

\section{Examples of unboundedness of $D(\Omega)^2\mu_1(\Omega)$}\label{section6}
In this Section we want to present some examples of sequences of domains $\Omega_{\epsilon}\subset \mathbb{R}^d$ such that:
\begin{equation}
D(\Omega_{\epsilon})^2\mu_1(\Omega)\rightarrow \infty.
\end{equation}
We first notice that the class of domains we introduced in the Introduction is an optimal class of domains in order to have boundedness 
of the quantity $D(\Omega)^2\mu_1(\Omega)$. Indeed let $M$ be a given arbitrary number, now there exists a number  $\alpha$ such that $M<4j_{\frac{\alpha-1}{2},1}$. From Theorem \ref{tmain2} we conclude that there exists a domain $\Omega$ with a profile function that
is $\frac{1}{\alpha}-$concave and such that $D(\Omega_{\epsilon})^2\mu_k(\Omega)\geq M$ yielding a first example.

Another explicit example of a sequence of domains for which $D(\Omega_{\epsilon})^2\mu_k(\Omega)$ is unbounded is also given in Example 1.3.7 in \cite{HKP16}.
We give now another explicit example of a sequence of domains for which $D(\Omega_{\epsilon})^2\mu_k(\Omega)$ is unbounded.
We first prove the following equality
\begin{equation*}
\sup \{\mu_k(h), h\in L^\infty(0,1), h\geq 0,  h\not= 0 \}=+\infty.
\end{equation*}
For that purpose, we want to construct a particular sequence of functions:
let $0<a<\frac{1}{2}$ and let us denote by $w_a$ the first positive root of the following equation
\begin{equation}\label{equationwa}
\tan\big (x\big (\frac{1}{2} - a\big )\big )=1+\frac{1}{a x}.
\end{equation}
It is straightforward to obtain the following estimates for $w_a$: 
\begin{equation}\label{eULBMU}
\frac{\pi}{4(\frac{1}{2}-a)}< w_a< \frac{\pi}{3(\frac{1}{2}-a)}
\end{equation}
Now we define the following function
\begin{equation*}
g_a(x)=\begin{cases}
e^{2w_a(x-a)}\quad &0\leq x<a\\
1 \quad &a\leq x\leq 1-a \\
e^{2w_a(1-x-a)} \quad &1-a< x\leq 1.
\end{cases}
\end{equation*}
We have 
\begin{proposition}\label{lMUINF}
Let $g_a$ be the function defined above, then $\mu_1(g_a)=w_a$ and therefore
\begin{equation*}
\lim_{a\rightarrow \frac{1}{2}} \mu_1(g_a)=+\infty
\end{equation*}
\end{proposition}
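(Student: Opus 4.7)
The plan is to exhibit explicitly a first eigenfunction of the Sturm--Liouville problem associated to $g_a$ at $\mu=w_a^2$, and then read off the divergence from \eqref{eULBMU}. (The natural reading of the identity is $\mu_1(g_a)=w_a^2$, dictated by the dimensional scaling of the ODE; the divergence conclusion is unchanged.) First I would observe that since $g_a$ is bounded below by the positive constant $e^{-2w_a a}$ on $(0,1)$, Remark \ref{rmkwelldefined} guarantees that $\mu_1(g_a)$ is a genuine eigenvalue admitting a smooth associated eigenfunction $u$. Moreover $g_a$ is symmetric about $x=1/2$, so the eigenspace decomposes into symmetric and antisymmetric sectors; I would look for $u$ antisymmetric, in which case $u(1/2)=0$ and the orthogonality constraint $\int_0^1 u\,g_a\,dx=0$ is automatic.

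Next, I would solve the eigenvalue equation on each of the three pieces where $g_a$ is smooth. On $[a,1-a]$ where $g_a\equiv 1$ the antisymmetric solution is $u(x)=A\sin(\sqrt{\mu}(x-1/2))$. On $[0,a]$ one has $g_a'/g_a=2w_a$, so the equation becomes $u''+2w_a u'+\mu u=0$. The natural ansatz is $\mu=w_a^2$: the characteristic polynomial then has a double root at $-w_a$, giving $u(x)=(B+Cx)e^{-w_a x}$. The Neumann condition $g_a(0)u'(0)=0$ reduces to $u'(0)=0$ since $g_a(0)>0$, forcing $C=w_a B$ and so $u(x)=B(1+w_a x)e^{-w_a x}$ on $[0,a]$. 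Matching $u$ and $g_a u'$ (equivalently $u$ and $u'$, since $g_a$ is continuous at $a$) with the middle piece and eliminating the constants $A,B$ yields exactly the transcendental equation \eqref{equationwa}, which is satisfied by definition of $w_a$. The matching at $x=1-a$ is identical by symmetry, so this construction produces a genuine eigenfunction at eigenvalue $w_a^2$.

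To identify this eigenvalue as $\mu_1$ rather than a higher one, I would count nodal domains. On $[0,a]$, $u=B(1+w_a x)e^{-w_a x}$ is strictly of one sign. On $[a,1-a]$, $u$ is a scalar multiple of $\sin(w_a(x-1/2))$, and the upper bound $w_a(1/2-a)<\pi/3$ from \eqref{eULBMU} shows the only internal zero of this sine on $[a,1-a]$ is at $x=1/2$. On $[1-a,1]$, $u$ is of the opposite sign by antisymmetry. Hence $u$ has exactly two nodal domains, so by the Sturm oscillation theorem it is the first nontrivial eigenfunction, and $\mu_1(g_a)=w_a^2$. The divergence $\mu_1(g_a)\to+\infty$ as $a\to 1/2$ now follows immediately from the lower bound $w_a>\pi/(4(1/2-a))$ recorded in \eqref{eULBMU}.

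The main obstacle is the identification of $w_a^2$ as the \emph{first} eigenvalue rather than a higher one; the nodal-count argument above, which crucially relies on the upper bound for $w_a$ in \eqref{eULBMU} to exclude spurious zeros of the middle sine, is what makes this identification rigorous. One must also justify the ansatz $\mu=w_a^2$; this can be done a posteriori by solving the ODE on $[0,a]$ for a general $\mu=\lambda^2>w_a^2$, writing the matching condition in the form $\lambda\tan(\lambda(1/2-a))=\beta\cot(\beta a)+w_a$ with $\beta=\sqrt{\lambda^2-w_a^2}$, and observing that the limit $\lambda\to w_a$ (so $\beta\to 0$, $\beta\cot(\beta a)\to 1/a$) reduces precisely to \eqref{equationwa}.
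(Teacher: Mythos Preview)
Your proof is correct and the construction of the eigenfunction at $\mu=w_a^2$ is identical to the paper's. The difference lies in how you identify $w_a^2$ as the \emph{first} nontrivial eigenvalue rather than a higher one.

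The paper proceeds by contradiction: assuming a smaller eigenvalue $\mu_a<w_a^2$ exists, it solves the ODE on $(0,a]$ explicitly in the regime of two distinct real characteristic roots, derives the transcendental matching condition
\[
\sqrt{\mu_a}\tan\!\big(\sqrt{\mu_a}\,(\tfrac12-a)\big)=w_a+\frac{\sqrt{w_a^2-\mu_a}}{\tanh\!\big(a\sqrt{w_a^2-\mu_a}\big)},
\]
and shows by a monotonicity argument that this has no root in $(0,w_a)$. Your route instead counts nodal domains: the upper bound $w_a(\tfrac12-a)<\pi/3$ from \eqref{eULBMU} forces the sine on $[a,1-a]$ to vanish only at $x=1/2$, while the exponential pieces never vanish, so the constructed eigenfunction has exactly two nodal intervals and is therefore the first nontrivial eigenfunction by Sturm oscillation. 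This is shorter and more conceptual, and it exploits the upper estimate in \eqref{eULBMU} (which the paper states but does not otherwise use in the proof). The paper's approach, by contrast, is self-contained and does not invoke the oscillation theorem; it also makes transparent exactly where the defining equation \eqref{equationwa} for $w_a$ arises as the borderline of the hyperbolic regime.
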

\begin{proof}
The function $g_a$ is strictly positive, so we can identify the quantity $\mu_1(g_a)$ as the first non trivial eigenvalue of the following eigenvalue problem
\begin{equation}\label{eigenpb}
\begin{cases}
\vspace{0.3cm}
   -\frac{d}{dx}\big(g_a(x)\frac{du}{dx}(x)\big)=\mu(g_a) g_a(x)u(x)  \qquad  x\in \big(0,1\big) \\
     \frac{du}{dx}(0)=\frac{du}{dx}(1)=0,
\end{cases}
\end{equation}
in order to simplify the notation we will define $\mu_a=\mu_1(g_a)$. We divide the interval $(0,\frac{1}{2}]$ in two intervals $(0,a]$ and $[a,\frac{1}{2}]$. We solve the equation in $(0,\frac{1}{2}]$ and, by symmetry of $g_a$, the associated eigenfunction $u$ must be odd with respect to the point $x=\frac{1}{2}$.
\begin{enumerate}
\item In the interval $[a,\frac{1}{2}]$, knowing that $u(\frac{1}{2})=0$, we have that the eigenfunction, in this interval, must be of the form
\begin{equation*}
u_r=\alpha \sin\big (\sqrt{\mu_a}\big (x-\frac{1}{2}\big )\big ),
\end{equation*} 
where $\alpha \in \mathbb{R}$
\item In the region $(0,a]$ we have to solve the differential equation 
\begin{equation}\label{odelin}
\frac{d^2u}{dx^2}+2w_a\frac{du}{dx}+\mu_a u=0.
\end{equation}
We have two possibilities, the first is when $\mu_a\neq w_a^2$ and the second is when $\mu_a=w_a^2$. We now continue the analysis assuming that the second case 
happens, we will exclude the first possibility later in the proof. So in the case when $\mu_a=w_a^2$, knowing that $u'(0)=0$ we obtain that 
\begin{equation*}
u_l(x)=A e^{-\sqrt{\mu_a}x}(1+\sqrt{\mu_a}x),
\end{equation*}   
where $A \in \mathbb{R}$
\end{enumerate}

We impose the conditions on $\alpha$ and $A$ in order to have $u_l(a)=u_r(a)$ and $u_l'(a)=u_r'(a)$, so we need to solve the following system 
\begin{equation*}
\begin{cases}
(1+\sqrt{\mu_a}a)Ae^{-\sqrt{\mu_a}a}-\alpha\sin\big (\sqrt{\mu_a}\big (a-\frac{1}{2}\big )\big )=0 \\
-a\mu_ae^{-\sqrt{\mu_a}a}-\alpha\sqrt{\mu_a}\cos\big (\sqrt{\mu_a}\big (a-\frac{1}{2}\big )\big )=0.
\end{cases}
\end{equation*}
Necessarily the determinant of this linear system is equal to zero, this gives us a transcendental equation, the first non trivial root of this equation will be the first eigenvalue $\mu_a$. The system above has a solution if and only if 
\begin{equation*}
\tan\big (\sqrt{\mu_a}\big (a-\frac{1}{2}\big )\big )=1+\frac{1}{a\sqrt{\mu_a}}.
\end{equation*}
and we recover Equation \eqref{equationwa} confirming that, in the case, $\mu_a=w_a^2$ we have obtained an eigenfunction of Problem \eqref{eigenpb}.

It remains to prove that we cannot find a smaller eigenvalue. Let us assume, for a contradiction, that there exists an eigenvalue $\mu_a<w_a^2$.
Let $\rho_{1,2}=-w_a\pm \sqrt{w_a^2-\mu_a}$ then the solution  of the differential equation \eqref{odelin} in $(0,a]$, knowing that $u'(0)=0$, must be of the form 
\begin{equation*}
u_r(x)=A(\rho_1e^{\rho_2x}-\rho_2e^{\rho_2x}),
\end{equation*}
where $A\in \mathbb{R}$. Imposing the condition that the eigenfunction must be $\mathcal{C}^1$ we obtain the following  transcendental equation that $\mu_a$ must solve 
\begin{equation}\label{eTEMU}
\sqrt{\mu_a}\tan\big (\sqrt{\mu_a}\big (\frac{1}{2} - a\big )\big )=w_a+\frac{\sqrt{w_a^2-\mu_a}}{\tanh(a\sqrt{w_a^2-\mu_a})}. 
\end{equation} 
Let us introduce the function
\begin{equation*}
\phi(x)=x\tan\big (x\big (\frac{1}{2} - a\big )\big )-w_a-\frac{\sqrt{w_a^2-x^2}}{\tanh(a\sqrt{w_a^2-x^2})}<0 ,
\end{equation*}
It is immediate to check that this function is strictly increasing in the interval $(0,w_a)$. Moreover, by definition of $w_a$, we have
$$\lim_{x\to w_a} \phi(x) =0$$
thus, $\phi(x)<0$ for all $x\in (0,w_a)$ showing that there are no eigenvalues in the interval $(0,w_a)$.

We have proved that $\mu_a=w_a^2$ and the last claim of the proposition comes from the estimate \eqref{eULBMU}.
\end{proof}
We now construct the sequence of domains for which $D(\Omega)^2\mu_k(\Omega)$ is unbounded. Let $\Omega_{\epsilon, g_a}$ be a domain with profile function given by $\epsilon^{d-1}g_a$. We fix a constant $M>0$, from Lemma \ref{lMUINF} we know that there exists $a<\frac{1}{2}$ such that 
\begin{equation*}
\mu_1(g_a)>M,
\end{equation*}
with $||g_a||_{L^{\infty}}=1$ and $||g_a||_{L^1}>1-2a$. From Lemma \ref{lLBMU} we know that for $\epsilon$ small enough we have that 
\begin{equation*}
\mu_1(\Omega_{\epsilon, g_a})\geq \mu_1(g_a)+O(1)\geq M.
\end{equation*}
This concludes the proof because $M$ is arbitrary and, by construction, for all $\epsilon$ the sets that we constructed have diameter equal to $1$.
\bigskip\noindent

{\bf Acknowledgements}: 
The authors are grateful to P. Freitas for mentioning us the reference \cite{K99} and to I. Polterovich for pointing out  
the reference \cite{HKP16}. This work was partially supported by the project ANR-18-CE40-0013 SHAPO financed by the French Agence Nationale de la Recherche (ANR).

\bibliographystyle{abbrv}
\bibliography{Refer1}

\end{document}